\newcommand{\Mc}{\mathsf{MC}}   
\newcommand{\Def}{\mathsf{Def}} 
\newcommand{\poly}{{\scriptscriptstyle{\mathrm{poly}}}}    
\newcommand{\Tpoly}{T_\poly}  
\newcommand{\Dpoly}{D_\poly}  
\newcommand{\Tpolyres}{\mathcal{T}_\poly}  
\newcommand{\Dpolyres}{\mathcal{D}_\poly}  
\title{The Homotopy Class of twisted $L_\infty$-morphisms }
\author{
  \textbf{Andreas Kraft}\thanks{\texttt{akraft@unisa.it}},\\[0.3cm]
   Dipartimento di Matematica\\
   Università degli Studi di Salerno\\
   via Giovanni Paolo II, 123\\
   84084 Fisciano (SA)\\
   Italy \\[0.5cm]
  \textbf{Jonas Schnitzer}\thanks{\texttt{jonas.schnitzer@math.uni-freiburg.de}},\\[0.3cm]
  Department of Mathematics\\
   University of Freiburg\\
   Ernst-Zermelo-Straße, 1\\
	D-79104 Freiburg\\
   Germany
}
\begin{document}
\selectlanguage{english}

\maketitle

\abstract{
The global formality of Dolgushev depends on the choice of a 
torsion-free covariant derivative. 
We prove that the globalized formalities with respect to two different covariant 
derivatives are homotopic. More explicitly, we derive the statement by proving a more  
general homotopy equivalence between $L_\infty$-morphisms that are twisted with 
gauge equivalent Maurer-Cartan elements.
}

\tableofcontents

%
%
\section{Introduction}

The celebrated formality theorem by Kontsevich \cite{kontsevich:2003a} provides the 
existence of an $L_\infty$-quasi-isomorphism from the differential graded 
Lie algebra (DGLA) of polyvector fields $\Tpoly(\mathbb{R}^d)$ to the DGLA of polydifferential operators $\Dpoly(\mathbb{R}^d)$. In 
\cite{dolgushev:2005a,dolgushev:2005b} Dolgushev globalized this result to 
general smooth manifolds $M$ using a geometric approach. Being a quasi-isomorphism, this formality induces a bijective correspondence
\begin{equation}
	\label{eq:IntroFormality}
	\boldsymbol{U}\colon
	\Def(\Tpoly(M)[[\hbar]]) 
	\longrightarrow
	\Def(\Dpoly(M)[[\hbar]])
\end{equation} 
between equivalence classes $\Def(\Tpoly(M)[[\hbar]])$ of formal Poisson structures 
$\hbar\pi \in \Secinfty(\Anti^2 TM)[[\hbar]]$ on $M$ and equivalence classes 
$\Def(\Dpoly(M)[[\hbar]])$ of star products $\star$ on $M$, see also 
\cite{canonaco:1999a,manetti:2005a} for more details on deformation theory. 
In particular, 
this associates to a classical Poisson structure $\pi_\cl$ a class of deformation 
quantizations $\boldsymbol{U}([\hbar\pi_\cl])$ in the sense of the seminal paper 
\cite{bayen.et.al:1978a}. On 
the other hand, it also gives a way to assign to each star product a class 
of formal Poisson structures, the so-called \emph{Kontsevich class} of the 
star product.

However, the above mentioned globalization procedure of the 
Kontsevich formality from $\mathbb{R}^d$ to a general manifold $M$ 
discussed in \cite{dolgushev:2005a}  depends on the choice of a 
torsion-free covariant derivative. More explicitly, it uses the covariant 
derivative to obtain Fedosov resolutions of the polyvector 
fields and polydifferential operators between which one has a fiberwise Kontsevich 
formality. Recently, in 
\cite[Theorem~2.6]{bursztyn.dolgushev.waldmann:2012a} 
it has been shown that the map $\boldsymbol{U}$ 
from \eqref{eq:IntroFormality} does not depend on the choice of the connection.  
In this paper we investigate the role of the covariant 
derivative at the level of the formality and not
at the level of equivalence classes of Maurer-Cartan elements. 

The key point is that changing the covariant 
derivative corresponds to twisting by a Maurer-Cartan element that is equivalent to zero, see \cite[Appendix~C]{bursztyn.dolgushev.waldmann:2012a} for this observation and 
\cite{dolgushev:2005a,dolgushev:2005b,dotsenko.shadrin.vallette:2018,
esposito.dekleijn:2018a:pre} for 
more details on the twisting procedure. This corresponds to a more 
general observation: Let $F \colon (\liealg{g},\D,[\argument,\argument]) \rightarrow 
(\liealg{g}',\D',[\argument,\argument])$ be an 
$L_\infty$-morphism between DGLAs with complete descending and 
exhaustive filtrations $\mathcal{F}^\bullet\liealg{g}$ resp. 
$\mathcal{F}^\bullet\liealg{g}'$. Moreover, let $\pi \in \mathcal{F}^1\liealg{g}^1$ 
be a Maurer-Cartan element  equivalent to zero via 
$\pi = \exp([g,\argument])\acts 0$ with $g \in \mathcal{F}^1\liealg{g}^0$. 
The element $\pi' = \sum_{k=1}^\infty\frac{1}{k!} F^1_k(\pi
\vee \cdots \vee \pi) \in \mathcal{F}^1\liealg{g}'^1$ is a 
Maurer-Cartan element in $\liealg{g}'$ equivalent to zero. Let the equivalence be given 
by $g' \in \mathcal{F}^1\liealg{g}'^0$, then one obtains 
Proposition~\ref{prop:TwistMorphHomEqu}:

\begin{nnproposition}
  The $L_\infty$-morphisms $F$ and $e^{[-g',\argument]}\circ F^{\pi} \circ 
	e^{[g,\argument]}$ from $(\liealg{g},\D,[\argument,\argument])$ to 
	$(\liealg{g}',\D',[\argument,\argument])$ are homotopic, where 
	$F^\pi$ denotes the $L_\infty$-morphism $F$ twisted by $\pi$. 
\end{nnproposition}
By homotopic we mean here that the two $L_\infty$-morphisms are equivalent 
Maurer-Cartan elements in the convolution DGLA, compare 
\cite[Definition~3]{dolgushev:2007a}, see also \cite{dotsenko.poncin:2016a} for a 
comparison of different notions of homotopies between $L_\infty$-morphisms. 

This general statement can be applied to the globalization of the Kontsevich formality.
Our main result here is the following theorem, see 
Theorem~\ref{thm:GlobalwithModifareHomotopic}: 

\begin{nntheorem}
  Let $\nabla$ and $\nabla'$ be two different torsion-free covariant 
	derivatives. Then the two global formalities constructed via Dolgushev's globalization 
	procedure are homotopic.
\end{nntheorem}
This immediately implies that they induce the same map on the equivalence classes of 
formal Maurer-Cartan elements, i.e. \cite[Theorem~2.6]{bursztyn.dolgushev.waldmann:2012a}.

Note that there are many other similar globalization procedures of 
formalities based on Dolgushev's globalization of the Kontsevich formality 
\cite{dolgushev:2005a,dolgushev:2005b}, e.g. \cite{calaque:2005a} for Lie algebroids, 
\cite{liao.stienon.xu:2018a} for differential graded manifolds and 
\cite{dolgushev:2006a} for Hochschild chains. The 
above technique can be adapted to these cases and we plan to  pursue them 
in further works.

Finally, we want to mention that in \cite[Section~7]{kontsevich:2003a} 
there is also 
a globalization procedure explained, using the language of $\infty$-jet 
spaces of polyvector fields and polydifferential operators, respectively. 
However, these $\infty$-jet spaces are (non-canonically) isomorphic as 
vector bundles to the formally completed fiberwise polyvector fields and 
polydifferential operators, respectively.
The corresponding  isomorphisms are constructed by the choice of a 
connnection. We strongly believe that the globalization procedure proposed 
by Kontsevich in \cite{kontsevich:2003a} is homotopic to the 
globalization from Dolgushev \cite{dolgushev:2005a,dolgushev:2005b} 
we are using in this note. 

The paper is organized as follows: In Section~\ref{subsec:MCandEquiv} we recall 
the basics concerning Maurer-Cartan elements in DGLAs and $L_\infty$-algebras, 
the notions of gauge and homotopy equivalence as well as the twisting procedure.
Then we recall in Section~\ref{section:RelbetTwistedMorphisms} the 
interpretation of $L_\infty$-morphisms as Maurer-Cartan elements and the notion 
of homotopic $L_\infty$-morphisms. We show that pre- and post-compositions of 
homotopic $L_\infty$-morphisms with an $L_\infty$-morphism are again homotopic, 
a statement that is probably well-known to the experts, but that we could not 
find in the literature. Moreover, we prove here Proposition~\ref{prop:TwistMorphHomEqu}, 
i.e. that the twisted $L_\infty$-morphisms are homotopic for equivalent 
Maurer-Cartan elements. Finally, we apply these general results to the globalization 
of Kontsevich's formality theorem, proving Theorem~\ref{thm:GlobalwithModifareHomotopic} 
and also an equivariant version for Lie group actions with invariant covariant 
derivatives.

\textbf{Acknowledgements:}
  The authors are grateful to Chiara Esposito, 
	Ryszard Nest and Boris Tsygan for the idea 
	leading to this letter and for many helpful comments. 
	This work was supported by the National Group for Algebraic and 
	Geometric Structures, and their Applications (GNSAGA – INdAM).
	The second author is supported by the DFG research training 
	group "gk1821: Cohomological Methods in Geometry".

\section{Preliminaries: Maurer-Cartan Elements and Twisting}
\label{subsec:MCandEquiv}

\subsection{Maurer-Cartan Elements in DGLAs}

We want to recall the basics concerning differential graded Lie algebras 
(DGLAs), Maurer-Cartan elements and their equivalence classes. In order to 
make sense of the gauge equivalence we consider in this context 
DGLAs $(\liealg{g}^\bullet,\D,[\argument,\argument])$ with complete descending filtrations 
\begin{equation}
  \cdots 
	\supseteq 
	\mathcal{F}^{-2}\liealg{g}
	\supseteq 
	\mathcal{F}^{-1}\liealg{g}
	\supseteq 
	\mathcal{F}^{0}\liealg{g}
	\supseteq 
	\mathcal{F}^{1}\liealg{g}
	\supseteq 
	\cdots,
	\quad \quad
	\liealg{g}
	\cong
	\varprojlim \liealg{g}/\mathcal{F}^n\liealg{g}
\end{equation}  
and 
\begin{equation}
  \D(\mathcal{F}^k\liealg{g})
	\subseteq
	\mathcal{F}^k\liealg{g}
	\quad \quad \text{ and } \quad \quad
	[\mathcal{F}^k\liealg{g},\mathcal{F}^\ell\liealg{g}]
	\subseteq 
	\mathcal{F}^{k+\ell}\liealg{g}.
\end{equation}
In particular, $\mathcal{F}^1\liealg{g}$ is a projective limit of 
nilpotent DGLAs. In most cases the filtration will be bounded below, i.e. 
bounded from the left with $\liealg{g}=\mathcal{F}^k\liealg{g}$ for some 
$k\in \mathbb{Z}$. If the filtration is unbounded, then we assume always that 
it is in addition exhaustive, i.e. that
\begin{equation}
  \liealg{g}
	=
	\bigcup_n \mathcal{F}^n\liealg{g},
\end{equation}
even if we do not mention it explicitly. Moreover, we assume that the 
DGLA morphisms are compatible with the filtrations.

\begin{example}
  One motivation to consider the case of filtered DGLAs are formal 
  power series $\liealg{g}[[\hbar]]$ of a DGLA $\liealg{g}$ with filtration 
  $\mathcal{F}^k (\liealg{g}[[\hbar]]) = \hbar^k (\liealg{g}[[\hbar]])$.
\end{example}

\begin{definition}[Maurer-Cartan elements]
  Let $(\liealg{g},\D,[\,\cdot\,,\,\cdot\,])$ be a DGLA with complete 
	descending filtration. Then $\pi \in \mathcal{F}^1\liealg{g}^1$ is called 
	\emph{Maurer-Cartan element} if it 
	satisfies the Maurer-Cartan equation
	\begin{equation}
	  \D \pi + \frac{1}{2}[\pi,\pi]
		=
		0.
	\end{equation}
	The set of Maurer-Cartan elements is denoted by $\Mc(\liealg{g})$.
\end{definition}
Maurer-Cartan elements $\pi$ lead to \emph{twisted} DGLA structures 
$(\liealg{g},\D+ [\pi,\argument],[\argument,\argument])$ and one has 
a gauge action on the set of Maurer-Cartan elements.

\begin{proposition}[Gauge action]
  \label{prop:GaugeactionDGLA}
  Let $(\liealg{g},\D,[\,\cdot\,,\,\cdot\,])$ be a DGLA with complete 
	descending filtration. The \emph{gauge group} 
	$ \group{G}^0(\liealg{g})  =  \{ \Phi = e^{[g,\,\cdot\,]} \colon \liealg{g} 
	  \longrightarrow \liealg{g} \mid g \in \mathcal{F}^1\liealg{g}^0\}$		
  defines an action on $\Mc(\liealg{g})$ via
	\begin{equation}
	  \label{eq:gaugeactiong0}
	  \exp([g,\,\cdot\,]) \acts \pi 
		=
		\sum_{n=0}^\infty \frac{( [g,\,\cdot\,])^n}{n!}(\pi) 
		-
		\sum_{n=0}^\infty \frac{([g,\,\cdot\,])^n}{(n+1)!} (\D g)
		=
		\pi - \frac{\exp([g,\,\cdot\,]) - \id}{[g,\argument]}(\D g + [\pi,g]).
	\end{equation}
	The set of equivalence classes of Maurer-Cartan elements in $\liealg{g}$ 
	is denoted by
	\begin{equation}
	  \Def(\liealg{g})
		=
		\frac{\Mc(\liealg{g})} {\group{G}^0(\liealg{g})}.
	\end{equation}
\end{proposition}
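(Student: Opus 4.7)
The approach I would take is the classical \emph{Maurer-Cartan trick}: enlarge $\liealg{g}$ by a formal odd element so that the differential $\D$ becomes an inner derivation, reducing the entire statement to the assertion that conjugation by $\exp([g,\argument])$ preserves the bracket. Concretely, I would form $\hat{\liealg{g}} = \liealg{g} \oplus \mathbb{K}\xi$ with $\xi$ of degree $1$, extending the bracket by $[\xi,a] = \D a$ and $[\xi,\xi]=0$, and extending the filtration by $\mathcal{F}^0\hat{\liealg{g}} \supseteq \mathbb{K}\xi$. Using that $\D$ is a degree-$1$ derivation of $[\argument,\argument]$, one checks the graded Jacobi identity on $\hat{\liealg{g}}$ and that it inherits a complete filtration. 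In $\hat{\liealg{g}}$ the Maurer-Cartan equation for $\pi \in \mathcal{F}^1\liealg{g}^1$ takes the equivalent form $[\xi+\pi,\xi+\pi]=0$.

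The first key step is then to compute $e^{[g,\argument]}(\xi+\pi)$ for $g \in \mathcal{F}^1\liealg{g}^0$. Since $\operatorname{ad}_g$ shifts the filtration up by one and $\mathcal{F}^1\liealg{g}$ is complete, the series converges. Using the graded identity $[g,\xi] = -[\xi,g] = -\D g$, an easy reindexing yields
\begin{equation*}
  e^{[g,\argument]}(\xi+\pi)
  = \xi + \sum_{n=0}^\infty \frac{[g,\argument]^n}{n!}(\pi)
  - \sum_{n=0}^\infty \frac{[g,\argument]^n}{(n+1)!}(\D g),
\end{equation*}
so the $\xi$-component is preserved and the $\liealg{g}$-component is exactly the first formula in \eqref{eq:gaugeactiong0}. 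Because $\exp(\operatorname{ad}_g)$ is an automorphism of the graded bracket on $\hat{\liealg{g}}$, it preserves $[\xi+\pi,\xi+\pi]=0$, hence $\exp([g,\argument])\acts\pi$ is automatically Maurer-Cartan. The filtration condition $[\mathcal{F}^k\liealg{g},\mathcal{F}^\ell\liealg{g}]\subseteq \mathcal{F}^{k+\ell}\liealg{g}$ together with $\D(\mathcal{F}^k\liealg{g})\subseteq \mathcal{F}^k\liealg{g}$ guarantees membership in $\mathcal{F}^1\liealg{g}^1$.

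To obtain the compact reformulation on the right of \eqref{eq:gaugeactiong0}, I would split off the $n=0$ term of the first sum, write the remaining terms as $\sum_{n\geq 0}\frac{[g,\argument]^n}{(n+1)!}[g,\pi] = -\sum_{n\geq 0}\frac{[g,\argument]^n}{(n+1)!}[\pi,g]$, and combine with the second sum to recognize the operator $\frac{\exp([g,\argument])-\id}{[g,\argument]}$. The equality of the two expressions is then a formal power series identity in $\operatorname{ad}_g$.

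Finally, to see that this defines a group action, I would observe that on $\mathcal{F}^1\liealg{g}^0$ — which is pro-nilpotent — the Baker-Campbell-Hausdorff series $\mathsf{BCH}(g_1,g_2)$ converges, and the composition $\exp([g_1,\argument])\circ\exp([g_2,\argument]) = \exp([\mathsf{BCH}(g_1,g_2),\argument])$ again belongs to $\group{G}^0(\liealg{g})$; compatibility with the action on $\Mc(\liealg{g})$ follows from the same equality applied to $\hat{\liealg{g}}$. The only subtle point — and the one I would be most careful with — is the sign and filtration bookkeeping in the extended DGLA $\hat{\liealg{g}}$, in particular verifying that $[\xi,\argument]$ is consistent with $\D$ as a graded derivation; all remaining ingredients are formal manipulations of convergent series.
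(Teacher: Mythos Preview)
Your argument is correct and is the standard route to this proposition; the paper, however, does not supply a proof at all --- it states Proposition~\ref{prop:GaugeactionDGLA} as a known fact and only remarks afterwards that the series converge because $g\in\mathcal{F}^1\liealg{g}$ and the filtration is complete. So there is nothing to compare at the level of strategy.

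A brief comment on what your approach buys over a bare ``compute and check'': by passing to $\hat{\liealg{g}}=\liealg{g}\oplus\mathbb{K}\xi$ you turn the verification that $\exp([g,\argument])\acts\pi$ is Maurer--Cartan into the single line ``Lie algebra automorphisms preserve $[\,\cdot\,,\,\cdot\,]$'', and the group-action axiom into BCH on the pro-nilpotent $\mathcal{F}^1\liealg{g}^0$. This is cleaner than checking directly that the formula \eqref{eq:gaugeactiong0} satisfies the MC equation. One small point worth making explicit in a final write-up: the paper phrases $\group{G}^0(\liealg{g})$ as a set of automorphisms $\Phi=e^{[g,\argument]}$ of $\liealg{g}$, but the action formula uses $g$ itself (through $\D g$), so strictly speaking the action is by the group $(\mathcal{F}^1\liealg{g}^0,\mathsf{BCH})$ rather than by its image in $\Aut(\liealg{g})$. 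Your $\hat{\liealg{g}}$-picture resolves this automatically, since $e^{\ad_g}$ acting on $\hat{\liealg{g}}$ remembers $\D g$ via $e^{\ad_g}\xi=\xi-\frac{e^{\ad_g}-\id}{\ad_g}\D g$.
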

Note that the gauge action is well-defined since $g \in \mathcal{F}^1\liealg{g}$ 
and as the filtration is complete. 
$\Def(\liealg{g})$ is the transformation groupoid of the gauge action and 
also called \emph{Goldman-Millson groupoid} \cite{goldman.millson:1988a}. 
It plays an important role in deformation theory \cite{manetti:2005a}.
In particular, the definition implies that twisting with gauge equivalent 
Maurer-Cartan elements leads to isomorphic DGLAs.

\begin{corollary}
  \label{cor:GaugeEquivMCTwistsQuis}
  Let $(\liealg{g},\D,[\,\cdot\,,\,\cdot\,])$ be a DGLA with complete 
	descending filtration and with gauge equivalent Maurer-Carten elements 
	$\pi',\pi$ via $g \in \group{G}^0(\liealg{g})$. Then one has
	\begin{equation}
		\label{eq:MCEquivalenceIso}
		\D + [\pi',\argument]
		=
		\exp([g,\,\cdot\,]) \circ (\D + [\pi,\argument]) \circ \exp([-g,\,\cdot\,]).
	\end{equation}
	In other words, $\exp([g,\argument]) \colon  (\liealg{g},\D+[\pi,\argument],
	[\,\cdot\,,\,\cdot\,]) \rightarrow (\liealg{g},\D+[\pi',\argument],
	[\,\cdot\,,\,\cdot\,])$ is an isomorphism of DGLAs.
\end{corollary}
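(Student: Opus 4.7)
The plan is to verify the identity \eqref{eq:MCEquivalenceIso} by separately analyzing how the bracket part $[\pi,\argument]$ and the differential $\D$ behave under conjugation by $\exp([g,\argument])$, and then recognizing the two pieces as precisely the alternative forms of the gauge action in \eqref{eq:gaugeactiong0}.

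First, since $g \in \mathcal{F}^1\liealg{g}^0$, the operator $[g,\argument]$ is an even derivation of the graded Lie bracket by the graded Jacobi identity. Exponentiating, $\exp([g,\argument])$ is a graded Lie algebra automorphism, so
\begin{equation*}
  \exp([g,\argument]) \circ [\pi,\argument] \circ \exp([-g,\argument])
  = [\exp([g,\argument])\pi,\argument].
\end{equation*}
This handles the inner-derivation part of $\D + [\pi,\argument]$.

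The more delicate part is the conjugation of $\D$. Using that $\D$ is a degree $+1$ derivation of $[\argument,\argument]$ and that $|g|=0$, one computes the graded commutator $[\D,[g,\argument]] = [\D g,\argument]$ as operators on $\liealg{g}$. Then differentiating $t \mapsto \exp(t[g,\argument]) \circ \D \circ \exp(-t[g,\argument])$ and integrating from $0$ to $1$ (convergence is ensured by completeness of the filtration since $g \in \mathcal{F}^1\liealg{g}$) yields
\begin{equation*}
  \exp([g,\argument]) \circ \D \circ \exp([-g,\argument])
  = \D - \left[\frac{\exp([g,\argument]) - \id}{[g,\argument]}\D g,\argument\right].
\end{equation*}

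Adding the two pieces, the right-hand side of \eqref{eq:MCEquivalenceIso} becomes $\D + [\pi'',\argument]$, where
\begin{equation*}
  \pi'' = \exp([g,\argument])\pi - \frac{\exp([g,\argument]) - \id}{[g,\argument]} \D g.
\end{equation*}
It remains to identify $\pi'' = \pi'$, which follows by rewriting the second expression for the gauge action in \eqref{eq:gaugeactiong0}: using $[\pi,g] = -[g,\pi]$ (as $|\pi|=1$, $|g|=0$), the term $-\frac{\exp([g,\argument])-\id}{[g,\argument]}[\pi,g]$ telescopes to $(\exp([g,\argument])-\id)\pi$, so that $\pi + (\exp([g,\argument])-\id)\pi = \exp([g,\argument])\pi$, matching $\pi''$. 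The main obstacle is the careful handling of signs in the commutator $[\D,[g,\argument]]$ and in the reformulation of the gauge action, but this is routine once $|g|=0$ is used.
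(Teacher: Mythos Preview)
Your proof is correct. The paper does not actually give a proof of this corollary: it simply states it as an immediate consequence of the gauge action formula in Proposition~\ref{prop:GaugeactionDGLA}. Your argument spells out precisely the verification one would expect---that $\exp([g,\argument])$ is a Lie algebra automorphism (handling $[\pi,\argument]$) and that conjugation of $\D$ produces the $\D g$-correction term---and then matches the result with the second expression for the gauge action in~\eqref{eq:gaugeactiong0}. This is exactly the computation implicit in the paper's ``the definition implies'' remark preceding the corollary, so the approaches coincide; you have simply written out what the authors regarded as routine.
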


\subsection{Maurer-Cartan Elements in $L_\infty$-algebras}

Let us recall the basics of $L_\infty$-algebras and $L_\infty$-morphisms. 
Proofs and further details can be found in
\cite{dolgushev:2005a,dolgushev:2005b,esposito.dekleijn:2018a:pre}. Note that in 
this work we only consider $L_\infty$-morphisms between DGLAs.

An \emph{$L_\infty$-algebra} $(L,Q)$ is a graded vector space $L$ together 
with a degree $+1$ codifferential $Q$ on the graded cocommutative cofree 
coalgebra $(\cc{\Sym}(L[1]),\cc{\Delta})$ without counit cogenerated by $L[1]$. 
We always consider a vector space over a field 
$\mathbb{K}$ of characteristic zero. The codifferential $Q$ is uniquely determined by the
Taylor components $ Q_n\colon \Sym^n(L[1])\longrightarrow L[2]$ for $n \geq 1$.
Sometimes we also write $Q_k = Q_k^1$ and following
\cite{canonaco:1999a} we denote by $Q_n^i$ the component of $Q_n^i
\colon \Sym^n (L[1]) \rightarrow \Sym^i (L[1])[1]$ of $Q$. The property $Q^2=0$ 
implies in particular that $Q_1^1\colon L \rightarrow L[1]$ is a 
cochain differential.
Let us consider two $L_\infty$-algebras $(L,Q)$ and
$(L',Q')$.  A degree $0$ coalgebra morphism
$ F\colon  \cc{\Sym}(L[1]) \longrightarrow  \cc{\Sym}(L'[1])$
such that $FQ = Q'F$ is called \emph{$L_\infty$-morphism}. 
Just like the codifferential also the morphism $F$ is also uniquely determined by its
Taylor components 
$  F_n\colon \Sym^n(L[1])\longrightarrow L'[1]$,
where $n\geq 1$. We write again $F_k = F_k^1$ and we get 
coefficients $F_n^j \colon \Sym^n (L[1]) \rightarrow 
\Sym^j (L'[1])$ of $F$. 
Note that $F_n^j$ depends only on $F_k^1 = F_k$ for $k\leq n-j+1$. 
In particular, the first structure map of $F$ is a map of complexes 
$F_1^1\colon (L,Q_1^1) \rightarrow (L',(Q')_1^1)$ and one calls $F$ 
\emph{$L_\infty$-quasi-isomorphism} if $F_1^1$ is a quasi-isomorphism 
of complexes.

\begin{example}[DGLA]
  \label{ex:DGLAasLinfty}
  A DGLA $(\liealg{g},\D,[\,\cdot\,,\,\cdot\,])$ is an $L_\infty$-algebra 
	with $Q_1 = -\D$ and $Q_2(\gamma \vee \mu) = 
	-(-1)^{\abs{\gamma}}[\gamma,\mu]$, where $\abs{\gamma}$ denotes the degree 
	in $\liealg{g}[1]$.
\end{example}

In order to generalize the definition of Maurer-Cartan elements we 
consider again $L_\infty$-algebras with complete descending and exhaustive 
filtrations on $L$. We assume again that $L_\infty$-morphisms are compatible 
with the filtrations. 

\begin{definition}[Maurer-Cartan elements II]
  Let $(L,Q)$ be an $L_\infty$-algebra 
	with compatible complete descending filtration. 
	Then $\pi\in \mathcal{F}^1 L[1]^0$ is called 
	\emph{Maurer-Cartan element} if it satisfies the Maurer-Cartan equation
	\begin{equation}
	  \label{eq:mclinfty}
	  \sum_{n>0} \frac{1}{n!} Q_n(\pi\vee\cdots\vee \pi)
		=
		0.
	\end{equation}
	The set of Maurer-Cartan elements is again denoted by $\Mc(L)$.
\end{definition}

Note that the sum in \eqref{eq:mclinfty} is well-defined for $x\in 
\mathcal{F}^1L^1$ because of the completeness of $L$.
We recall some useful properties from \cite[Prop.~1]{dolgushev:2005b}:

\begin{lemma}
  \label{lemma:twistinglinftymorphisms}
  Let $F\colon (\liealg{g},Q)\rightarrow (\liealg{g}',Q')$ be an 
	$L_\infty$-morphism of DGLAs and $\pi \in \mathcal{F}^1\liealg{g}^1$. 
	\begin{lemmalist}			
		\item $ d \pi + \frac{1}{2}[\pi,\pi]=0$ is equivalent to 
		      $Q(\cc{\exp}(\pi))=0$, where $\cc{\exp}(\pi)=
					\sum_{k=1}^\infty \frac{1}{k!} 
					\pi^{\vee k}$.
		
		\item $F(\cc{\exp}(\pi)) = \cc{\exp}(S)$ with 
		      $S = F^1(\cc{\exp}(\pi))= 
					\sum_{n>0}\frac{1}{n!} F_n(\pi\vee \cdots \vee\pi)$.
					
		\item If $\pi$ is a Maurer-Cartan element, then so is $S$.
  \end{lemmalist}
\end{lemma}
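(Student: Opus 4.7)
The plan is to treat all three parts by exploiting the coalgebra language: $\cc{\exp}(\pi)$ is a ``group-like'' element of the cofree cocommutative coalgebra $\cc{\Sym}(L[1])$, and both coderivations and coalgebra morphisms behave well on such elements.

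For part (i), the key is the general identity
\begin{equation*}
  Q(\cc{\exp}(\pi))
  =
  \sum_{n \geq 1} \frac{1}{n!} Q_n(\pi^{\vee n})
  +
  \left( \sum_{n \geq 1} \frac{1}{n!} Q_n(\pi^{\vee n}) \right) \vee \cc{\exp}(\pi),
\end{equation*}
valid for any coderivation $Q$ with Taylor components $Q_n$. This follows by expanding $Q(\pi^{\vee k}) = \sum_{n=1}^{k} \binom{k}{n} Q_n(\pi^{\vee n}) \vee \pi^{\vee(k-n)}$ (the general formula for a coderivation on a symmetric power) and swapping the order of summation. In the DGLA case only $Q_1 = -\D$ and $Q_2$ contribute, so the bracketed expression collapses to $-(\D\pi + \frac{1}{2}[\pi,\pi])$. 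Since this is exactly the component of $Q(\cc{\exp}(\pi))$ living in the cogenerator part $L[1]$, its vanishing is equivalent to $Q(\cc{\exp}(\pi)) = 0$.

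For (ii), I would use that $F$ is a morphism of coaugmented coalgebras, i.e.\ $(F \otimes F) \circ \cc{\Delta} = \cc{\Delta}' \circ F$. A direct check with the coproduct shows that $\cc{\exp}(\pi)$ is group-like, and the group-like property is preserved by any coalgebra morphism. Since group-like elements of a cofree cocommutative coalgebra are exactly exponentials, we must have $F(\cc{\exp}(\pi)) = \cc{\exp}(S)$ for some $S \in L'[1]^0$, and projecting onto the cogenerator $L'[1]$ recovers the explicit formula $S = F^1(\cc{\exp}(\pi)) = \sum_{n \geq 1} \frac{1}{n!} F_n(\pi^{\vee n})$.

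Finally, part (iii) is then immediate by combining (i), (ii) and the defining relation $F \circ Q = Q' \circ F$ of an $L_\infty$-morphism: if $\pi$ solves the Maurer--Cartan equation then $Q(\cc{\exp}(\pi)) = 0$ by (i), hence $0 = F(Q(\cc{\exp}(\pi))) = Q'(F(\cc{\exp}(\pi))) = Q'(\cc{\exp}(S))$, and one more application of (i), now to the target DGLA $\liealg{g}'$, shows that $S$ is Maurer--Cartan. The main obstacle is purely bookkeeping: keeping the reduced-versus-coaugmented coalgebra conventions straight and making the infinite sums rigorous. This is handled by $\pi \in \mathcal{F}^1\liealg{g}^1$ together with completeness of the filtration, which ensures that $\pi^{\vee n} \in \mathcal{F}^n$ and that all sums converge in the projective limit.
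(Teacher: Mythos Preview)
Your argument is correct and is exactly the standard coalgebraic proof of these facts. Note, however, that the paper does not give its own proof of this lemma: it is introduced with ``We recall some useful properties from \cite[Prop.~1]{dolgushev:2005b}'' and stated without proof. Your write-up is therefore not competing with any argument in the paper; it simply supplies the details of the cited reference, and the approach you take (group-like behaviour of $\cc{\exp}(\pi)$ under coderivations and coalgebra morphisms, plus projection to the cogenerator) is precisely the one used in Dolgushev's thesis.
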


We recall the generalization of the gauge action to an equivalence 
relation on the set of Maurer-Cartan elements of $L_\infty$-algebras. 
We follow \cite[Section~4]{canonaco:1999a} but 
adapt the definitions to the case of $L_\infty$-algebras with complete 
descending and exhaustive filtrations as in \cite{dotsenko.poncin:2016a}. 
Let therefore 
$(L,Q)$ be such an $L_\infty$-algebra with complete descending and 
exhaustive filtration and 
consider $L[t]=L \otimes \mathbb{K}[t]$ which has again a descending 
and exhaustive filtration
\begin{equation*}
  \mathcal{F}^k L[t]
	=
	\mathcal{F}^kL \otimes \mathbb{K}[t].
\end{equation*} 
We denote its completion by $\widehat{L[t]}$ and note that since $Q$ is compatible 
with the filtration it extends to $\widehat{L[t]}$. Similarly, 
$L_\infty$-morphisms extend to these completed spaces. 

\begin{remark} 
  \label{rm:completion}
  Note that one can define the completion as space of equivalence classes of 
	Cauchy sequences with respect to the filtration topology.
  Alternatively, the completion can be identified with
  \begin{equation*}
    \varprojlim L[t] / \mathcal{F}^n L[t] 
	  \subset \prod_n L[t]/\mathcal{F}^nL[t]
		\cong
		\prod_n L/\mathcal{F}^nL \otimes\mathbb{K}[t]
  \end{equation*}
  consisting of all coherent tuples $X=(x_n)_n \in 
	\prod_n L[t]/\mathcal{F}^nL[t] $, where
  \begin{equation*}
    L[t]/\mathcal{F}^{n+1}L[t] \ni x_{n+1}
	  \longmapsto
	  x_n \in L[t]/\mathcal{F}^n[t]
  \end{equation*}
	under the obvious surjections. 
	Moreover, $\mathcal{F}^n\widehat{L[t]}$ corresponds to the kernel 
	of $\varprojlim L[t]/\mathcal{F}^nL[t] \rightarrow L[t]/\mathcal{F}^nL[t]$ 
	and thus
	\begin{equation*}
	  \widehat{L[t]} / \mathcal{F}^n\widehat{L[t]} 
		\cong 
    L[t] / \mathcal{F}^n L[t].
	\end{equation*}
	Since $L$ is complete, we can also interpret $\widehat{L[t]}$ as the 
	subspace of $L[[t]]$ such that 
	$X\, \mathrm{ mod } \,\mathcal{F}^nL[[t]]$ is polynomial in $t$. In particular, 
	$\mathcal{F}^n\widehat{L[t]}$ is the subspace of elements in 
	$\mathcal{F}^nL[[t]]$ that are polynomial in $t$ modulo 
	$\mathcal{F}^mL[[t]]$ for all $m$.
\end{remark}

By the above construction of $\widehat{L[t]}$ it is clear that 
differentiation $\frac{\D}{\D t}$ and integration with respect to 
$t$ extend to it since they do not change the filtration. Sometimes we 
write also $\dot{X}$ instead of $\frac{\D}{\D t}X$ and, 
moreover, the evaluation
\begin{equation*}
  \delta_s 
	\colon
	\widehat{L[t]} \ni X
	\longmapsto
	X(s)
	=
	X\at{t=s} \in L
\end{equation*}
is well-defined for all $s\in \mathbb{K}$ since $L$ is complete.

\begin{example}
  In the case that the filtration of $L$ comes from a grading $L^\bullet$, the 
	completion is given by $\widehat{L[t]}\cong \prod_i L^i[t]$, i.e. by polynomials 
	in each degree. A special case is here the case of formal power series 
	$L= V[[\hbar]]$ with $\widehat{L[t]} \cong (V[t])[[\hbar]]$ as in 
	\cite[Appendix~A]{bursztyn.dolgushev.waldmann:2012a}.
\end{example}

Now we can introduce a general equivalence relation between Maurer-Cartan 
elements of $L_\infty$-algebras.

\begin{definition}[Homotopy equivalence]
  Let $(L,Q)$ be a $L_\infty$-algebra with a complete descending filtration. 
	The \emph{homotopy equivalence relation} on the set $\Mc(L)$ is 
	the transitive closure 
	of the relation $\sim$ defined by: $\pi_0 \sim \pi_1$ if and only if 
	there exist $\pi(t) \in \mathcal{F}^1\widehat{L^1[t]}$ and 
	$\lambda(t) \in \mathcal{F}^1\widehat{L^0[t]}$ such that
	\begin{align}
	  \label{eq:EquivMCElements}
		\begin{split}
		  \frac{\D}{\D t} \pi(t)
			& = 
			Q^1  (\lambda(t) \vee \exp(\pi(t)))
			=
			\sum_{n=0}^\infty \frac{1}{n!} Q^1_{n+1} (\lambda(t) \vee \pi(t) \vee 
			\cdots \vee \pi(t)),   \\
			\pi(0) 
			& = 
			\pi_0 
			\quad \quad \text{ and }\quad \quad 
			\pi(1)
			=
			\pi_1.
		\end{split}
	\end{align}
	The set of equivalence classes of Maurer-Cartan elements of $L$ is 
	denoted by $\Def(L) = \Mc(L) / \sim$.
\end{definition}

Note that in the case of 
nilpotent $L_\infty$-algebras it suffices to consider polynomials in $t$ 
as there is no need to complete $L[t]$, compare \cite{getzler:2009a}.
We check now that this is well-defined and even yields a curve $\pi(t)$ 
of Maurer-Cartan elements.

\begin{proposition}
  \label{prop:PioftUnique}
  For every $\pi_0 \in \mathcal{F}^1L^1$ and $\lambda(t) \in 
	\mathcal{F}^1\widehat{L^0[t]}$ 
	there exists a unique $\pi(t) \in \mathcal{F}^1\widehat{L^1[t]}$ 
	such that 
	$\frac{\D}{\D t} \pi(t) = Q^1  (\lambda(t) \vee \exp(\pi(t)))$ and 
	$\pi(0) = \pi_0$. If $\pi_0 \in \Mc(L)$, then $\pi(s) \in \Mc(L)$ for 
	all $s\in \mathbb{K}$.
\end{proposition}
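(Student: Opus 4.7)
The plan is to decouple the two assertions. First I would establish existence and uniqueness of $\pi(t)$ by inductively solving the ODE modulo successive filtration quotients $\mathcal{F}^N L$, and then I would show that the Maurer--Cartan defect $M(t) := Q^1(\cc{\exp}(\pi(t)))$ itself satisfies a linear homogeneous ODE, so that the uniqueness argument forces $M(t)\equiv 0$ whenever $M(0)=0$.

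For existence and uniqueness, the key observation is that, since $\lambda(t) \in \mathcal{F}^1\widehat{L^0[t]}$ and any candidate $\pi(t)$ lies in $\mathcal{F}^1\widehat{L^1[t]}$, compatibility of $Q$ with the filtration forces $\tfrac{1}{n!} Q^1_{n+1}(\lambda\vee\pi^{\vee n}) \in \mathcal{F}^{n+1}$. Hence, modulo $\mathcal{F}^{N+1}$, only finitely many terms of the right-hand side of \eqref{eq:EquivMCElements} contribute, and among those the dependence is only on $\pi(t)\bmod\mathcal{F}^N$. I would then induct on $N\geq 1$: the case $N=1$ is forced by $\pi\in\mathcal{F}^1$; granting a unique polynomial $\pi(t)\bmod\mathcal{F}^N \in (L/\mathcal{F}^N L)[t]$ solving the truncated ODE with value $\pi_0 \bmod\mathcal{F}^N$ at $t=0$, the ODE modulo $\mathcal{F}^{N+1}$ reduces to $\dot\pi(t)\equiv R_N(t)\pmod{\mathcal{F}^{N+1}}$ for a known polynomial $R_N(t)$ in $t$, and a single integration provides a unique polynomial lift with the required initial value. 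The coherent family then assembles via Remark~\ref{rm:completion} to the unique $\pi(t)\in\mathcal{F}^1\widehat{L^1[t]}$.

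For the preservation of the Maurer--Cartan property, set $M(t) := Q^1(\cc{\exp}(\pi(t)))$, noting $M(t) \in \mathcal{F}^1\widehat{L^2[t]}$ and $M(0)=0$ by hypothesis. Differentiating and substituting the defining ODE for $\pi$ gives
\begin{equation*}
  \dot M(t)
  = Q^1\bigl(\dot\pi(t)\vee\cc{\exp}(\pi(t))\bigr)
  = Q^1\bigl(Q^1(\lambda\vee\cc{\exp}(\pi))\vee\cc{\exp}(\pi)\bigr).
\end{equation*}
Expanding $Q^2(\lambda\vee\cc{\exp}(\pi))=0$ via the coderivation property of $Q$, combined with the group-likeness identity $Q(\cc{\exp}(\pi))=M(t)\vee\cc{\exp}(\pi)$ (understood in the unital extension with $Q(1)=0$), rewrites $\dot M(t)$ as a linear expression in $M(t)$ of the schematic form $\dot M(t)=\pm Q^1\bigl(\lambda(t)\vee M(t)\vee \cc{\exp}(\pi(t))\bigr)$. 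The crucial feature is that the coefficient operator $M\mapsto Q^1(\lambda\vee M \vee \cc{\exp}(\pi))$ strictly raises the filtration degree, since $\lambda\in\mathcal{F}^1$. Consequently an easy induction applies: if $M(t)\in\mathcal{F}^N$ for all $t$, then $\dot M(t)\in\mathcal{F}^{N+1}$, so integration with $M(0)=0$ gives $M(t)\in\mathcal{F}^{N+1}$. Since the filtration is Hausdorff by completeness, $M(t)=0$ for all $t\in\mathbb{K}$, which is the desired conclusion $\pi(s)\in\Mc(L)$.

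The main obstacle is the identity for $\dot M$ itself: verifying that $Q^2=0$ on $\lambda\vee\cc{\exp}(\pi)$ yields a linear expression in $M(t)$ requires careful tracking of Koszul signs as the degree $+1$ coderivation $Q$ commutes past the degree $-1$ element $\lambda$, together with the binomial combinatorics from the symmetric powers $\pi^{\vee n}$. Once this identity is in place, the rest is a formal consequence of the filtration-based induction already established in the first part.
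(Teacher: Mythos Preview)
Your approach is correct and, for the existence and uniqueness part, essentially identical to the paper's: both argue that modulo $\mathcal{F}^{N}$ only finitely many terms of the right-hand side survive and depend only on $\pi(t)\bmod\mathcal{F}^{N-1}$, so one inductively integrates a polynomial in $t$ and assembles the coherent family via Remark~\ref{rm:completion}. The only cosmetic difference is that the paper phrases this as first obtaining a formal power series solution in $\mathcal{F}^1L^1\otimes\mathbb{K}[[t]]$ and then checking it is polynomial modulo each $\mathcal{F}^n$, whereas you build the solution directly in the quotients.

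For the Maurer--Cartan preservation, the paper simply invokes the nilpotent case from \cite[Prop.~4.8]{canonaco:1999a} and does not reproduce the argument. Your proposed route---deriving a filtration-raising linear ODE for the defect $M(t)=Q^1(\cc{\exp}(\pi(t)))$ from $Q^2=0$ and then using the Hausdorff property---is precisely the standard argument underlying that reference, so you are spelling out what the paper takes as known. The identity you flag as the ``main obstacle'' is exactly the coderivation computation $Q(\lambda\vee\exp(\pi))=\exp(\pi)\vee Q^1(\lambda\vee\exp(\pi))-\lambda\vee\exp(\pi)\vee Q^1(\exp(\pi))$ that the paper quotes (from \cite[Lemma~4.1]{canonaco:1999a}) in the proof of Proposition~\ref{prop:CompofHomotopicHomotopic}; applying $Q^1$ to it and using $Q^2=0$ gives your linear equation for $\dot M$ with the claimed sign. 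So the combinatorics you worry about are already packaged in that lemma.
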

\begin{proof}
The proof for the nilpotent case can be found in 
\cite[Prop.~4.8]{canonaco:1999a}. In our setting of complete filtrations 
we only have to show that the solution $\pi(t) = \sum_{k=0}^\infty 
\pi_k t^k$ in the formal power series 
$\mathcal{F}^1 L^1 \otimes \mathbb{K}[[t]]$
is an element of $ \mathcal{F}^1\widehat{L^1[t]}$. 
By Remark~\ref{rm:completion} this is equivalent to 
$\pi(t) \,\mathrm{mod}\, \mathcal{F}^nL^1[[t]] \in L^1[t]$ for all $n$. 
Indeed, we have inductively
\begin{equation*}
  \frac{\D}{\D t}\pi(t) \mod \mathcal{F}^2L^1[[t]]
	=
	Q^1(\lambda(t)) \mod \mathcal{F}^2L^1[[t] 
	\in 
	L^1[1].
\end{equation*}
For the higher orders we get
\begin{equation*}
  \frac{\D}{\D t}\pi(t) \mod \mathcal{F}^nL^1[[t]]
	=
	\sum_{k=0}^{n-1}\frac{1}{k!} 
	Q^1_k(\lambda(t)\vee (\pi(t)+\mathcal{F}^{n-1})\vee \cdots 
	\vee (\pi(t)+\mathcal{F}^{n-1}))
	\mod \mathcal{F}^nL^1[[t]]
\end{equation*}
and thus $\pi(t) \,\mathrm{mod}\, \mathcal{F}^nL^1[[t]] \in L^1[t]$.
\end{proof}

One can show that for DGLAs with complete filtrations the two notions 
of equivalences are equivalent, 
see e.g. \cite[Thm.~5.5]{manetti:2005a}.

\begin{theorem}
  \label{thm:DGALHomvsGaugeEquiv}
  Two Maurer-Cartan elements in $(\liealg{g},\D,[\argument,\argument])$ 
	are homotopy equivalent if and only if they are gauge equivalent.
\end{theorem}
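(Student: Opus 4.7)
The forward direction (gauge implies homotopy) is explicit. Given $g \in \mathcal{F}^1\liealg{g}^0$ with $\pi_1 = \exp([g,\argument])\acts\pi_0$, I would set $\lambda(t) := g$ and $\pi(t) := \exp([tg,\argument])\acts\pi_0$. Completeness of the filtration together with $g \in \mathcal{F}^1$ ensures $\pi(t) \in \mathcal{F}^1\widehat{\liealg{g}^1[t]}$. Differentiating the explicit formula \eqref{eq:gaugeactiong0} in $t$ (noting that $\exp(t[g,\argument])$ commutes with $\D g$ up to the expected identity) yields
\begin{equation*}
  \dot\pi(t) \;=\; -\D g + [g,\pi(t)] \;=\; -\D\lambda(t) + [\lambda(t),\pi(t)],
\end{equation*}
which, under the translation of Example~\ref{ex:DGLAasLinfty} and the vanishing $Q^1_n = 0$ for $n \geq 3$ in the DGLA case, is precisely the homotopy ODE \eqref{eq:EquivMCElements}. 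Since gauge equivalence is itself a transitive relation, this implication extends to the transitive closure for free.

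For the reverse direction, let $(\pi(t),\lambda(t))$ be a single homotopy from $\pi_0$ to $\pi_1$; the goal is to produce $g \in \mathcal{F}^1\liealg{g}^0$ with $\pi_1 = \exp([g,\argument])\acts\pi_0$. The strategy is to construct an entire path $g(t) \in \mathcal{F}^1\widehat{\liealg{g}^0[t]}$ with $g(0) = 0$ and $\exp([g(t),\argument])\acts\pi_0 = \pi(t)$ for all $t$, and to set $g := g(1)$. Defining $\tilde\pi(t) := \exp([g(t),\argument])\acts\pi_0$ for an a priori arbitrary path $g(t)$, the standard derivative-of-the-exponential identity for the gauge action gives
\begin{equation*}
  \dot{\tilde\pi}(t) = -\D\xi(t) + [\xi(t),\tilde\pi(t)], \qquad \xi(t) := \frac{\id - \exp(-[g(t),\argument])}{[g(t),\argument]}(\dot g(t)),
\end{equation*}
so that imposing $\xi(t) = \lambda(t)$ reduces the problem to the ODE
\begin{equation*}
  \dot g(t) = \frac{[g(t),\argument]}{\id - \exp(-[g(t),\argument])}(\lambda(t)), \qquad g(0) = 0,
\end{equation*}
whose right-hand side is a Bernoulli-weighted formal series in the operator $[g(t),\argument]$. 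Since $g(t) \in \mathcal{F}^1$ makes $[g(t),\argument]$ strictly raise filtration degree, this series converges in the completion, and existence and uniqueness of a solution $g(t) \in \mathcal{F}^1\widehat{\liealg{g}^0[t]}$ follow by an order-by-order induction in the filtration, exactly as in Proposition~\ref{prop:PioftUnique}. By construction $\tilde\pi(t)$ then satisfies the same initial-value problem as $\pi(t)$, so the uniqueness part of Proposition~\ref{prop:PioftUnique} forces $\tilde\pi \equiv \pi$; evaluating at $t = 1$ yields the required gauge equivalence.

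The main technical obstacle is precisely the derivative-of-the-exponential identity displayed above: differentiating $\exp([g(t),\argument])$ for a time-dependent $g(t)$ does not simply produce $[\dot g(t),\argument]\exp([g(t),\argument])$ because $g(t)$ and $\dot g(t)$ need not commute, and the Bernoulli-weighted correction is what appears instead. The cleanest way to establish it is to interpret $\mathcal{F}^1\liealg{g}^0$ as a pro-nilpotent Lie algebra whose formal group $\exp(\mathcal{F}^1\liealg{g}^0)$ acts affinely on $\Mc(\liealg{g})$ through \eqref{eq:gaugeactiong0}; the identity then reduces to a standard Magnus/BCH computation on such groups. Since this is well-known folklore, the most efficient write-up in the paper is to cite \cite[Thm.~5.5]{manetti:2005a} directly.
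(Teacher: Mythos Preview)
Your plan is essentially what the paper does. The paper itself does not prove the theorem, citing \cite[Thm.~5.5]{manetti:2005a} as you ultimately recommend; but it then records the explicit content of the ``homotopy $\Rightarrow$ gauge'' direction as Proposition~\ref{prop:HomEquvsGaugeEqu}, and the argument there is exactly yours: solve an ODE for $A(t)$ (your $g(t)$) in terms of $\lambda(t)$, check by filtration induction that the solution lies in $\mathcal{F}^1\widehat{\liealg{g}^0[t]}$, establish the derivative-of-exponential identity, and invoke the uniqueness part of Proposition~\ref{prop:PioftUnique} to conclude that the gauge-orbit curve coincides with $\pi(t)$. The forward direction is likewise handled by the constant choice $\lambda(t)=g$.

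One small point to correct if you write out the details rather than cite: your displayed identity has
\[
  \xi(t) = \frac{\id - \exp(-[g(t),\argument])}{[g(t),\argument]}\,\dot g(t),
\]
whereas the paper's equation~\eqref{eq:ODEforA} (verified via \eqref{eq:DiffofExp}) gives
\[
  \lambda(t) = \frac{\exp([A(t),\argument])-\id}{[A(t),\argument]}\,\dot A(t).
\]
These differ by a factor $\exp([g(t),\argument])$ and are not interchangeable: it is the second version that yields $\dot{\tilde\pi}(t) = -\D\lambda(t) + [\lambda(t),\tilde\pi(t)]$ for the gauge action as defined in \eqref{eq:gaugeactiong0}. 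Since you already flag this identity as the crux and propose outsourcing it to Manetti, this does not affect the validity of your plan, but the sign pattern in the Bernoulli series should be fixed if you spell it out.
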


This theorem can be rephrased in a more explicit manner in the following proposition.

\begin{proposition}
\label{prop:HomEquvsGaugeEqu}
  Let $(\liealg{g},\D,[\argument,\argument])$ be a DGLA 
	with complete descending filtration. 
	Consider $\pi_0 \sim \pi_1$ with equivalence given by  
  $\pi(t) \in \mathcal{F}^1\widehat{\liealg{g}^1[t]}$ and 
	$\lambda(t) \in \mathcal{F}^1\widehat{ \liealg{g}^0[t]}$. 
	The formal solution of
	\begin{equation}
		\label{eq:ODEforA}
		\lambda(t)
		=
		\frac{\exp([A(t),\argument])-\id}{[A(t),\argument]} \frac{\D A(t)}{\D t} ,
		\quad\quad
		A(0)
		=
		0
	\end{equation}
	is an element $A(t) \in \mathcal{F}^1\widehat{ \liealg{g}^0[t]}$
	and satisfies
	\begin{equation}
	  \pi(t)
		=
		e^{[A(t),\argument]}\pi_0 
	  - \frac{\exp([A(t),\argument]-\id}{[A(t),\argument]} \D A(t).
	\end{equation}
	In particular, for $g =A(1) \in  \mathcal{F}^1 \liealg{g}^0$ one has 
	\begin{equation}
	  \pi_1
		=
		\exp([g,\argument])\acts \pi_0.
	\end{equation}
\end{proposition}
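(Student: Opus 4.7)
The plan is to build $A(t)$ explicitly from the ODE~\eqref{eq:ODEforA}, define $\tilde\pi(t) := \exp([A(t),\argument])\acts \pi_0$, and verify that $\tilde\pi(t)$ satisfies the same defining equation~\eqref{eq:EquivMCElements} as $\pi(t)$; the uniqueness in Proposition~\ref{prop:PioftUnique} will then force $\tilde\pi(t)=\pi(t)$, and evaluating at $t=1$ with $g=A(1)$ yields the claim. To construct $A(t)$, I would invert~\eqref{eq:ODEforA} to
\[
  \dot A(t) = \frac{[A(t),\argument]}{\exp([A(t),\argument])-\id}\,\lambda(t),
\]
which is well-defined because $x/(e^x-1) = 1 - x/2 + \cdots$ is analytic at $0$. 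Since $\lambda(t)\in \mathcal{F}^1\widehat{\liealg{g}^0[t]}$ and the bracket respects the filtration, a Picard-type iteration modulo $\mathcal{F}^n$, analogous to the proof of Proposition~\ref{prop:PioftUnique}, produces a unique solution $A(t)\in \mathcal{F}^1\widehat{\liealg{g}^0[t]}$ with $A(0)=0$. Using~\eqref{eq:gaugeactiong0} together with $[\pi_0,g] = -[g,\pi_0]$, the gauge action rewrites as $\exp([g,\argument])\acts \pi_0 = \exp([g,\argument])\pi_0 - \frac{\exp([g,\argument])-\id}{[g,\argument]}\D g$, so $\tilde\pi(t)$ agrees with the formula asserted in the proposition; moreover $\tilde\pi(0)=\pi_0$ and $\tilde\pi(t)\in \Mc(\liealg{g})$ for all $t$ by Proposition~\ref{prop:GaugeactionDGLA}.

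Viewing the DGLA as an $L_\infty$-algebra via Example~\ref{ex:DGLAasLinfty}, equation~\eqref{eq:EquivMCElements} reduces for $\pi(t)$ to $\dot\pi(t) = -\D\lambda(t) + [\lambda(t),\pi(t)]$, and the central step is to establish the same equation for $\tilde\pi(t)$. The key tool is the Duhamel-type identity
\[
  \frac{\D}{\D t}\exp([A(t),\argument]) = [\lambda(t),\argument]\circ \exp([A(t),\argument]),
\]
which follows from the integral representation $\lambda(t) = \int_0^1 \exp(s[A(t),\argument])\dot A(t)\,\D s$ combined with the conjugation rule $\exp(s[A,\argument])\,[\dot A,\argument]\,\exp(-s[A,\argument]) = [\exp(s[A,\argument])\dot A,\argument]$. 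Differentiating both summands of $\tilde\pi(t)$ using this identity, together with the commutation relation $[\D,[A,\argument]] = [\D A,\argument]$ (valid because $|A|=0$), the resulting terms regroup to give exactly $\dot{\tilde\pi}(t) = -\D\lambda(t) + [\lambda(t),\tilde\pi(t)]$. By the uniqueness in Proposition~\ref{prop:PioftUnique} we conclude $\tilde\pi(t) = \pi(t)$ for all $t$, and setting $t=1$ completes the proof.

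The delicate part is the derivative computation for the second summand $\frac{\exp([A,\argument])-\id}{[A,\argument]}\D A$ of $\tilde\pi(t)$: both the operator $\sum_{n\geq 0}[A,\argument]^n/(n+1)!$ and its argument $\D A$ depend on $t$, so a direct Leibniz application produces two double sums that must be reorganized by means of $[\D,[A,\argument]] = [\D A,\argument]$ and the defining relation of $\lambda$. The algebra is mechanical but bookkeeping-heavy; one should additionally verify that all series manipulations remain valid inside the completion $\widehat{\liealg{g}[t]}$, which is automatic since every step preserves the filtration.
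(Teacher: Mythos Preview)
Your proposal is correct and follows essentially the same route as the paper: construct $A(t)\in\mathcal{F}^1\widehat{\liealg{g}^0[t]}$ from the ODE by an inductive argument modulo $\mathcal{F}^n$, define $\tilde\pi(t)=\exp([A(t),\argument])\acts\pi_0$, use the Duhamel-type identity $\frac{\D}{\D t}e^{[A(t),\argument]}=[\lambda(t),\argument]\circ e^{[A(t),\argument]}$ to verify that $\tilde\pi$ satisfies the same first-order equation as $\pi$, and conclude by the uniqueness in Proposition~\ref{prop:PioftUnique}. The paper carries out the ``bookkeeping-heavy'' step you flag by first computing $\D\lambda(t)$ as a double sum and then matching it against the corresponding terms in $\dot{\tilde\pi}(t)$, but the strategy is the same as yours.
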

\begin{proof}
As formal power series in $t$ Equation~\ref{eq:ODEforA} has a unique solution
$A(t) \in \mathcal{F}^1 \liealg{g}^0 \otimes \mathbb{K}[[t]]$.
But one has even $A(t)\in \mathcal{F}^1\widehat{\liealg{g}^0[t]}$ 
since
\begin{align*}
  \frac{\D A(t)}{\D t}  &
	\equiv
	\lambda(t) - \sum_{k=1}^{n-2} \frac{1}{(k+1)!} [A(t),\argument]^k 
	\frac{\D A(t)}{\D t} 
	\mod \mathcal{F}^n\liealg{g}[[t]] \\
	& \equiv
	\lambda(t) - \sum_{k=1}^{n-2} \frac{1}{(k+1)!} 
	[A(t)\mod \mathcal{F}^{n-1}\liealg{g}[[t]],\argument]^k 
	\left(\frac{\D A(t)}{\D t} \mod \mathcal{F}^{n-1}\liealg{g}[[t]]\right)
	\mod \mathcal{F}^n\liealg{g}[[t]] 
\end{align*}
is by induction polynomial in $t$.
Note that one has
\begin{equation}
  \tag{$*$}
  \label{eq:DiffofExp}
  \frac{\D}{\D t} e^{[A(t),\argument]}
	=
	\left[\frac{\exp([A(t),\argument]-\id}{[A(t),\argument]}\frac{\D A(t)}{\D t}  , 
	\argument \right] 
	\circ
	\exp([A(t),\argument]).
\end{equation}
Our aim is now to show that 
$\pi'(t) = e^{[A(t),\argument]}\pi_0 
	- \frac{\exp([A(t),\argument]-\id}{[A(t),\argument]} \D A(t)$ 
satisfies
\begin{equation*}
  \frac{\D \pi'(t)}{\D t}
  =
  -\D \lambda(t) + \left[\lambda(t), e^{[A(t),\argument]}\pi_0 
	- \frac{\exp([A(t),\argument])-\id}{[A(t),\argument]} \D A(t)\right].
\end{equation*} 
Then we know $\pi'(t) = \pi(t)\in \mathcal{F}^1\widehat{\liealg{g}^1[t]}$ 
since the solution $\pi(t)$ is unique by 
Proposition~\ref{prop:PioftUnique}, which immediately gives 
$\pi'(1)=\pi_1$. At first we compute
\begin{align*}
  \D \lambda(t)
	& =
	\frac{\exp([A(t),\argument])-\id}{[A(t),\argument]}\D \frac{\D A(t)}{\D t}
	+
	\sum_{k=0}^\infty \sum_{j=0}^{k-1} \frac{1}{(k+1)!} \binom{k}{j+1} 
	\left[ \ad_A^j\D A(t), \ad_A^{k-1-j}  \frac{\D A(t)}{\D t}\right]
\end{align*} 
and using \eqref{eq:DiffofExp} we get
\begin{align*}
  \frac{\D \pi'(t)}{\D t}
	& =
	\left[\frac{\exp([A(t),\argument])-\id}{[A(t),\argument]} \frac{\D A(t)}{\D t}, 
	\exp([A(t),\argument]) \pi_0\right] - 
	\frac{\exp([A(t),\argument])-\id}{[A(t),\argument]}\D \frac{\D A(t)}{\D t}  \\
	& \;
	- \sum_{k=0}^\infty \sum_{j=0}^{k-1} \frac{1}{(k+1)!}\binom{k}{j+1} 
	\left[\ad_A^j \frac{\D A(t)}{\D t}, \ad_A^{k-1-j}\D A\right]  \\
	& =
	-\D \lambda(t) + \left[\lambda(t), e^{[A(t),\argument]}\pi_0 
	- \frac{\exp([A(t),\argument])-\id}{[A(t),\argument]} \D A(t)\right]
\end{align*}
and the proposition is proven.
\end{proof}

\begin{remark}
  There are also different notions of homotopy resp. gauge 
	equivalences for Maurer-Cartan elements in $L_\infty$-algebras: 
	e.g. the above definition, sometimes also called \emph{Quillen homotopy}, and 
	the \emph{gauge homotopy} where one requires $\lambda(t) = \lambda$ to be 
	constant, compare \cite{dolgushev:2007a}. 
	In \cite{dotsenko.poncin:2016a} 
	it is shown that these notions are also equivalent for complete 
	$L_\infty$-algebras, extending the
	result for DGLAs. 
\end{remark}

One important property is that $L_\infty$-morphisms map equivalence classes of 
Maurer-Cartan elements to equivalence classes, 
see \cite[Prop.~4.9]{canonaco:1999a}.

\begin{proposition}
\label{prop:FmapsEquivMCtoEquiv}
  Let $F \colon (L,Q) \rightarrow (L',Q')$ be an $L_\infty$-morphism between 
	$L_\infty$-algebras with complete filtrations, 
	and $\pi_0,\pi_1 \in \Mc(L)$ with $\pi_0\sim\pi_1$ via 
	 $\pi(t) \in \mathcal{F}^1\widehat{\liealg{g}^1[t]}$ and 
	$\lambda(t) \in \mathcal{F}^1\widehat{\liealg{g}^0[t]}$. 
	Then $F$ is compatible with the homotopy equivalence relation, i.e. one has 
	$F^1(\cc{\exp}\pi_0)\sim F^1(\cc{\exp}\pi_1)$ via 
	\begin{equation*}
	  \pi'(t) 
		= 
		F^1(\cc{\exp}(\pi(t))) 
		\quad \text{ and } \quad
		\lambda'(t) 
		= 
		F^{1}(\lambda(t)\vee \exp(\pi(t))).
	\end{equation*}
\end{proposition}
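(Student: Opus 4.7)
The plan is to take the formulas $\pi'(t)=F^1(\cc{\exp}(\pi(t)))$ and $\lambda'(t)=F^1(\lambda(t)\vee\exp(\pi(t)))$ supplied by the statement and verify directly that they satisfy the defining data of a homotopy from $F^1(\cc{\exp}\pi_0)$ to $F^1(\cc{\exp}\pi_1)$. First I would dispatch the easy items: compatibility of $F$ with the filtrations, together with $\pi(t)\in\mathcal{F}^1\widehat{L^1[t]}$ and $\lambda(t)\in\mathcal{F}^1\widehat{L^0[t]}$, guarantees that $\pi'(t)\in\mathcal{F}^1\widehat{L'^1[t]}$ and $\lambda'(t)\in\mathcal{F}^1\widehat{L'^0[t]}$; the boundary values $\pi'(0)=F^1(\cc{\exp}\pi_0)$ and $\pi'(1)=F^1(\cc{\exp}\pi_1)$ follow from evaluating at $t=0,1$. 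By Proposition~\ref{prop:PioftUnique} each $\pi(s)$ is Maurer--Cartan, so by Lemma~\ref{lemma:twistinglinftymorphisms}(iii) each $\pi'(s)$ is Maurer--Cartan as well.

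The main content is the differential equation $\dot\pi'(t)=(Q')^1(\lambda'(t)\vee\exp(\pi'(t)))$. I would establish it via two ``key identities'' in the cofree coalgebras. First, the chain rule for the symmetric exponential gives $\frac{\D}{\D t}\cc{\exp}(\pi(t))=\dot\pi(t)\vee\exp(\pi(t))$; combined with the coderivation property of $Q$ on a product of the form $\lambda\vee\pi^{\vee n}$ and the Maurer--Cartan identity $Q(\cc{\exp}\pi)=0$ from Lemma~\ref{lemma:twistinglinftymorphisms}(i), a straightforward (if bookkeeping-heavy) expansion yields
\begin{equation*}
  Q\bigl(\lambda(t)\vee\exp(\pi(t))\bigr)
  = Q^1\bigl(\lambda(t)\vee\exp(\pi(t))\bigr)\vee\exp(\pi(t))
  = \dot\pi(t)\vee\exp(\pi(t))
  = \tfrac{\D}{\D t}\cc{\exp}(\pi(t)).
\end{equation*}
Second, because $F$ is a coalgebra morphism and $F(\cc{\exp}(\pi))=\cc{\exp}(\pi')$ by Lemma~\ref{lemma:twistinglinftymorphisms}(ii), the analogous ``Leibniz rule'' for $F$ applied to $\lambda\vee\exp(\pi)$ gives $F(\lambda(t)\vee\exp(\pi(t)))=\lambda'(t)\vee\exp(\pi'(t))$.

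Combining these with the intertwining relation $FQ=Q'F$ yields
\begin{equation*}
  Q'\bigl(\lambda'(t)\vee\exp(\pi'(t))\bigr)
  = FQ\bigl(\lambda(t)\vee\exp(\pi(t))\bigr)
  = F\bigl(\tfrac{\D}{\D t}\cc{\exp}(\pi(t))\bigr)
  = \tfrac{\D}{\D t}\cc{\exp}(\pi'(t))
  = \dot\pi'(t)\vee\exp(\pi'(t)),
\end{equation*}
where the $\mathbb{K}[t]$-linearity of $F$ is used to commute $F$ past $\frac{\D}{\D t}$. Projecting onto the cogenerator $L'[1]$ extracts exactly $\dot\pi'(t)=(Q')^1(\lambda'(t)\vee\exp(\pi'(t)))$, which is the required homotopy equation.

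The main obstacle is the first key identity: writing out $Q(\lambda\vee\pi^{\vee n})$ using the coderivation formula produces two families of terms, one in which $\lambda$ is in the ``input'' of some $Q_k$ and one in which $\lambda$ sits outside, and one must reorganise these sums so that the $\lambda$-outside terms cancel against the coefficients produced by $Q(\cc{\exp}\pi)=0$. This is the bookkeeping step; once it is settled, the rest is formal manipulation of coalgebra morphisms.
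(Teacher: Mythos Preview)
Your argument is correct and is precisely the standard one: the paper does not supply its own proof here but simply cites \cite[Prop.~4.9]{canonaco:1999a}, whose proof proceeds exactly along the lines you describe. In particular, your ``first key identity'' $Q(\lambda\vee\exp\pi)=Q^1(\lambda\vee\exp\pi)\vee\exp\pi$ (using $Q(\cc{\exp}\pi)=0$) is exactly what the paper later invokes as \cite[Lemma~4.1]{canonaco:1999a} in the proof of Proposition~\ref{prop:CompofHomotopicHomotopic}, and your ``second key identity'' $F(\lambda\vee\exp\pi)=F^1(\lambda\vee\exp\pi)\vee\exp(F^1(\cc{\exp}\pi))$ is the companion coalgebra-morphism fact used there as well.
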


If $F$ is an $L_\infty$-quasi-isomorphism, then it is well-known that it 
induces a bijection on the equivalence classes of 
Maurer-Cartan elements. Finally, recall that also the twisting with 
Maurer-Cartan elements 
can be generalized to $L_\infty$-algebras, see e.g. 
\cite[Section~2.3]{dolgushev:2006a}.

\begin{lemma} 
Let $(L,Q)$ be an $L_\infty$-algebra and $\pi \in \mathcal{F}^1 L[1]^0$ 
a Maurer-Cartan element. Then the map $Q^\pi$ given by
\begin{equation}
  Q^\pi(X)
	=
	\exp((-\pi)\vee) Q (\exp(\pi \vee)X),
	\quad \quad
	X \in \cc{\Sym}(L[1])
\end{equation}
defines a codifferential on $\cc{\Sym}(L[1])$.
\end{lemma}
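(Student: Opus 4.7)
The strategy is to pass to the counital cofree cocommutative coalgebra $\tilde{S} := \mathbb{K}\cdot 1 \oplus \cc{\Sym}(L[1])$, in which the element $\exp(\pi) = \sum_{n \geq 0} \frac{1}{n!}\pi^{\vee n}$ literally lives. I would first extend the codifferential $Q$ to a coderivation $\tilde{Q}$ on $\tilde{S}$ by the rule $\tilde{Q}(1) := 0$; this extension still has degree $+1$ and satisfies $\tilde{Q}^2 = 0$. The identity $\tilde{\Delta}(\pi^{\vee n}) = \sum_{k=0}^n \binom{n}{k}\pi^{\vee k}\otimes \pi^{\vee(n-k)}$ gives at once $\tilde{\Delta}(\exp(\pi)) = \exp(\pi)\otimes \exp(\pi)$, so $\exp(\pi)$ is a group-like element of $\tilde{S}$.

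Because $\exp(\pi)$ is group-like, left shuffle-multiplication $e_\pi \colon Y \mapsto \exp(\pi) \vee Y$ is a coalgebra automorphism of $\tilde{S}$ whose inverse is $e_{-\pi}$. Conjugation of a coderivation by a coalgebra automorphism yields a coderivation, so $\tilde{Q}^\pi := e_{-\pi} \circ \tilde{Q} \circ e_\pi$ is a degree-$+1$ coderivation on $\tilde{S}$, and $(\tilde{Q}^\pi)^2 = e_{-\pi} \circ \tilde{Q}^2 \circ e_\pi = 0$ follows immediately. Up to this point no use has been made of the Maurer-Cartan hypothesis.

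The Maurer-Cartan condition enters precisely when descending $\tilde{Q}^\pi$ back to the non-counital coalgebra $\cc{\Sym}(L[1])$. I would compute $\tilde{Q}^\pi(1) = e_{-\pi}\bigl(\tilde{Q}(\exp(\pi))\bigr)$; because $\exp(\pi)$ is group-like, the coderivation property forces $\tilde{Q}(\exp(\pi)) = \exp(\pi) \vee v$ for a unique $v \in L[1]^1$, and projecting onto $L[1]$ identifies $v = \sum_{n\geq 1} \frac{1}{n!}Q_n(\pi^{\vee n})$, which vanishes exactly by the Maurer-Cartan equation. Hence $\tilde{Q}^\pi(1) = e_{-\pi}(\exp(\pi)\vee v) = v = 0$. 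A short computation comparing $\tilde{\Delta}$ with the reduced comultiplication on $\cc{\Sym}(L[1])$ then shows that $\tilde{Q}^\pi(1) = 0$ is exactly the condition under which a coderivation on $\tilde{S}$ restricts to a coderivation of the reduced comultiplication; the restriction $Q^\pi$ is therefore a codifferential on $\cc{\Sym}(L[1])$. The one subtle point, and the only place where the Maurer-Cartan hypothesis is used, is this identification of $\tilde{Q}(\exp(\pi))$ with the Maurer-Cartan curvature of $\pi$, which is what forces the conjugation formula to actually descend from the counital to the non-counital coalgebra.
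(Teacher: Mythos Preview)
The paper does not actually supply a proof of this lemma: it is stated as a recalled fact with the citation ``see e.g.\ \cite[Section~2.3]{dolgushev:2006a}'' and no argument is given. So there is nothing in the paper to compare your proposal against directly.

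That said, your argument is correct and is essentially the standard proof one finds in the literature (including the cited Dolgushev reference). Passing to the counital completion $\tilde S=\mathbb{K}\oplus\cc{\Sym}(L[1])$, noting that $\exp(\pi)$ is group-like so that left multiplication $e_\pi$ is a coalgebra automorphism, and then observing that conjugation $\tilde Q^\pi=e_{-\pi}\tilde Q e_\pi$ is automatically a square-zero coderivation of $\tilde\Delta$ is exactly the clean way to do this. Your identification of the obstruction to descending back to the reduced comultiplication $\cc\Delta$ with $\tilde Q^\pi(1)=\sum_{n\ge1}\tfrac{1}{n!}Q_n(\pi^{\vee n})$ is also right: comparing $\tilde\Delta$ and $\cc\Delta$ on $X\in\cc{\Sym}(L[1])$ gives
\[
\cc\Delta(Q^\pi X)-(Q^\pi\otimes\id+\id\otimes Q^\pi)\cc\Delta(X)=\tilde Q^\pi(1)\otimes X+X\otimes\tilde Q^\pi(1),
\]
so the Maurer--Cartan equation is precisely what makes $Q^\pi$ a coderivation of the non-counital coproduct. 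One minor remark: you might note explicitly that $e_{\pm\pi}$ already preserves the augmentation ideal $\cc{\Sym}(L[1])\subset\tilde S$ (since $\epsilon(\exp(\pm\pi)\vee X)=\epsilon(X)=0$ for $X\in\cc{\Sym}(L[1])$), so the restriction of $\tilde Q^\pi$ is well defined as a map before one checks the coderivation property; but this is a small point and your write-up is otherwise complete.
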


One can not only twist the DGLAs resp. $L_\infty$-algebras, but also the 
$L_\infty$-morphisms between them. Below we need the following result, see  
\cite[Prop.~2]{dolgushev:2006a} and \cite[Prop.~1]{dolgushev:2005b}.

\begin{proposition}
  \label{prop:twistinglinftymorphisms}
  Let $F\colon (\liealg{g},Q)\rightarrow (\liealg{g}',Q')$ be an 
	$L_\infty$-morphism of DGLAs, $\pi \in \mathcal{F}^1\liealg{g}^1$ 
	a Maurer-Cartan element and $S = F^1(\cc{\exp}(\pi))
	\in \mathcal{F}^1 \liealg{g}'^1$.
	\begin{propositionlist}					
		\item The map 
		      \begin{equation*}
					  F^\pi
						=
						\exp(-S\vee) F \exp(\pi\vee) \colon 
						\cc{\Sym}(\liealg{g}[1])
						\longrightarrow
						\cc{\Sym}(\liealg{g}'[1])
					\end{equation*}
					defines an $L_\infty$-morphism between the DGLAs 
					$(\liealg{g},\D+[\pi,\,\cdot\,])$ and $(\liealg{g}',\D+[S,\,\cdot\,])$.
					
		\item The structure maps of $F^\pi$ are given by 
		      \begin{equation}
					  \label{eq:twisteslinftymorphism}
					  F_n^\pi(x_1,\dots, x_n)
						=
						\sum_{k=0}^\infty \frac{1}{k!} 
						F_{n+k}(\pi, \dots, \pi,x_1 ,	\dots, x_n).
					\end{equation}
			
		\item Let $F$ be an $L_\infty$-quasi-isomorphism such that 
		      $F_1^1$ is not only a quasi-isomorphism of filtered complexes 
					$L\rightarrow L'$ but even induces a quasi-isomorphism
          \begin{equation*}
            F_1^1 \colon
	          \mathcal{F}^k L
	          \longrightarrow
	          \mathcal{F}^kL'
          \end{equation*}
          for each $k$. Then $F^\pi$ is an $L_\infty$-quasi-isomorphism.	 
	\end{propositionlist}
\end{proposition}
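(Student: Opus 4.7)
The plan is to treat the three assertions separately: (i) by a direct verification of the coalgebra-morphism and codifferential-intertwining properties, (ii) by expanding the definition and projecting onto cogenerators, and (iii) by a filtration argument reducing the quasi-isomorphism property to that of $F_1$.

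For (i), I would write $F^\pi$ as the composition of the three coalgebra morphisms $\exp(\pi\vee)$, $F$, and $\exp(-S\vee)$; the fact that multiplication by $\exp(\pi)$ (and $\exp(-S)$) is a coalgebra morphism follows from $\exp(\pi)$ being group-like in the ambient bialgebra $\Sym(\liealg{g}[1])$. For the codifferential compatibility, the preceding lemma gives $Q^\pi = \exp(-\pi\vee)\,Q\,\exp(\pi\vee)$ and, analogously, $Q'^S = \exp(-S\vee)\,Q'\,\exp(S\vee)$, so the intertwining $FQ = Q'F$ collapses telescopically into
\begin{equation*}
  F^\pi Q^\pi
  = \exp(-S\vee)\,F\,Q\,\exp(\pi\vee)
  = \exp(-S\vee)\,Q'\,F\,\exp(\pi\vee)
  = Q'^S F^\pi.
\end{equation*}
For (ii), I would simply project the definition onto $\liealg{g}'[1]$: since $\exp(-S\vee)$ raises the symmetric degree by at least one (beyond its $\id$ term), the only contribution to $\mathrm{pr}_1$ comes from the identity component of $\exp(-S\vee)$, leaving
\begin{equation*}
  F^\pi_n(x_1,\dots,x_n)
  = \mathrm{pr}_1\, F\Bigl(\sum_{k\geq 0}\tfrac{1}{k!}\,\pi^{\vee k}\vee x_1\vee\cdots\vee x_n\Bigr)
  = \sum_{k\geq 0}\tfrac{1}{k!}\,F_{n+k}(\pi,\dots,\pi,x_1,\dots,x_n),
\end{equation*}
where the convergence of the series is ensured by $\pi\in\mathcal{F}^1\liealg{g}$ and completeness.

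The main obstacle lies in (iii). From (ii) the structure map reads $F^\pi_1 = F_1 + \sum_{k\geq 1}\tfrac{1}{k!}F_{1+k}(\pi^{\vee k}\vee\argument)$, and since $\pi\in\mathcal{F}^1\liealg{g}$ and $F$ is compatible with the filtrations, every correction term sends $\mathcal{F}^\ell\liealg{g}$ into $\mathcal{F}^{\ell+1}\liealg{g}'$. Hence $F^\pi_1$ preserves the filtration and, on each associated graded piece $\mathcal{F}^\ell/\mathcal{F}^{\ell+1}$, it coincides with $F_1$. The hypothesis that $F_1$ is a quasi-isomorphism on every $\mathcal{F}^\ell$ then transfers, via the five lemma applied to the short exact sequences $0\to\mathcal{F}^{\ell+1}\to\mathcal{F}^\ell\to\mathcal{F}^\ell/\mathcal{F}^{\ell+1}\to 0$, into the same property on each quotient, hence also for $F^\pi_1$. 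A standard convergence argument for complete and exhaustive filtered complexes then lifts this to the whole of $\liealg{g}$; this last convergence step is the delicate point and is precisely where the strengthened filtration hypothesis on $F_1$ is indispensable.
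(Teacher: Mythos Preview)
The paper does not actually prove this proposition: it is stated as a background result with a reference to \cite[Prop.~2]{dolgushev:2006a} and \cite[Prop.~1]{dolgushev:2005b}, and no argument is supplied. So there is no proof in the paper to compare against; your sketch is essentially the standard argument one finds in those references.

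A few comments on the sketch itself. For (i), your claim that $\exp(\pi\vee)$ is a coalgebra morphism because $\exp(\pi)$ is group-like is correct in the counital symmetric algebra, but the paper works in the \emph{reduced} coalgebra $\cc{\Sym}(\liealg{g}[1])$, where the coproduct has the primitive terms stripped off. The computation that $F^\pi$ is nonetheless a morphism of the reduced coalgebra structure needs the identity $F(\cc{\exp}(\pi))=\cc{\exp}(S)$ from Lemma~\ref{lemma:twistinglinftymorphisms} to make the surplus terms cancel; you should make this explicit rather than leave it implicit in the word ``group-like''. Parts (ii) and (iii) are fine: in (iii) you correctly observe that both $F^\pi_1-F_1$ and the twist $[\pi,\argument]$ of the differential raise filtration, so on the associated graded one is comparing $(F_1,\D,\D')$, and the Eilenberg--Moore comparison theorem for complete exhaustive filtrations is exactly the ``convergence argument'' you allude to. That is indeed where the hypothesis that $F_1^1$ be a quasi-isomorphism on every $\mathcal{F}^k$ (and not merely on the whole complex) is used.
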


\section{Relation between Twisted Morphisms}
\label{section:RelbetTwistedMorphisms}

Here we prove the main results about the relation 
between twisted $L_\infty$-morphisms. 
More explicitly, consider an $L_\infty$-morphism
$F \colon (\liealg{g},Q) \rightarrow (\liealg{g}',Q')$ 
between DGLAs and let $\pi_0,\pi_1 \in \mathcal{F}^1\liealg{g}^1$ be two 
equivalent Maurer-Cartan elements via 
$\pi_1 = \exp([g,\argument])\acts \pi_0$. We show that 
$F^{\pi_0}$ and $F^{\pi_1}$ can be interpreted as homotopic in the sense of 
\cite[Definition~3]{dolgushev:2007a}. 

\subsection{$L_\infty$-morphisms as Maurer-Cartan Elements}
At first, recall that 
we can interpret $L_\infty$-morphisms as Maurer-Cartan elements in 
the convolution algebra. More explicitly, let $(L,Q),(L',Q')$ be two 
$L_\infty$-algebras and denote the graded linear maps 
by $\Hom(\cc{\Sym}(L[1]),L')$. If $L$ and $L'$ are equipped with complete descending 
filtrations, then we require the maps to be compatible with the filtration. The 
$L_\infty$-structures on $L$ and $L'$ lead to an $L_\infty$-structure on this 
vector space of maps, see \cite[Proposition~1 and Proposition~2]{dolgushev:2007a} 
and also \cite{bursztyn.dolgushev.waldmann:2012a} for the case of DGLAs.

\begin{proposition}
	The coalgebra $\cc{\Sym}(\Hom(\cc{\Sym}(L[1]),L')[1])$ can be equipped with 
	a codifferential $\widehat{Q}$ with structure maps
  \begin{equation}
	  \label{eq:DiffonConvLinfty}
	  \widehat{Q}^1_1 F
		=
		Q'^1_1 \circ F - (-1)^{\abs{F}} F \circ Q
	\end{equation}
	and 
	\begin{equation}
		\label{eq:BracketonConvLinfty}
		\widehat{Q}^1_n(F_1\vee \cdots \vee F_n)
		=
		(Q')^1_n \circ \vee^{n-1}\circ
		(F_1\otimes F_2\otimes \cdots \otimes F_n) \circ \cc{\Delta}^{n-1}.
	\end{equation}
	It is called \emph{convolution $L_\infty$-algebra} and its 
	Maurer-Cartan elements are identified with $L_\infty$-morphisms. 
	Here $\abs{F}$ denotes the degree in 
	$\Hom(\cc{\Sym}(L[1]),L')[1]$. 
\end{proposition}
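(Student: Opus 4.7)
The plan is to handle the two assertions separately, both by invoking the universal property of the cofree cocommutative coalgebra. Abbreviate $V := \Hom(\cc{\Sym}(L[1]),L')$. Because $\cc{\Sym}(V[1])$ is cofree cocommutative (without counit), any collection of degree $+1$ maps $\widehat{Q}^1_n\colon \Sym^n V[1] \to V[2]$ extends uniquely to a coderivation $\widehat{Q}$ on $\cc{\Sym}(V[1])$; dually, any degree $0$ map $F\colon \cc{\Sym}(L[1])\to L'[1]$ extends uniquely to a coalgebra morphism $\bar F\colon \cc{\Sym}(L[1])\to\cc{\Sym}(L'[1])$ whose $k$-th Taylor component is $\frac{1}{k!}\vee^{k-1}\circ F^{\otimes k}\circ \cc{\Delta}^{k-1}$. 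These two universal properties reduce every verification below to a check on cogenerators.

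For the codifferential property, $\widehat{Q}^2$ is itself a coderivation on $\cc{\Sym}(V[1])$, hence vanishes if and only if its corestriction does, i.e.\ if and only if $(\widehat{Q}\circ\widehat{Q})^1_n = 0$ as a map $\Sym^n V[1]\to V[2]$ for every $n\geq 1$. Expanding via the coderivation extension and substituting \eqref{eq:DiffonConvLinfty}--\eqref{eq:BracketonConvLinfty} splits the identity into three types of contributions: (i) pure $Q'$-terms, which cancel by $(Q')^2=0$ combined with coassociativity and cocommutativity of the iterated coproduct $\cc{\Delta}^{n-1}$; (ii) pure $Q$-terms, which cancel by $Q^2=0$ on $\cc{\Sym}(L[1])$; and (iii) mixed terms with one factor from $Q$ and one from $Q'$, which pair up and cancel because $Q$ is itself a coderivation of $\cc{\Sym}(L[1])$, so that $\cc{\Delta}^{n-1}\circ Q$ can be rewritten via the iterated Leibniz rule and matched against the Leibniz expansion of $\widehat{Q}^k_n$. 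This combinatorial bookkeeping, with its Koszul signs and nested coproducts, is the main technical obstacle, although conceptually it is just a repackaging of the convolution construction for two (co)operads.

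For the Maurer-Cartan correspondence, an element $F\in \mathcal{F}^1 V[1]^0$ is the same datum as a filtration-preserving degree $0$ map $F\colon \cc{\Sym}(L[1])\to L'[1]$; let $\bar F$ be its unique coalgebra extension as above. Unpacking the Maurer-Cartan equation
\begin{equation*}
\widehat{Q}^1_1(F) + \sum_{n\geq 2}\frac{1}{n!}\,\widehat{Q}^1_n(F\vee\cdots\vee F) \;=\; 0
\end{equation*}
and substituting \eqref{eq:DiffonConvLinfty}--\eqref{eq:BracketonConvLinfty} yields
\begin{equation*}
\sum_{n\geq 1}\frac{1}{n!}\,(Q')^1_n \circ \vee^{n-1}\circ F^{\otimes n} \circ \cc{\Delta}^{n-1} \;=\; F\circ Q,
\end{equation*}
which is exactly the corestriction to $L'[1]$ of the intertwining relation $Q'\circ\bar F - \bar F\circ Q = 0$. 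Since $Q'\bar F - \bar F Q$ is an $\bar F$-coderivation and such coderivations are determined by their corestriction, the projected identity is equivalent to $Q'\bar F = \bar F Q$, i.e.\ to $\bar F$ being an $L_\infty$-morphism. Running this equivalence in both directions yields the claimed bijection and completes the proof.
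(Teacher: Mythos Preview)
Your proposal is sound: reducing both claims to statements about corestrictions via cofreeness is exactly the right strategy, and your unpacking of the Maurer-Cartan equation into the projected intertwining relation $Q'\bar F=\bar F Q$ is correct. The sketch for $\widehat{Q}^2=0$ identifies the right three families of terms and the right reasons for their cancellation; you are honest that the sign and shuffle bookkeeping is where the work sits, and nothing in your outline is wrong.

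The paper, however, does not prove this proposition at all: it simply cites \cite[Proposition~1 and Proposition~2]{dolgushev:2007a} (and \cite{bursztyn.dolgushev.waldmann:2012a} for the DGLA case) immediately before stating it, and moves on. So there is no argument in the paper to compare yours against; your proposal supplies a self-contained justification where the paper defers to the literature. If you want to match the paper's treatment, a one-line reference suffices; if you want an actual proof, what you wrote is the standard route and only needs the combinatorial verification in part~(i)--(iii) spelled out in full.
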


\begin{example}
  Let $\liealg{g},\liealg{g}'$ be two DGLAs. Then 
	$\Hom(\cc{\Sym}(\liealg{g}[1]),	\liealg{g}')$ is in fact a DGLA 
	with differential
	\begin{equation}
	  \label{eq:DiffonConvDGLA}
	  \del F
		=
		\D' \circ F + (-1)^{\abs{F}} F \circ Q
	\end{equation}
	and Lie bracket
	\begin{equation}
		\label{eq:BracketonConvDGLA}
		[F,G]
		=		
		- (-1)^{\abs{F}} (Q')^1_2 \circ (F\otimes G) \circ \cc{\Delta}.
	\end{equation}
	Here $\abs{F}$ denotes again the degree in 
	$\Hom(\cc{\Sym}(\liealg{g}[1]),\liealg{g}')[1]$. This DGLA is also called 
	\emph{convolution DGLA}.
\end{example}

We note that the convolution $L_\infty$-algebra 
$\mathcal{H} = \Hom(\cc{\Sym}(L[1]),L')$ is equipped with the following 
complete descending filtration:
\begin{align}
  \label{eq:FiltrationConvLieAlg}
  \begin{split}
  \mathcal{H}
	=
	& \mathcal{F}^1\mathcal{H}
	\supset
	\mathcal{F}^2\mathcal{H}
	\supset \cdots \supset
	\mathcal{F}^k\mathcal{H}
	\supset \cdots \\
	\mathcal{F}^k\mathcal{H}
	& =
	\left\{ f \in \Hom(\cc{\Sym}(L[1]),L') \mid 
	f \at{\Sym^{<k}(L[1])}=0\right\}.
	\end{split}
\end{align}
Thus all twisting procedures are well-defined and 
one can define a notion of homotopic $L_\infty$-morphisms.

\begin{definition}
  \label{def:homotopicMorph}
  Two $L_\infty$-morphisms $F,F'$ from $(L,Q)$ to $(L',Q')$ 
	are called \emph{homotopic} if they are homotopy equivalent Maurer-Cartan 
	elements in the convolution $L_\infty$-algebra $\mathcal{H}$. 
\end{definition}

We collect a few immediate consequences: 

\begin{proposition}
\label{prop:PropertiesofHomotopicMorphisms}
  Let $F,F'$ be two homotopic $L_\infty$-morphisms 
	from $(L,Q)$ to $(L',Q')$.
	\begin{propositionlist}
		\item $F_1^1$ and $(F')_1^1$ are chain homotopic.
		\item If $F$ is an $L_\infty$-quasi-isomorphism, then so is $F'$.
		\item If $L=\liealg{g},L'=\liealg{g}'$ are two DGLAs equipped with 
		      complete descending 
		      filtrations, then $F$ and $F'$ induce the same maps from 
					$\Def(\liealg{g})$ to $\Def(\liealg{g}')$.
		\item In the case of DGLAs $\liealg{g},\liealg{g}'$, compositions 
		      of homotopic $L_\infty$-morphisms with a 
		      DGLA morphism of degree zero are again homotopic.
	\end{propositionlist}
\end{proposition}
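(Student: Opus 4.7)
All four items will be deduced from a single piece of data. By Definition~\ref{def:homotopicMorph} there exist $\Phi(t)\in\mathcal{F}^1\widehat{\mathcal{H}^1[t]}$ and $\Lambda(t)\in\mathcal{F}^1\widehat{\mathcal{H}^0[t]}$ in the convolution $L_\infty$-algebra $\mathcal{H}=\Hom(\cc{\Sym}(L[1]),L')$ satisfying the homotopy ODE~\eqref{eq:EquivMCElements} with $\Phi(0)=F$ and $\Phi(1)=F'$. For~(i) I would restrict both sides of this ODE to the first Taylor component, i.e.\ to $\Sym^1(L[1])=L[1]$. By~\eqref{eq:BracketonConvLinfty} every higher bracket $\widehat{Q}^1_n$, $n\geq 2$, factors through $\cc{\Delta}^{n-1}$, which annihilates primitive elements, so on $L[1]$ only $\widehat{Q}^1_1(\Lambda(t))$ survives. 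Together with~\eqref{eq:DiffonConvLinfty} this reduces the ODE to $\frac{\D}{\D t}\Phi(t)^1_1 = (Q')^1_1\circ\Lambda(t)^1_1 \pm \Lambda(t)^1_1\circ Q^1_1$, whose time-integral over $[0,1]$ exhibits $(F')^1_1-F^1_1$ as the coboundary of the chain homotopy $h=\int_0^1\Lambda(t)^1_1\,\D t$. Thus~(i) holds, and~(ii) follows at once since any chain map homotopic to a quasi-isomorphism is itself a quasi-isomorphism.

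For~(iii) I would evaluate the same ODE at the group-like element $\cc{\exp}(\pi)$ associated to a fixed $\pi\in\Mc(\liealg{g})$, and set $\pi'(t)=\Phi(t)^1(\cc{\exp}(\pi))$ and $\lambda'(t)=\Lambda(t)^1(\cc{\exp}(\pi))$. By Lemma~\ref{lemma:twistinglinftymorphisms} the endpoints of $\pi'(t)$ are exactly the images of $\pi$ under $F$ and $F'$, and both $\pi'(t),\lambda'(t)$ lie in the correct completed filtered pieces because $\Phi(t),\Lambda(t)$ do. Using that $\cc{\exp}(\pi)$ is group-like, so that iterated reduced coproducts of $\cc{\exp}(\pi)$ produce only symmetric powers of $\pi$, the higher brackets of~\eqref{eq:BracketonConvLinfty} reorganise upon evaluation at $\cc{\exp}(\pi)$ into $(Q')^1_{n+1}\bigl(\lambda'(t)\vee \pi'(t)^{\vee n}\bigr)$, which is precisely the $L_\infty$-homotopy ODE for $\pi'(t),\lambda'(t)$ in $\liealg{g}'$. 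Hence $F^1(\cc{\exp}(\pi))\sim(F')^1(\cc{\exp}(\pi))$ in $\Mc(\liealg{g}')$, and $F$ and $F'$ descend to the same map $\Def(\liealg{g})\to\Def(\liealg{g}')$.

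For~(iv) I would use that a DGLA morphism $G$ of degree zero prolongs uniquely to a strict coalgebra morphism $\tilde G$ intertwining the codifferentials. Post-composing $\Phi(t),\Lambda(t)$ with $\tilde G\colon\cc{\Sym}(\liealg{g}'[1])\to\cc{\Sym}(\liealg{g}''[1])$, or pre-composing on the other side with the lift of a DGLA morphism into $\liealg{g}$, yields new data in the relevant convolution $L_\infty$-algebra that still satisfies the homotopy ODE, because $\tilde G$ strictly commutes with the structure maps appearing in $\widehat{Q}^1_n$ and in~\eqref{eq:DiffonConvLinfty}. The main technical obstacle I anticipate lies in~(iii): the combinatorial bookkeeping that rewrites the convolution brackets $\widehat{Q}^1_n$ from~\eqref{eq:BracketonConvLinfty}, evaluated at the group-like $\cc{\exp}(\pi)$, as the target $L_\infty$-brackets on $\lambda'(t)\vee\exp(\pi'(t))$, with the correct Koszul signs and with justification that all infinite sums converge in the respective complete filtered topologies. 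Once this reorganisation is in hand, items~(i),~(ii) and~(iv) are essentially formal consequences.
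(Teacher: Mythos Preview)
Your argument is correct. The paper's own proof of this proposition is a one-line citation of \cite{bursztyn.dolgushev.waldmann:2012a} for items (i)--(iii) together with the remark that (iv) ``follows directly'', so you are supplying precisely the details that the paper delegates to that reference; there is no genuine divergence in mathematical strategy, only in level of detail.

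Two minor points are worth tightening. First, Definition~\ref{def:homotopicMorph} takes the \emph{transitive closure} of the one-step relation, so your data $(\Phi(t),\Lambda(t))$ a priori cover a single step of a possible chain; each of your four conclusions is manifestly stable under concatenation, so this causes no trouble, but it should be said. Second, in (iv) for the post-composition you want to compose with $G\colon\liealg{g}'\to\liealg{g}''$ itself rather than with its coalgebra lift $\tilde G$, since $\Phi(t),\Lambda(t)$ already take values in $\liealg{g}'$; because $G$ is a strict DGLA morphism of degree zero one has $G\circ(Q')^1_1=(Q'')^1_1\circ G$ and $G\circ(Q')^1_2=(Q'')^1_2\circ(G\otimes G)$, whence $G\circ\widehat{Q}^1_n(\Lambda\vee\Phi^{\vee(n-1)})=\widehat{Q}''{}^1_n(G\Lambda\vee(G\Phi)^{\vee(n-1)})$ and the ODE transports exactly as you claim. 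For the pre-composition your use of the coalgebra lift $\tilde H$ is the correct one. The filtration check you flag in (iii)---that $\pi'(t),\lambda'(t)$ lie in $\mathcal{F}^1\widehat{\liealg{g}'^\bullet[t]}$---goes through because the maps in $\mathcal{H}$ are by convention filtration-compatible and $\pi\in\mathcal{F}^1\liealg{g}$: the $k$-th Taylor component $\Phi(t)_k$ is polynomial in $t$ (from $\Phi(t)\bmod\mathcal{F}^{k+1}\mathcal{H}$) and sends $\pi^{\vee k}$ into $\mathcal{F}^k\liealg{g}'$, so the tail of $\pi'(t)$ from symmetric degree $\geq n$ lies in $\mathcal{F}^n\liealg{g}'$ and $\pi'(t)\bmod\mathcal{F}^n$ is polynomial in $t$.
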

\begin{proof}
The first three points are proven in \cite{bursztyn.dolgushev.waldmann:2012a} 
and the last one follows directly.
\end{proof}

We now aim to generalize the last point of the previous proposition to compositions with 
$L_\infty$-morphisms. We start with the post-composition:

\begin{proposition}
  \label{prop:CompofHomotopicHomotopic}
  Let $F_0,F_1$ be two homotopic $L_\infty$-morphisms 
	from $(L,Q)$ to $(L',Q')$. Let 
	$H$ be an $L_\infty$-morphism from $(L',Q')$ to 
	$(L'',Q'')$, then $HF_0 \sim HF_1$. 
\end{proposition}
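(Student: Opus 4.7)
I would deduce this from the general principle that post-composition with $H$ is realized by an $L_\infty$-morphism at the level of convolution DGLAs, and then apply Proposition~\ref{prop:FmapsEquivMCtoEquiv}.

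Concretely, let $\mathcal{H} := \Hom(\cc{\Sym}(L[1]), L')$ and $\mathcal{H}' := \Hom(\cc{\Sym}(L[1]), L'')$ be the convolution DGLAs defined via \eqref{eq:DiffonConvDGLA}--\eqref{eq:BracketonConvDGLA}, both complete with respect to the filtration \eqref{eq:FiltrationConvLieAlg}. Their Maurer--Cartan elements are in bijection with $L_\infty$-morphisms $L \to L'$ and $L \to L''$, respectively, and by assumption $F_0, F_1$ are homotopy equivalent MC elements of $\mathcal{H}$. I would introduce the candidate push-forward $H_* \colon \mathcal{H} \to \mathcal{H}'$ by its Taylor components
\begin{equation*}
(H_*)_n(f_1 \vee \cdots \vee f_n) := H_n \circ \vee^{n-1} \circ (f_1 \otimes \cdots \otimes f_n) \circ \cc{\Delta}^{n-1},
\end{equation*}
where $H_n \colon \Sym^n(L'[1]) \to L''[1]$ are the Taylor components of $H$. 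Filtration compatibility is immediate from \eqref{eq:FiltrationConvLieAlg}, and the cofree extension formula identifies
\begin{equation*}
H_*^1(\cc{\exp} F) = \sum_{n \geq 1} \frac{1}{n!} (H_*)_n(F, \ldots, F) = H^1 \circ \widetilde{F} = (H \widetilde{F})^1,
\end{equation*}
where $\widetilde{F} \colon \cc{\Sym}(L[1]) \to \cc{\Sym}(L'[1])$ is the coalgebra lift of the MC element $F \in \mathcal{F}^1 \mathcal{H}^1$. In words, $H_*$ sends the MC element encoding $F$ to the one encoding the composition $HF$.

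The crucial step is to verify that $H_*$ is an $L_\infty$-morphism of DGLAs, which amounts to translating the coalgebra identity $Q'' \circ H = H \circ Q'$ (equivalently, the $L_\infty$-morphism equations for the components $H_n$) into the corresponding identity for $H_*$ against $\widehat{Q}$ and $\widehat{Q}'$. This is a combinatorial matter: one expands the convolution brackets using \eqref{eq:DiffonConvDGLA}--\eqref{eq:BracketonConvDGLA}, exploits coassociativity of $\cc{\Delta}$ and the symmetry of $\vee$, and collects terms. I expect this step to be the main technical obstacle since the sign bookkeeping is non-trivial; conceptually, however, it is simply the functoriality of the convolution construction in its second argument, and after inserting an MC element $F$ it reduces to the already known fact that the composition $HF$ is itself an $L_\infty$-morphism.

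Granting this, the proof concludes by applying Proposition~\ref{prop:FmapsEquivMCtoEquiv} to $H_*$ and the homotopy equivalent MC elements $F_0, F_1 \in \mathcal{H}$: if $F_0 \sim F_1$ is witnessed by $F(t) \in \mathcal{F}^1 \widehat{\mathcal{H}^1[t]}$ and $\Lambda(t) \in \mathcal{F}^1 \widehat{\mathcal{H}^0[t]}$, then the proposition yields an explicit homotopy
\begin{equation*}
F'(t) = H_*^1(\cc{\exp} F(t)) = (H F(t))^1, \qquad \Lambda'(t) = H_*^1(\Lambda(t) \vee \exp F(t)),
\end{equation*}
between $(H F_0)^1$ and $(H F_1)^1$ in $\mathcal{H}'$. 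By Definition~\ref{def:homotopicMorph} this is precisely the assertion that $HF_0$ and $HF_1$ are homotopic $L_\infty$-morphisms.
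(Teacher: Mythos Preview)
Your plan is correct and yields exactly the same explicit homotopy as the paper, but you package the argument differently. The paper never states that $H_*$ is an $L_\infty$-morphism of convolution algebras; instead it defines $\widehat{H}F = H^1\circ\cc{\exp}F\circ\cc{\Delta}^\bullet$ (your $H_*^1(\cc{\exp}F)$), differentiates $\widehat{H}F(t)$ directly, and then performs the shuffle combinatorics needed to recognise the result as $(\widehat{Q}')^1\bigl(\lambda'(t)\vee\exp(\widehat{H}F(t))\bigr)$ with $\lambda'(t)=H^1\circ(\lambda(t)\vee\exp F(t))\circ\cc{\Delta}^\bullet$, i.e.\ your $H_*^1(\lambda(t)\vee\exp F(t))$. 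In effect the paper carries out the specialisation of Proposition~\ref{prop:FmapsEquivMCtoEquiv} to $H_*$ by hand, exploiting along the way that $F(t)$ is Maurer--Cartan (so that $\widehat{Q}(\lambda\vee\exp F)=\exp F\vee\widehat{Q}^1(\lambda\vee\exp F)$), whereas you propose to prove once and for all that $H_*$ is an $L_\infty$-morphism and then invoke that proposition. Your route isolates a reusable functoriality statement for the convolution construction; the paper's route is shorter because verifying the ODE only at Maurer--Cartan paths requires less than the full $L_\infty$-morphism identities for $H_*$. Either way the technical core is the same shuffle computation.

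Two minor points. First, the proposition is stated for arbitrary $L_\infty$-algebras $(L',Q')$ and $(L'',Q'')$, so $\mathcal{H}$ and $\mathcal{H}'$ are in general convolution $L_\infty$-algebras rather than DGLAs; you should cite \eqref{eq:DiffonConvLinfty}--\eqref{eq:BracketonConvLinfty} instead of the DGLA formulas, but Proposition~\ref{prop:FmapsEquivMCtoEquiv} still applies. Second, you correctly flag the verification that $H_*$ is an $L_\infty$-morphism as the step needing work; this is where the combinatorics you allude to actually live, and it is essentially the same unshuffle computation the paper writes out explicitly.
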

\begin{proof}
For $F\in \Hom(\cc{\Sym}(L[1]),L')$ we define $\widehat{H}(F)$ via
\begin{equation*}
  (\widehat{H}(F))_n
	=
  (HF)^1_n
	=
	\sum_{\ell=1}^n
	H^1_\ell F^\ell_n
	=
	H^1_\ell \left(\frac{1}{\ell !} F^1\vee \cdots \vee F^1\right) \circ 
	\cc{\Delta}^{\ell -1}.
\end{equation*}
Here the $\vee$-product of maps is given by 
$F\vee G = \vee \circ (F\otimes G) \colon \cc{\Sym}(L[1])\otimes 
\cc{\Sym}(L[1]) \rightarrow \cc{\Sym}(L'[1])$. 
Writing $\cc{\Delta}^\bullet = \sum_{k=0}^\infty \cc{\Delta}^k$ and 
defining all maps to be zero on the domains on which they 
where previously not defined, we can rewrite this as
\begin{equation*}
  \widehat{H}F
	=
	H^1 \circ \cc{\exp} F \circ \cc{\Delta}^\bullet.
\end{equation*}
Let $F(t) \in \widehat{(\Hom(\cc{\Sym}
(L[1]),L')[1])^0[t]}$ and $\lambda(t) \in 
\widehat{(\Hom(\cc{\Sym}(L[1]),L')[1])^{-1}[t]}$ 
describe the homotopy equivalence between $F_0$ and $F_1$. Then 
$\widehat{H}F(t) \in 
\widehat{(\Hom(\cc{\Sym}(L[1]),L'')[1])^{-1}[t]}$ 
satisfies
\begin{align*}
  \frac{\D}{\D t} \widehat{H}F(t)
	& =
	\sum_{\ell =1}^\infty H^1_\ell \frac{\D}{\D t} 
	\left(\frac{1}{\ell !} F(t) \vee \cdots \vee F(t)\right) \circ 
	\cc{\Delta}^{l-1}  \\
	& =
	H^1 \circ \left( 
	\widehat{Q}^1(\lambda(t)\vee \exp(F(t))\vee \exp(F(t)) \right) \circ 
	\cc{\Delta}^\bullet.
\end{align*}
As in \cite[Lemma~4.1]{canonaco:1999a} one can check 
\begin{align*}
  \widehat{Q}(\lambda(t) \vee \exp(F(t)))
	& = 
	\exp(F(t)) \vee \widehat{Q}^1(\lambda(t)\vee \exp(F(t)))
	- \lambda(t)\vee \exp(F(t)) \vee \widehat{Q}^1(\exp(F(t))) \\
	& =
	\exp(F(t)) \vee \widehat{Q}^1(\lambda(t)\vee \exp(F(t)))
\end{align*}
since $F(t)$ is a Maurer-Cartan element.
This allows us to compute
\begin{align*}
  \frac{\D}{\D t} \widehat{H}F(t)
	& =
	H^1 \circ \left( 
	\widehat{Q}(\lambda(t) \vee \exp(F(t))) \right) \circ 
	\cc{\Delta}^\bullet \\\
	& =
	H^1 \circ Q' \circ
	(\lambda(t) \vee \exp(F(t)))  \circ 
	\cc{\Delta}^\bullet 
	+
	H^1\circ(\lambda(t) \vee \exp(F(t)))  \circ 
	\cc{\Delta}^\bullet \circ Q  \\
	& =
	(Q'')^1 \circ H \circ
	(\lambda(t) \vee \exp(F(t)))  \circ 
	\cc{\Delta}^\bullet 
	+
	H^1\circ(\lambda(t) \vee \exp(F(t)))  \circ 
	\cc{\Delta}^\bullet \circ Q  \\
	& =
	(\widehat{Q}')^1_1 \left( H^1 \circ
	(\lambda(t) \vee \exp(F(t))) 
	\circ 
	\cc{\Delta}^\bullet \right) 
	+ \sum_{\ell=2}^\infty (Q'')^1_\ell \circ H^\ell \circ
	(\lambda(t) \vee \exp(F(t)))  \circ 
	\cc{\Delta}^\bullet  .
\end{align*}
Concerning the last term we have omitting the $t$-dependency since $F$ and 
$H$ are of degree zero
\begin{align*}
  & \frac{1}{k!} H^\ell_{k+1}  \circ
	(\lambda(t) \vee F(t)\vee \cdots \vee F(t))  \circ 
	\cc{\Delta}^k (X)\\
	& =
	\frac{1}{k!\ell!} 
	(H^1 \vee \cdots \vee H^1)\circ \cc{\Delta}^{\ell -1}
	\circ
	(\lambda(t) \vee F(t)\vee \cdots \vee F(t))  \circ 
	\cc{\Delta}^k (X) \\
  & =
	\frac{1}{k!\ell!} 
	(H^1 \vee \cdots \vee H^1)\circ
	\sum_{\substack{i_1 + \cdots + i_\ell=k+1 \\ i_j\geq 1}} 
		\sum_{\substack{\sigma\in Sh(i_1,\dots,i_\ell)}}
	\sigma \racts \left( 
	(\lambda(t) \vee F(t)\vee \cdots \vee F(t))  \circ 
	\cc{\Delta}^k	(X)\right)  \\
	& =
	\frac{\ell}{k!\ell!} 
	(H^1 \vee \cdots \vee H^1)\circ 
	\sum_{\substack{i_1 + \cdots + i_\ell=k+1 \\ i_j\geq 1}} 
		\sum_{\substack{\sigma\in Sh(i_1,\dots,i_\ell) \\ \sigma(1)=1}}
	\sigma \racts \left( 
	(\lambda(t) \vee F(t)\vee \cdots \vee F(t))  \circ 
	\cc{\Delta}^k	(X)\right)  \\
	& =
	\frac{1}{(\ell-1)!}
	\sum_{i_1 + \cdots + i_\ell=k+1, i_j\geq 1} 
	\Big(
	\frac{1}{(i_1-1)!} H^1_{i_1}(\lambda \vee F\cdots\vee F) \circ 
	\cc{\Delta}^{i_1-1} \vee 
	\frac{1}{i_2!} 
	H^1_{i_2}(F \vee \cdots\vee F) \circ 
	\cc{\Delta}^{i_2-1}\\
	& \quad \vee\cdots   \vee 
	\frac{1}{i_\ell !} 
	H^1_{i_\ell}(F \vee \cdots\vee F) \circ 
	\cc{\Delta}^{i_\ell-1} 
	\Big) \circ \cc{\Delta}^{\ell -1} (X).
\end{align*}
Here we wrote
\begin{equation*}
  \sigma \racts (x_1\vee \cdots \vee x_{k+1})
	=
	\epsilon(\sigma)
	x_{\sigma(1)}\vee \dots\vee x_{\sigma(i_1)} \otimes  
	 \cdots  \otimes x_{\sigma(k+1-i_\ell+1)}\vee \cdots \vee 
	x_{\sigma(n)}
\end{equation*}
with Koszul sign $\epsilon(\sigma)$. Therefore, it follows
\begin{align*}
  \frac{\D}{\D t} \widehat{H}F(t)
	& =
	(\widehat{Q}')^1_1 \left( H^1 \circ
	(\lambda(t) \vee \exp(F(t))) 
	\circ 
	\cc{\Delta}^\bullet \right) \\ 
	& \quad \quad+ \sum_{\ell=2}^\infty(\widehat{Q}')^1_\ell \circ 
	\left((H^1\circ(\lambda(t)\vee \exp F)\circ \cc{\Delta}^\bullet) \vee 
	\exp(\widehat{H}F)\right)
\end{align*}
and the statement is shown.
\end{proof}

Analogously, we have for the pre-composition:

\begin{proposition}
  \label{prop:preCompofHomotopicHomotopic}
  Let $F_0,F_1$ be two homotopic $L_\infty$-morphisms 
	from $(L,Q)$ to $(L',Q')$. Let 
	$H$ be an $L_\infty$-morphism from $(L'',Q'')$ to 
	$(L,Q)$, then $F_0 H\sim F_1H$. 
\end{proposition}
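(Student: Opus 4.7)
The plan is to observe that, in contrast to the post-composition case treated in Proposition~\ref{prop:CompofHomotopicHomotopic}, pre-composition with an $L_\infty$-morphism $H$ defines a \emph{strict} morphism of convolution $L_\infty$-algebras. The claim then reduces to a direct application of Proposition~\ref{prop:FmapsEquivMCtoEquiv}, which makes this direction conceptually simpler than the combinatorial computation carried out for post-composition.

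Concretely, I would set $\mathcal{H} = \Hom(\cc{\Sym}(L[1]), L')$ and $\mathcal{H}'' = \Hom(\cc{\Sym}(L''[1]), L')$, equipped with their convolution $L_\infty$-structures $\widehat{Q}$ and $\widehat{Q}''$ from \eqref{eq:DiffonConvLinfty}--\eqref{eq:BracketonConvLinfty}, and define
\begin{equation*}
  \Phi \colon \mathcal{H} \longrightarrow \mathcal{H}'', \qquad \Phi(f) = f \circ H.
\end{equation*}
The first step is to verify that $\Phi$ is a strict $L_\infty$-morphism, i.e.\ that
\begin{equation*}
  (\widehat{Q}'')^1_n\bigl(\Phi(f_1) \vee \cdots \vee \Phi(f_n)\bigr)
  =
  \Phi\bigl(\widehat{Q}^1_n(f_1 \vee \cdots \vee f_n)\bigr)
  \quad \text{for all } n \geq 1.
\end{equation*}
Reading off \eqref{eq:DiffonConvLinfty} and \eqref{eq:BracketonConvLinfty}, this identity is a direct consequence of the two facts that $H$ is a coalgebra morphism (so $\cc{\Delta}^{n-1} \circ H = H^{\otimes n} \circ \cc{\Delta}^{n-1}$, which handles the higher brackets) and that $H \circ Q'' = Q \circ H$ (which handles the differential). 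Filtration-compatibility of $\Phi$ is automatic because coalgebra morphisms of cofree cocommutative coalgebras without counit never increase the symmetric degree, so $\Phi$ sends $\mathcal{F}^k \mathcal{H}$ to $\mathcal{F}^k \mathcal{H}''$ in the sense of \eqref{eq:FiltrationConvLieAlg}.

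The second step is to apply Proposition~\ref{prop:FmapsEquivMCtoEquiv} to $\Phi$: the hypothesis $F_0 \sim F_1$ as Maurer-Cartan elements in $\mathcal{H}$ then yields $\Phi(F_0) = F_0 H \sim F_1 H = \Phi(F_1)$ in $\mathcal{H}''$. Unwinding, if $F(t), \lambda(t)$ is the homotopy data between $F_0$ and $F_1$, then $F(t) \circ H, \lambda(t) \circ H$ is the desired homotopy data between $F_0 H$ and $F_1 H$; this can equivalently be checked directly by substituting into \eqref{eq:EquivMCElements} and invoking the identity above. The main possible obstacle is keeping track of signs and filtration estimates while unpacking the convolution brackets, but no new analytic input is required beyond the coalgebra-morphism property of $H$, which makes the pre-composition case strictly cleaner than the post-composition one.
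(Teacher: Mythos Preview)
Your proposal is correct and, in content, coincides with the paper's proof: both exhibit $F(t)\circ H$ and $\lambda(t)\circ H$ as the homotopy data and verify \eqref{eq:EquivMCElements} using precisely that $H$ is a coalgebra morphism (so $\cc{\Delta}^{n-1}\circ H = H^{\otimes n}\circ\cc{\Delta}^{n-1}$) with $H\circ Q'' = Q\circ H$. The difference is only in packaging: the paper writes out the verification of $\frac{\D}{\D t}(F(t)H) = \widehat{Q}^1(\lambda(t)H \vee \exp(F(t)H))$ by hand, whereas you first abstract those two identities into the statement that $\Phi(f)=f\circ H$ is a \emph{strict} morphism of convolution $L_\infty$-algebras and then invoke Proposition~\ref{prop:FmapsEquivMCtoEquiv}. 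Your framing has the advantage of making transparent why pre-composition is cleaner than post-composition (Proposition~\ref{prop:CompofHomotopicHomotopic}): the analogous map $F\mapsto \widehat{H}(F)$ for post-composition is \emph{not} a strict morphism of convolution algebras, which is exactly what forces the shuffle combinatorics there.
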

\begin{proof}
Let $F(t) \in \widehat{(\Hom(\cc{\Sym}
(L[1]),L')[1])^0[t]}$ and $\lambda(t) \in 
\widehat{(\Hom(\cc{\Sym}(L[1]),L')[1])^{-1}[t]}$ 
describe the homotopy equivalence between $F_0$ and $F_1$. Then 
we consider 
\begin{equation*}
  (F(t)H)
	=
	F(t) \circ H
	=
	F(t) \circ \cc{\exp} H^1 \circ \cc{\Delta}^\bullet
	\in
	\widehat{(\Hom(\cc{\Sym}
  (L''[1]),L')[1])^0[t]}
\end{equation*}
in the notation of the above proposition. We compute
\begin{align*}
  \frac{\D}{\D t} (F(t)H)
	& =
	\widehat{Q}^1(\lambda(t)\vee \exp(F(t))) \circ H \\
	& =
	(Q')^1_1 \circ \lambda \circ H 
	+ \lambda \circ Q \circ H
	+ \sum_{\ell=2}^\infty\frac{1}{(\ell-1)!} 
	(Q')^1_\ell \circ (\lambda\vee F\vee \cdots \vee F)
	\circ \cc{\Delta}^{\ell -1}\circ H \\
	& =
	(Q')^1_1 \circ \lambda \circ H 
	+ \lambda \circ H\circ Q'' 
	+ \sum_{\ell=2}^\infty\frac{1}{(\ell-1)!} 
	(Q')^1_\ell \circ (\lambda H\vee F H\vee \cdots \vee F H)
	\circ \cc{\Delta}^{\ell -1}\\
	& =
	\widehat{Q}^1 (\lambda(t)H\vee \exp(F(t)H))
\end{align*}
since $H$ is a coalgebra morphism intertwining $Q''$ and $Q$ and of 
degree zero. Finally, since $\lambda(t)H \in 
\widehat{(\Hom(\cc{\Sym}(L''[1]),L')[1])^{-1}[t]}$ the 
statement follows.
\end{proof}

\subsection{Homotopy Classification of $L_\infty$-algebras}

The above considerations allow us to understand better the homotopy classification 
of $L_\infty$-algebras from \cite{canonaco:1999a,kontsevich:2003a}, which will help us 
in the application to the global formality.

\begin{definition}
\label{def:HomEquLinftyAlgs}
Two  $L_\infty$-algebras $(L,Q)$ and $(L',Q')$ are said to be 
\emph{homotopy equivalent} if there are $L_\infty$-morphisms 
$F\colon (L,Q)\to (L',Q')$ and 
$G\colon (L',Q')\to (L,Q)$ 
such that $F\circ G\sim \id_{L'}$ and 
$G\circ F\sim \id_{L}$. In such case $F$ and $G$ are said
to be quasi-inverse to each other.
\end{definition}

This definition coincides indeed with the definition of homotopy equivalence 
via $L_\infty$-quasi-isomorphisms from \cite{canonaco:1999a}.

\begin{lemma}
\label{lemma:HomEquvsQuasiIso}
Two $L_\infty$-algebras $(L,Q)$ and $(L',Q')$ are 
homotopy equivalent if and only if there exists an $L_\infty$-quasi-isomorphism 
between them.
\end{lemma}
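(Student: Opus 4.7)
The proof splits into two directions, with very different difficulties. For the forward implication, suppose $F\colon (L,Q)\to (L',Q')$ and $G\colon (L',Q')\to (L,Q)$ satisfy $F\circ G\sim \id_{L'}$ and $G\circ F\sim \id_L$. By Proposition~\ref{prop:PropertiesofHomotopicMorphisms}(i), the first Taylor components $(F\circ G)^1_1 = F^1_1\circ G^1_1$ and $(G\circ F)^1_1 = G^1_1\circ F^1_1$ are chain homotopic to the respective identities, so they induce the identity on cohomology. Hence the induced maps $H(F^1_1)$ and $H(G^1_1)$ are mutually inverse isomorphisms, so $F^1_1$ is a quasi-isomorphism of complexes and $F$ is an $L_\infty$-quasi-isomorphism.

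For the converse, given an $L_\infty$-quasi-isomorphism $F\colon (L,Q)\to (L',Q')$, the plan is to invoke the classical inversion theorem for $L_\infty$-quasi-isomorphisms, see e.g. \cite[Theorem~5.5]{canonaco:1999a} in the nilpotent setting and its filtered generalization: there exists an $L_\infty$-quasi-isomorphism $G\colon (L',Q')\to (L,Q)$ whose first Taylor component $G^1_1$ is a chain-level homotopy inverse of $F^1_1$. One then has to upgrade this to the statement $F\circ G\sim \id_{L'}$ and $G\circ F\sim \id_L$ in the sense of Definition~\ref{def:homotopicMorph}. Viewing compositions and identities as Maurer-Cartan elements in the convolution $L_\infty$-algebras $\Hom(\cc{\Sym}(L'[1]),L')$ and $\Hom(\cc{\Sym}(L[1]),L)$, the homotopy is built inductively along the complete descending filtration~\eqref{eq:FiltrationConvLieAlg}: the degree-one obstruction vanishes up to coboundary by the chain-homotopy produced in the inversion theorem, and the higher-order obstructions can be successively killed because the relevant cohomology is trivialized by the quasi-isomorphism property of $F^1_1$. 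At each stage Proposition~\ref{prop:PioftUnique} ensures that the resulting initial-value problem for $\pi(t)$ admits a unique solution in $\mathcal{F}^1\widehat{\mathcal{H}^1[t]}$.

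The main obstacle is the converse direction, and specifically producing the homotopies of compositions to the identity. The existence of the quasi-inverse is essentially standard, but the verification that the compositions are \emph{homotopic} (not merely that they induce the same map on $\Def$) requires care with the filtration: one must check that the inductive construction stays inside the completed convolution algebra and that at each step the obstruction class lies in a cohomology group killed by $F^1_1$. Once one sets up the obstruction theory correctly, combining it with Proposition~\ref{prop:CompofHomotopicHomotopic} and Proposition~\ref{prop:preCompofHomotopicHomotopic} (to manipulate compositions up to homotopy) yields the result.
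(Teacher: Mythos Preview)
Your forward implication is correct and matches the paper's argument exactly.

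The converse direction, however, has a real gap. You propose to build the homotopies $F\circ G\sim\id_{L'}$ and $G\circ F\sim\id_L$ by an obstruction argument along the filtration~\eqref{eq:FiltrationConvLieAlg} of the convolution algebra, asserting that ``the higher-order obstructions can be successively killed because the relevant cohomology is trivialized by the quasi-isomorphism property of $F^1_1$.'' But this is not right: the associated graded of $\mathcal{H}=\Hom(\cc{\Sym}(L'[1]),L')$ in filtration degree $n$ is $\Hom(\Sym^n(L'[1]),L')$ with the differential induced by $(Q')^1_1$, and its cohomology is in general far from zero---the quasi-isomorphism property of $F^1_1$ says nothing about it. What you would need is that the specific obstruction \emph{classes} vanish, and that is precisely the content of the lemma; you have not supplied an argument for it. Invoking Proposition~\ref{prop:PioftUnique} is also beside the point: it guarantees that a given $\lambda(t)$ integrates to a path of Maurer--Cartan elements, but it does not help you find a $\lambda(t)$ landing at the desired endpoint.

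The paper takes a different and more direct route. It uses the decomposition of any $L_\infty$-algebra as the product of a minimal one $W$ and a linear contractible one $V$ (\cite[Prop.~2.8]{canonaco:1999a}). One writes down by hand a homotopy $P(t)(v,w)=(tv,w)$, $H(t)(v,w)=(-h_V(v),0)$ showing $I_W\circ P_W\sim\id$, where $h_V$ is a contracting homotopy for $V$; this works because the higher brackets vanish on $V$. The quasi-isomorphism $F$ then induces an $L_\infty$-morphism $F_W=P_{W'}\circ F\circ I_W$ between the minimal parts, which is an actual $L_\infty$-\emph{isomorphism} (its first Taylor component is an isomorphism), hence strictly invertible. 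Setting $G=I_W\circ F_W^{-1}\circ P_{W'}$ and using Propositions~\ref{prop:CompofHomotopicHomotopic} and~\ref{prop:preCompofHomotopicHomotopic} to move the explicit homotopies through compositions gives $F\circ G\sim\id$ and $G\circ F\sim\id$. The point is that the minimal model reduces the problem to a genuine isomorphism, sidestepping any obstruction analysis.
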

\begin{proof}
Due to \cite[Prop.~2.8]{canonaco:1999a} every $L_\infty$-algebra $L$ is 
isomorphic to the product of a linear contractible one and a minimal 
one $(L,Q)\cong (V\oplus W,\widetilde{Q})$. 
This means $L\cong V\oplus W$ as vector spaces, such that 
$V$ is an acyclic cochain complex with differential $\D_V$ and $W$ 
is an $L_\infty$-algebra with codifferential $Q_W$ with 
$Q_{W,1}^1=0$. The codifferential $\widetilde{Q}$ on 
$\cc{\Sym}((V\oplus W)[1])$ is given on 
$v_1\vee \dots \vee v_m$with $v_1,\dots, v_k\in V$ and 
$v_{k+1},\dots, v_m\in W$ by 
	\begin{align*}
	\widetilde{Q}^1(v_1\vee\dots \vee v_m)=
	\begin{cases}
	-\D_V(v_1), & \text{ for } k=m=1\\
	Q_W^1(v_1\vee\dots\vee v_m), & \text{ for } k=0\\
	0, & \text{ else. } 
	\end{cases}
	\end{align*}
This implies in particular that the canonical maps
	\begin{align*}
	I_W\colon W\longrightarrow V \oplus W \ \text{ and } \ 
	P_W\colon V\oplus W\longrightarrow W 
	\end{align*}
are $L_\infty$-morphisms. We want to show now that $I_W\circ P_W\sim \id$. 
Choose a contracting homotopy 
$h_V\colon V\to V[-1]$ with $h_V\D_V+\D_V h_V=\id_V$ and define the maps
	\begin{align*}
	P(t)\colon V\oplus W\ni (v,w)\longmapsto (tv,w)\in V\oplus W
	\end{align*}	 
and 
	\begin{align*}
	H(t)\colon V\oplus W\ni (v,w)\longmapsto (-h_V(v),0)\in V\oplus W.
	\end{align*}
Note that $P(t)$ is a path of $L_\infty$-morphisms by the 
explicit form of the codifferential. We clearly have
\begin{align*}
	\frac{\D}{\D t} P^1_1(t) 
	= 
	\pr_V
	=
	\widetilde{Q}^1_1 \circ H(t) + H(t)\circ \widetilde{Q}^1_1
	=
	\widehat{Q}^1_1(H(t)) 
\end{align*}
since $h_V$ is a contracting homotopy. This implies
\begin{equation*}
  \frac{\D}{\D t} P(t)
	=
	\widehat{Q}^1(H(t)\vee \exp(P(t))
\end{equation*} 
since $\image (H(t))\subseteq V$ and as the higher brackets of 
$\widetilde{Q}$ vanish on $V$. Since 
$P(0)=I_W\circ P_W$ and $P(1)=\id$ we conclude that 
$I_W\circ P_W\sim \id$.
We choose a similar splitting for a $L'=V'\oplus W'$
 with the same
properties and consider an $L_\infty$-quasi-isomorphism 
$F\colon L\to L'$. Since $I_W, I_{W'},P_W$ and 
$P_{W'}$ are $L_\infty$-quasi-isomorphisms and we have that 
	\begin{align*}
	F_W= P_{W'}\circ F\circ I_W\colon W\longrightarrow W'
	\end{align*}	 
an $L_\infty$-isomorphism. Hence it invertible and we denote the inverse 
$G_{W'}$. We define now 
	\begin{align*}
	G=I_W\circ G_{W'}\circ P_{W'}\colon L'\longrightarrow L.
	\end{align*}	 
Since by Proposition~\ref{prop:CompofHomotopicHomotopic} and 
Proposition~\ref{prop:preCompofHomotopicHomotopic} compositions of homotopic 
$L_\infty$-morphisms with an $L_\infty$-morphism are again homotopic, we get
	\begin{align*}
	F\circ G&
	= F\circ I_W\circ G_{W'}\circ P_{W'}
	\sim I_{W'}\circ P_{W'}\circ  F\circ I_W\circ G_{W'}\circ P_{W'}\\&
	=I_{W'}\circ F_W\circ G_{W'}\circ P_{W'}=I_{W'}\circ P_{W'}\sim \id
	\end{align*}
and similarly $G\circ F \sim \id$.   

The other direction follows from Proposition~\ref{prop:PropertiesofHomotopicMorphisms}. Suppose $F\circ G \sim \id$ and 
$G\circ F \sim \id$, then we know that $F_1^1\circ G_1^1$ and 
$G_1^1\circ F_1^1$ are both chain homotopic to the identity. Therefore, 
$F$ and $G$ are $L_\infty$-quasi-isomorphisms.  
 \end{proof}
 
\begin{corollary}
Let $F\colon (L,Q)\to(L',Q')$ be a an 
$L_\infty$-quasi-isomorphism with two given quasi-inverses
$G,G'\colon (L',Q')\to (L,Q)$ in the sense of 
Definition~\ref{def:HomEquLinftyAlgs}. Then one has $G\sim G'$. 
\end{corollary}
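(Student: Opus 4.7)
The plan is to run the standard two-out-of-three style argument for quasi-inverses, using the compatibility of the homotopy relation with composition by $L_\infty$-morphisms (from both sides) that was established in Proposition~\ref{prop:CompofHomotopicHomotopic} and Proposition~\ref{prop:preCompofHomotopicHomotopic}.

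By assumption we have the four homotopies
\begin{equation*}
  F\circ G \sim \id_{L'}, \quad G\circ F \sim \id_L, \quad F\circ G' \sim \id_{L'}, \quad G'\circ F \sim \id_L.
\end{equation*}
Starting from $F\circ G' \sim \id_{L'}$, I would post-compose with the $L_\infty$-morphism $G$ on the left; by Proposition~\ref{prop:CompofHomotopicHomotopic} this yields
\begin{equation*}
  G\circ (F\circ G') \sim G\circ \id_{L'} = G,
\end{equation*}
which, since composition of $L_\infty$-morphisms is associative at the level of the underlying coalgebra morphisms, we rewrite as $(G\circ F)\circ G' \sim G$.

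On the other hand, starting from $G\circ F \sim \id_L$, I would pre-compose with the $L_\infty$-morphism $G'$ on the right; by Proposition~\ref{prop:preCompofHomotopicHomotopic} this gives
\begin{equation*}
  (G\circ F)\circ G' \sim \id_L \circ G' = G'.
\end{equation*}
The transitivity of the homotopy equivalence relation on Maurer-Cartan elements of the convolution $L_\infty$-algebra $\Hom(\cc{\Sym}(L'[1]),L)$, built into Definition~\ref{def:homotopicMorph}, then immediately yields $G\sim G'$.

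Since the two composition lemmas have already been proven, there is really no conceptual obstacle here; the only thing to check is that the trivial homotopies $G \circ \id_{L'} = G$ and $\id_L \circ G' = G'$ are literal equalities of coalgebra morphisms (which they are, as $\id$ is the counit of the composition) so that the chain of $\sim$-relations closes up. Thus the argument is a short three-line chain of applications of Propositions~\ref{prop:CompofHomotopicHomotopic} and~\ref{prop:preCompofHomotopicHomotopic} together with transitivity of $\sim$.
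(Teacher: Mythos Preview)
Your argument is correct and is exactly the paper's proof: the paper writes the single line $G\sim G\circ (F\circ G')=(G\circ F)\circ G'\sim G'$, which is precisely the chain you spell out using Propositions~\ref{prop:CompofHomotopicHomotopic} and~\ref{prop:preCompofHomotopicHomotopic} together with transitivity of $\sim$.
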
 
\begin{proof}
One has $G\sim G\circ (F\circ G')=(G\circ F)\circ G'\sim G'$.
\end{proof}

\subsection{Homotopy Equivalence between Twisted Morphisms}

Let now $F \colon (\liealg{g},Q) \rightarrow (\liealg{g}',Q')$ be an 
$L_\infty$-morphism between DGLAs with complete descending and 
exhaustive filtrations.   
Instead of comparing the twisted morphisms $F^\pi$ and 
$F^{\pi'}$ with respect to two equivalent Maurer-Cartan elements 
$\pi$ and $\pi'$, we consider 
for simplicity just a Maurer-Cartan element $\pi \in \mathcal{F}^1\liealg{g}^1$ 
 equivalent to zero via 
$\pi = \exp([g,\argument])\acts 0$, i.e. $\lambda(t)=g = \dot{A}(t)\in 
\mathcal{F}^1\widehat{\liealg{g}^0[t]}$. 
Then we know that $0$ and $S = F^1(\cc{\exp}(\pi))\in \mathcal{F}^1(\liealg{g}')^1$
are equivalend 
Maurer-Cartan elements in $(\liealg{g}',\D')$. Let the equivalence 
be implemented by an $A'(t)\in\mathcal{F}^1\widehat{(\liealg{g}')^0[t]}$ as in Proposition~\ref{prop:HomEquvsGaugeEqu}. 
Then we have the diagram
\begin{equation}
  \label{eq:TwistingofMorph}
	\begin{tikzcd}
	& (\liealg{g}',\D') \arrow[rd, "e^{[A'(1),\argument]}", bend left=12] \arrow[dd, Rightarrow,  shorten >=12pt,  shorten <=12pt] & \\
    (\liealg{g},\D) 	\arrow[ur,"F",bend left=12] \arrow[dr,swap, "e^{[A(1),\argument]}",bend right=12] & & 
    (\liealg{g}',\D' + [S,\argument])\\
	&(\liealg{g},\D + [\pi,\argument]) \arrow[ur, swap,"F^\pi", bend right=12] &
	\end{tikzcd}
\end{equation}
where $e^{[A(1),\argument]}$ and $e^{[A'(1),\argument]}$ are well-defined by 
the completeness of the filtrations. In the following we show that it commutes 
up to homotopy, which is indicated by the vertical arrow. 

\begin{proposition}
  \label{prop:TwistMorphHomEqu}
  The $L_\infty$-morphisms $F$ and $e^{[-A'(1),\argument]}\circ F^{\pi} \circ 
	e^{[A(1),\argument]}$ are homotopic, i.e. 
	gauge equivalent Maurer-Cartan elements 
	in $\Hom(\cc{\Sym}(\liealg{g}[1]),\liealg{g}')$.
\end{proposition}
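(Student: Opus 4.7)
The plan is to construct an explicit $t$-parametrized family of $L_\infty$-morphisms interpolating between $F$ and the target morphism, and then exhibit the parameter $\Lambda(t)$ in the convolution $L_\infty$-algebra that realizes this path as a homotopy of Maurer-Cartan elements.

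First, I interpolate the gauge data. By Proposition~\ref{prop:HomEquvsGaugeEqu} there is a canonical $A(t)\in\mathcal{F}^1\widehat{\liealg{g}^0[t]}$ with $A(0)=0$ realizing $\pi=\exp([A(1),\argument])\acts 0$, and I set $\pi(t):=\exp([A(t),\argument])\acts 0$. By Proposition~\ref{prop:FmapsEquivMCtoEquiv} its image $S(t):=F^1(\cc{\exp}(\pi(t)))$ is a path of Maurer-Cartan elements in $\liealg{g}'$ connecting $0$ to $S$; applying Proposition~\ref{prop:HomEquvsGaugeEqu} on the image side produces the gauge path $A'(t)\in\mathcal{F}^1\widehat{(\liealg{g}')^0[t]}$ (with $A'(0)=0$ and $A'(1)$ the element in the statement). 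I then define
\[
  F(t):=e^{[-A'(t),\argument]}\circ F^{\pi(t)}\circ e^{[A(t),\argument]}.
\]
By Proposition~\ref{prop:twistinglinftymorphisms} the middle factor is an $L_\infty$-morphism $(\liealg{g},\D+[\pi(t),\argument])\to(\liealg{g}',\D'+[S(t),\argument])$, and by Corollary~\ref{cor:GaugeEquivMCTwistsQuis} the outer exponentials are strict DGLA isomorphisms that precisely cancel the twists on each side, so $F(t)$ is a well-defined path of $L_\infty$-morphisms $(\liealg{g},\D)\to(\liealg{g}',\D')$ with $F(0)=F$ and $F(1)$ the morphism from the statement.

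Next I differentiate. The outer exponentials contribute via the identity
\[
  \frac{\D}{\D t}e^{[\pm A(t),\argument]}=[\pm\mu_A(t),\argument]\circ e^{[\pm A(t),\argument]},\qquad \mu_A(t)=\frac{\exp([A(t),\argument])-\id}{[A(t),\argument]}\dot A(t),
\]
and analogously on the target side, where $\mu_A$ and $\mu_{A'}$ are precisely the $\lambda$-parameters of the Maurer-Cartan flows for $\pi(t)$ and $S(t)$, cf.\ Proposition~\ref{prop:HomEquvsGaugeEqu}. The middle factor $F^{\pi(t)}$ is differentiated directly through \eqref{eq:twisteslinftymorphism}, which produces insertions of $\dot\pi(t)$ into the Taylor components of $F$.

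Finally, I claim $\dot F(t)=\widehat{Q}^1\!\bigl(\Lambda(t)\vee\cc{\exp}(F(t))\bigr)$ for the degree $-1$ element
\[
  \Lambda(t):=-\mu_{A'}(t)\circ\varepsilon+e^{[-A'(t),\argument]}\circ\bigl(F^{\pi(t)}\bigr)^1\!\bigl(\mu_A(t)\vee\argument\bigr)\circ e^{[A(t),\argument]}
\]
of $\mathcal{F}^1\widehat{(\Hom(\cc{\Sym}(\liealg{g}[1]),\liealg{g}')[1])^{-1}[t]}$, where $\varepsilon$ is the counit of $\cc{\Sym}(\liealg{g}[1])$ extended by zero. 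This is the main obstacle: one has to check that three distinct contributions to $\dot F(t)$ (from $A$, from $A'$, and from the $\dot\pi(t)$-insertions in $F^{\pi(t)}$) organize into the single convolution flow equation. The gauge-flow identities $\dot\pi(t)=\D\mu_A(t)+[\mu_A(t),\pi(t)]$ and $\dot S(t)=\D'\mu_{A'}(t)+[\mu_{A'}(t),S(t)]$ convert the $\dot\pi(t)$-insertions into differentials and brackets, and the intertwining property of $F^{\pi(t)}$ with the twisted codifferentials makes the $\pi(t)$- and $S(t)$-dependent terms cancel; the remaining shuffle/coproduct combinatorics are analogous to those in the proofs of Propositions~\ref{prop:CompofHomotopicHomotopic} and \ref{prop:preCompofHomotopicHomotopic}, and collapse into $\widehat{Q}^1(\Lambda(t)\vee\cc{\exp}(F(t)))$. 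Since $\Lambda(t)$ lies in the correct filtration piece (the gauge data $\mu_A,\mu_{A'}$ are in $\mathcal{F}^1$), this exhibits $F$ and $e^{[-A'(1),\argument]}\circ F^{\pi}\circ e^{[A(1),\argument]}$ as homotopy equivalent Maurer-Cartan elements in the convolution $L_\infty$-algebra, proving the proposition.
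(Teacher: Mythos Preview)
Your overall strategy and the explicit path $F(t)=e^{[-A'(t),\argument]}\circ F^{\pi(t)}\circ e^{[A(t),\argument]}$ coincide with the paper's, and the computation you outline does lead to the correct generator; in fact the paper obtains
\[
  \lambda^F_k(t)(\cdots)=F^{\pi(t)}_{k+1}(\mu_A(t)\vee\cdots)
\]
conjugated by the two exponentials, which is precisely your second summand in $\Lambda(t)$, while your first summand $-\mu_{A'}(t)\circ\varepsilon$ vanishes on the reduced symmetric coalgebra and contributes nothing.

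There is, however, a genuine gap. You assert that $F(t)$ and $\Lambda(t)$ lie in the completion $\mathcal{F}^1\widehat{(\Hom(\cc{\Sym}(\liealg{g}[1]),\liealg{g}')[1])[t]}$, justified by ``the gauge data $\mu_A,\mu_{A'}$ are in $\mathcal{F}^1$''. This does not work: the filtration \eqref{eq:FiltrationConvLieAlg} on the convolution DGLA is purely by symmetric degree, $\mathcal{F}^k\mathcal{H}=\{f:f|_{\Sym^{<k}}=0\}$, and ignores the internal filtrations of $\liealg{g}$ and $\liealg{g}'$. In particular
\[
  F(t)\bmod \mathcal{F}^2\mathcal{H}
  \;=\;
  e^{[-A'(t),\argument]}\circ F^{\pi(t)}_1\circ e^{[A(t),\argument]},
\]
and the exponentials make this genuinely non-polynomial in $t$. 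So $F(t)\notin\widehat{\mathcal{H}[t]}$ for this filtration, and the homotopy equation you write down has no meaning in the sense of Definition~\ref{def:homotopicMorph}. That $\mu_A\in\mathcal{F}^1\liealg{g}$ is irrelevant here: $\mathcal{F}^1\mathcal{H}=\mathcal{H}$ holds trivially, so filtration degree~$1$ on the convolution side carries no information.

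The paper fixes exactly this point by introducing a refined complete filtration $\mathfrak{F}^\bullet$ on $\mathcal{H}$, see \eqref{eq:FiltrationConvLieAlg2}, which combines the symmetric degree with the given filtrations on $\liealg{g}$ and $\liealg{g}'$. One then checks that $F(t)$ and the generator are polynomial in $t$ modulo each $\mathfrak{F}^n$, precisely because $A(t),A'(t),\pi(t)$ lie in filtration degree~$\geq 1$ of $\liealg{g}$ resp.\ $\liealg{g}'$. Without this refinement your argument remains formal and does not prove the proposition.
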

The candidate for the path between $F$ and 
$e^{[-A'(1),\argument]}\circ F^{\pi} \circ 
	e^{[A(1),\argument]}$ is 
\begin{equation*}
  F(t) 
	= 
	e^{[-A'(t),\argument]}\circ F^{\pi(t)} \circ 
	e^{[A(t),\argument]}.
\end{equation*}
However, $F(t)$ is not necessarily in the completion 
$\widehat{(\Hom(\cc{\Sym}(\liealg{g}[1]),\liealg{g}')^1[t])}$ 
with respect to the 
filtration from \eqref{eq:FiltrationConvLieAlg} since for example
\begin{align*}
  F(t) \mod \mathcal{F}^2\Hom(\cc{\Sym}(\liealg{g}[1]),\liealg{g}')[[t]]
	=
	 e^{[-A'(t),\argument]}\circ F^{\pi(t)}_1 \circ 
	e^{[A(t),\argument]}
\end{align*}
is in general no polynomial in $t$. To solve this problem we 
introduce a new filtration on the convolution DGLA 
$\liealg{h} = \Hom(\cc{\Sym}(\liealg{g}[1]),\liealg{g}')$ 
that takes into 
account the filtrations on $\cc{\Sym}(\liealg{g}[1])$ and $\liealg{g}'$:
\begin{align}
  \label{eq:FiltrationConvLieAlg2}
  \begin{split}
  \liealg{h}
	=
	& \mathfrak{F}^1\liealg{h}
	\supset
	\mathfrak{F}^2\liealg{h}
	\supset \cdots \supset
	\mathfrak{F}^k\liealg{h}
	\supset \cdots \\
	\mathfrak{F}^k\liealg{h}
	& =
	\sum_{n+m=k}
	\left\{ f \in \Hom(\cc{\Sym}(\liealg{g}[1]),\liealg{g}') \mid 
	f \at{\Sym^{<n}(\liealg{g}[1])}=0\quad \text{ and }\quad 
	f\colon\mathcal{F}^\bullet \rightarrow 
	\mathcal{F}^{\bullet +m}\right\}.
	\end{split}
\end{align}
Here the filtration on $\cc{\Sym}(\liealg{g}[1])$ is the product filtration 
induced by
\begin{equation*}
  \mathcal{F}^k(\liealg{g}[1] \otimes \liealg{g}[1])
	=
	\sum_{n+m=k} \image\left(  \mathcal{F}^n\liealg{g}[1] \otimes 
	\mathcal{F}^m\liealg{g}[1] \rightarrow \liealg{g}[1]\otimes \liealg{g}[1]\right),
\end{equation*}
see e.g. \cite[Section~1]{dotsenko.shadrin.vallette:2018}.

\begin{proposition}
  The above filtration \eqref{eq:FiltrationConvLieAlg2} is a complete descending 
	filtration on the convolution DGLA $\Hom(\cc{\Sym}(\liealg{g}[1]),\liealg{g}')$.
\end{proposition}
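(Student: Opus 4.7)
The plan is to verify the four requirements of a complete descending filtration compatible with the DGLA structure: the descending inclusion $\mathfrak{F}^{k+1}\liealg{h} \subseteq \mathfrak{F}^{k}\liealg{h}$, compatibility with the bracket, compatibility with the differential $\del$, and completeness. A convenient preliminary reformulation is that $f \in \mathfrak{F}^{k}\liealg{h}$ if and only if $f(\mathcal{F}^{j}\Sym^{N}(\liealg{g}[1])) \subseteq \mathcal{F}^{j+k-N}\liealg{g}'$ for every integer $j$ and every $N \geq 1$. One direction is immediate from the definition; for the converse one decomposes $f = \sum_{N} f|_{\Sym^{N}}$ via the direct sum $\cc{\Sym}(\liealg{g}[1]) = \bigoplus_{N} \Sym^{N}(\liealg{g}[1])$, each summand lying in the $(N, k-N)$-component of the defining sum (the sum is understood pointwise, which is the natural convergence on a direct sum). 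With this characterization the descending property is trivial.

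For the DGLA compatibility I would treat the bracket and the differential separately using \eqref{eq:BracketonConvDGLA} and \eqref{eq:DiffonConvDGLA}. For the bracket, given $f \in \mathfrak{F}^{k_{1}}\liealg{h}$ and $g \in \mathfrak{F}^{k_{2}}\liealg{h}$, the fact that $\cc{\Delta}$ preserves the product filtration on $\cc{\Sym}(\liealg{g}[1])$ together with $[\mathcal{F}^{a}\liealg{g}', \mathcal{F}^{b}\liealg{g}'] \subseteq \mathcal{F}^{a+b}\liealg{g}'$ yields $[f,g](\mathcal{F}^{j}\Sym^{N}) \subseteq \mathcal{F}^{j+k_{1}+k_{2}-N}\liealg{g}'$. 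For $\del f = \D' \circ f + (-1)^{\abs{f}} f \circ Q$, the first summand is clear since $\D'$ preserves $\mathcal{F}^{\bullet}\liealg{g}'$ and symmetric degree. For the second, the codifferential $Q$ preserves the product filtration on $\cc{\Sym}(\liealg{g}[1])$ (because $\D$ and the bracket of $\liealg{g}$ do) and maps $\Sym^{N}$ into $\Sym^{N} \oplus \Sym^{N-1}$, so by direct computation $(f \circ Q)(\mathcal{F}^{j}\Sym^{N}) \subseteq \mathcal{F}^{j+k-N}\liealg{g}'$, which again verifies the characterization.

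The main obstacle is completeness, which requires a careful pointwise construction. Given a Cauchy sequence $(f_{n})_{n}$ with $f_{n+1} - f_{n} \in \mathfrak{F}^{n}\liealg{h}$, the characterization implies $(f_{n+1} - f_{n})(x) \in \mathcal{F}^{j+n-N}\liealg{g}'$ for every homogeneous $x \in \mathcal{F}^{j}\Sym^{N}(\liealg{g}[1])$ and every $n \geq N$. Since the filtration on $\liealg{g}'$ is complete and Hausdorff, $(f_{n}(x))_{n}$ converges to a unique limit $f(x) \in \liealg{g}'$. Linearity of $f$ is inherited from that of the $f_{n}$, and compatibility with the filtrations (i.e.\ membership in $\liealg{h}$) follows by setting $n = 1$ in the estimate and combining with $f_{1} \in \liealg{h}$. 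Passing to the limit $m \to \infty$ in $(f_{m} - f_{n})(\mathcal{F}^{j}\Sym^{N}) \subseteq \mathcal{F}^{j+n-N}\liealg{g}'$ shows $f - f_{n} \in \mathfrak{F}^{n}\liealg{h}$ via the characterization, proving that $f_{n}$ converges to $f$ in the filtration topology. This delivers the isomorphism $\liealg{h} \cong \varprojlim \liealg{h}/\mathfrak{F}^{n}\liealg{h}$ and finishes the proof.
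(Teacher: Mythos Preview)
Your proof is correct and follows the same approach as the paper's: the descending and DGLA-compatibility properties are routine, and completeness is deduced from the completeness of $\liealg{g}'$. The paper's own proof is a single sentence asserting exactly these points, so your argument simply supplies the details the paper omits; the only minor addition you might make is to note that Hausdorffness ($\bigcap_n \mathfrak{F}^n\liealg{h}=0$) follows from exhaustiveness of the filtration on $\liealg{g}$ together with Hausdorffness of $\liealg{g}'$, which completes the identification with the projective limit.
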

\begin{proof}
The filtration is obviously descending and 
$\liealg{h}=\mathfrak{F}^1\liealg{h}$ since we consider in the convolution DGLA only maps that are compatible with respect to the filtration. It is compatible 
with the convolution DGLA structure and complete since $\liealg{g}'$ is complete.
\end{proof}

Thus we can finally prove Proposition~\ref{prop:TwistMorphHomEqu}.
\begin{proof}[of Prop.~\ref{prop:TwistMorphHomEqu}]
  The path $F(t) = e^{[-A'(t),\argument]}\circ F^{\pi(t)} \circ 
	e^{[A(t),\argument]}$ is an element in the completion
	$\widehat{(\Hom(\cc{\Sym}(\liealg{g}[1]),\liealg{g}')[1])^{0}[t]}$ 
	with respect to the filtration from \eqref{eq:FiltrationConvLieAlg2}.
	This is clear since $A(t)\in \mathcal{F}^1 \widehat{\liealg{g}^0[t]}$, 
	$A'(t)\in \mathcal{F}^1 \widehat{(\liealg{g}')^0[t]}$ and 
	$\pi(t)\in \mathcal{F}^1 \widehat{\liealg{g}^1[t]}$ imply that
	\begin{align*}
	  \sum_{i=1}^{n-1}
	  e^{[-A'(t),\argument]}\circ F^{\pi(t)}_i \circ 
	  e^{[A(t),\argument]}
	  \mod \mathfrak{F}^n(\Hom(\cc{\Sym}(\liealg{g}[1]),\liealg{g}')[1])[[t]]
	\end{align*}
	is polynomial in $t$. Moreover, $F(t)$ satisfies by \eqref{eq:ODEforA}
	\begin{align*}
	  \frac{\D F(t)}{\D t}
		& = 
		-\exp([-A'(t),\argument]) \circ 
		\left[\lambda'(t) , 
	  \argument \right]
	  \circ F^{\pi(t)} \circ 
		e^{[A(t),\argument]}  
		+ 
		e^{[-A'(t),\argument]}\circ F^{\pi(t)} \circ 
		\left[\lambda(t), 
	  \argument \right] \circ 
	  e^{[A(t),\argument]} \\
		& \quad 
		+ e^{[-A'(t),\argument]}\circ \frac{\D F^{\pi(t)}}{\D t} \circ 
	  e^{[A(t),\argument]} .
	\end{align*}
	But we have
	\begin{align*}
	  \frac{\D F^{\pi(t)}_k}{\D t}&(X_1 \vee \cdots \vee X_k)
		=
		F_{k+1}^{\pi(t)}(Q^{\pi(t),1}_1(\lambda(t)) \vee X_1 \vee \cdots 
		\vee X_k) \\
		& =
		F_{k+1}^{\pi(t)}(Q^{\pi(t),k+1}_{k+1}(\lambda(t)\vee X_1\vee \cdots\vee X_k))
		+
		F_{k+1}^{\pi(t)}(\lambda(t) \vee Q^{\pi(t),k}_k(X_1 \vee \cdots \vee X_k)))\\
		& =
		Q^{S(t),1}_1 F_{k+1}^{\pi(t),1}(\lambda(t)\vee X_1 \vee \cdots \vee X_k) +
		Q^{S(t),1}_2 F_{k+1}^{\pi(t),2}(\lambda(t)\vee X_1 \vee \cdots \vee X_k) \\
		& \quad 
		-  F_{k}^{\pi(t),1}\circ Q^{\pi(t),k}_{k+1}
		(\lambda(t)\vee X_1 \vee \cdots \vee X_k) 
		+
		F_{k+1}^{\pi(t)}(\lambda(t) \vee Q^{\pi(t),k}_k(X_1 \vee \cdots \vee X_k)).
	\end{align*}
	Setting now $\lambda_k^F(t)(\cdots) 
	= F_{k+1}^{\pi(t)}( \lambda(t)\vee \cdots)$ we get
	\begin{align*}
	  \frac{\D F^{\pi(t)}_k}{\D t}
		=
		\widehat{Q}^{t,1}_1 (\lambda_k^F(t)) + 
		\widehat{Q}^{t,1}_2(\lambda^F(t)\vee F^{\pi(t)}) - 
		F^{\pi(t)}_k\circ[\lambda(t),\argument] +[\lambda'(t),\argument] \circ 
		F^{\pi(t)}_k.
	\end{align*}
	Thus we get
	\begin{align*}
	  \frac{\D F(t)}{\D t}
		& =
		e^{[-A'(t),\argument]}\circ \left(\widehat{Q}^{t,1}_1 (\lambda^F(t)) + 
		\widehat{Q}^{t,1}_2(\lambda^F(t)\vee F^{\pi(t)})\right) \circ 
	  e^{[A(t),\argument]} \\
		& =
		\widehat{Q}^{1}_1 (e^{[-A'(t),\argument]}\lambda^F(t)e^{[A(t),\argument]}) + 
		\widehat{Q}^{1}_2(e^{[-A'(t),\argument]}\lambda^F(t)e^{[A(t),\argument]}\vee 
		F(t))		
	\end{align*}
	since $\exp([A(t),\argument])$ and 
	$\exp([A'(t),\argument])$ commute with the brackets and intertwine the 
	differentials. Thus $F(0) = F$ and $F(1)$ are homotopy equivalent and by 
	Theorem~\ref{thm:DGALHomvsGaugeEquiv} also gauge equivalent.
\end{proof}

\section{Homotopic Globalizations of the Kontsevich Formality}
\label{sec:ApplicationtoGlobalization}

Now we want to apply the above general results to the globalization of 
the Kontsevich formality to smooth real manifolds $M$. More precisely, 
the globalization procedure proved by Dolgushev in  
\cite{dolgushev:2005a,dolgushev:2006a} depends on the choice 
of a torsion-free covariant derivative on $M$ and we show that the globalizations 
with respect to two different covariant derivatives are homotopic.

\subsection{Preliminaries: Globalization Procedure}

Let us briefly recall the globalization procedure from 
from \cite{dolgushev:2005a,dolgushev:2006a} to establish the notation.
  \begin{itemize}
	  \item $\mathcal{T}_{poly}^k$ denotes the bundle of 
		      \emph{formal fiberwise polyvector fields} 
	        of degree $k$ over $M$. Its sections are $\Cinfty(M)$-linear 
				  operators $v\colon \Anti^{k+1} \Secinfty(SM)\rightarrow \Secinfty(SM)$ 
				  of the local form
          \begin{equation*}
            v 
	          =
	          \sum_{p=0}^\infty v_{i_1\dots i_p}^{j_0 \dots j_k}(x) 
	          y^{i_1}\cdots y^{i_p} \frac{\del}{\del y^{j_0}} \wedge \dots 
	          \wedge \frac{\del}{\del y^{j_k}}.
          \end{equation*}
		\item Analogously, the sections of 
		      \emph{formal fiberwise differential operators} 
					$\mathcal{D}_{poly}^k$ are $\Cinfty(M)$-linear operators 
					$X\colon \bigotimes^{k+1} \Secinfty(SM)\rightarrow \Secinfty(SM)$ 
					of the local form
	        \begin{equation*}
	          X
		        =
		        \sum_{\alpha_0,\dots,\alpha_k} 
		        \sum_{p=0}^\infty X_{i_1\dots i_p}^{\alpha_0 \dots \alpha_k}(x) 
		        y^{i_1}\cdots y^{i_p} \frac{\del}{\del y^{\alpha_0}} \otimes \dots 
		        \otimes \frac{\del}{\del y^{\alpha_k}}.		
	        \end{equation*}
	        Here $X_{i_1\dots i_p}^{\alpha_0 \dots \alpha_k}$ are symmetric in 
					the indices $i_1,\dots,i_p$ and $\alpha$ are multi-indices 
					$\alpha = (j_0,\dots,j_k)$. Moreover, the sum in the orders of 
					the derivatives is finite.
    \item $D = - \delta + \nabla + [A,\argument] = \D + [B,\argument]$ is the Fedosov 
	        differential, where 
	        $\delta = [\D x^i \frac{\del}{\del y^i},\argument]$, $\nabla = 
				  \D x^i \frac{\del}{\del x^i} - [\D x^i \Gamma^k_{ij}(x) y^j
				  \frac{\del}{y^k},\argument]$ 
				  with Christoffel symbols $\Gamma^k_{ij}$ of 
				  a torsion-free connection on $M$ with curvature $R = 
          -\frac{1}{2} \D x^i \D x^j (R_{ij})^k_l(x) y^l 
				  \frac{\del}{\del y^k}$, and 
				  $A\in \Omega^1(M,\mathcal{T}_{poly}^0)
				  \subseteq \Omega^1(M,\mathcal{D}_{poly}^0)$
				  is the unique solution of 
				  \begin{align*}
				  \begin{cases}
				  \delta( A) & =R+\nabla A+\frac{1}{2}[A,A],\\
				  \delta^{-1}(A) & =r, \\
					\sigma(A) & = 0.
				  \end{cases}
				  \end{align*}
		    Here $r\in\Omega^0(M,\mathcal{T}_\poly^0)$ is arbitrary but fixed 
		    and has vanishing constant and linear term with respect to the 
				$y$-variables. We
		     refer to $(\nabla,r)$  as globalization data. 
	  \item $\tau \colon \Secinfty_\delta(\mathcal{T}_\poly) \rightarrow 
	        Z^0(\Omega(M,\mathcal{T}_\poly),D)$ denotes the Fedosov Taylor series, given by 
				  \begin{equation*}
				    \tau(a)
					  =
					  a + \delta^{-1}(\nabla\tau(a) + [A,\tau(a)]).
				  \end{equation*}
				  Here one has $\Secinfty_\delta(\mathcal{T}_\poly) = \{ v 	=
				  \sum_k v^{j_0\dots j_k}(x) \frac{\del}{\del y^{j_0}} \wedge 
				  \cdots \wedge \frac{\del}{\del y^{j_k}}\}$, analogously for the 
					polydifferential operators. In addition, $\del_M$ denotes the fiberwise 
					Hochschild differential.
	  \item $\nu \colon \Secinfty_\delta(\Tpolyres) \rightarrow \Tpoly(M)$ 
	        is given by
				  \begin{equation*}
            \nu ( w)(f_0,\dots,f_k) 
            =
            \sigma w(\tau(f_0),\dots,\tau(f_k))
						\quad \text{ for } \quad
						f_1,\dots,f_k \in \Cinfty(M),
           \end{equation*}
				  where $\sigma$ sets the $\D x^i$ and $y^j$ coordinates to zero, 
					analogously for the polydifferential operators.
	  \item $\mathcal{U}^B$ is the fiberwise formality of Kontsevich $\mathcal{U}$
		      twisted by 
	        \begin{equation*}					
				    B 
					  = 
					  D - \D 
            =
            -\D x^i \frac{\del }{\del y^i} - 
					  \D x^i \Gamma^k_{ij}(x)y^j\frac{\del}{\del y^k}
            + \sum_{p\geq 1} \D x^i A^k_{ij_1\dots j_p}(x)y^{j_1}\cdots y^{j_p}
            \frac{\del}{\del y^k}.
					  \end{equation*}
						By the properties of the Kontsevich formality the first two summands 
						do not contribute, i.e. $\mathcal{U}^B = \mathcal{U}^A$.
  \end{itemize}
	One obtains the diagram
  \begin{align*}
    T_{\mathrm{poly}}(M)
	  \stackrel{\tau \circ \nu^{-1}}{\longrightarrow}
	  (\Omega(M,\mathcal{T}_\poly),D)
	  \stackrel{\mathcal{U}^B}{\longrightarrow}
	  (\Omega(M,\mathcal{D}_\poly),D+\del_M)
	  \stackrel{\tau\circ \nu^{-1}}{\longleftarrow}
	  D_{\mathrm{poly}}(M),
  \end{align*}
  where $\tau \circ \nu^{-1}$ are quasi-isomorphisms of 
	DGLAs and where $\mathcal{U}^B$ is an $L_\infty$-quasi-isomorphism. 
	In a next step, the morphism 
	$\mathcal{U}^B \circ \tau \circ \nu^{-1}$ is modified 
	to a quasi-isomorphism
  \begin{equation*}
    U \colon
    \Tpoly(M)
	  \longrightarrow 
	  (Z^0_D(\Omega(M,\Dpolyres)),\del_M,[\,\cdot\,,\,\cdot\,]_G) ,		
  \end{equation*}
	see \cite[Prop.~5]{dolgushev:2005a}.
	By \cite[Lemma~1]{dolgushev:2007a} we know that 
	$\mathcal{U}^B \circ \tau \circ \nu^{-1}$ 
	and $U$ are homotopic.
The desired quasi-isomorphism $\boldsymbol{U}^{(\nabla,r)} = 
\nu \circ \sigma \circ U\colon \Tpoly(M) \rightarrow 
\Dpoly(M)$ is then the composition of $U$ with 
the DGLA isomorphism
\begin{equation*}
  \nu \circ \sigma \colon 
	(Z^0_D(\Omega(M,\Dpolyres)),\del_M,[\,\cdot\,,\,\cdot\,]_G)
	\longrightarrow
	(\Dpoly(M),\del,[\argument,\argument]_G).
\end{equation*}

\begin{corollary}
The formality $\boldsymbol{U}^{(\nabla,r)}$ induces a one-to-one correspondence 
between equivalent formal Poisson structures on $M$ and equivalent 
differential star products on $\Cinfty(M)[[\hbar]]$, i.e. a 
bijection
\begin{equation}
  \label{eq:ClassificationofStarProd}
	\boldsymbol{U}^{(\nabla,r)} \colon
	\Def(\Tpoly(M)[[\hbar]]) 
	\longrightarrow
	\Def(\Dpoly(M)[[\hbar]]).
\end{equation} 
\end{corollary}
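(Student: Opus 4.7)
The plan is to reduce the statement to the general principle, already recalled in the paper, that an $L_\infty$-quasi-isomorphism between DGLAs equipped with complete descending exhaustive filtrations and strictly compatible with those filtrations induces a bijection on equivalence classes of Maurer-Cartan elements. What remains is to set up that filtered picture for the $\hbar$-extension and to identify the Maurer-Cartan/gauge data on both sides with formal Poisson structures and (equivalence classes of) star products.

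First I would $\hbar$-extend the formality morphism to
\begin{equation*}
  \boldsymbol{U}^{(\nabla,r)} \colon \Tpoly(M)[[\hbar]] \longrightarrow \Dpoly(M)[[\hbar]],
\end{equation*}
equipping both DGLAs with the $\hbar$-adic filtration $\mathcal{F}^k = \hbar^k(\argument)[[\hbar]]$, which is complete, descending, exhaustive, and compatible with the Schouten and Gerstenhaber brackets and with the Hochschild differential. The $\hbar$-linearly extended structure maps $\boldsymbol{U}^{(\nabla,r)}_n$ preserve the filtration strictly, so that the hypothesis of Proposition~\ref{prop:twistinglinftymorphisms}~(iii) is satisfied at every filtration level; in particular each $\mathcal{F}^k\boldsymbol{U}^{(\nabla,r)}_1$ is a quasi-isomorphism, which is the usual robust form of the quasi-isomorphism hypothesis needed for deformation-theoretic transfer.

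Second, I would translate both sides of the $\hbar$-filtered Maurer-Cartan picture into the classical objects. On the polyvector side, elements of $\mathcal{F}^1\Tpoly(M)[[\hbar]]^1$ are exactly $\hbar\pi \in \hbar\Secinfty(\Anti^2 TM)[[\hbar]]$, the Maurer-Cartan equation $\tfrac12[\hbar\pi,\hbar\pi]_S = 0$ is the formal Poisson condition, and the gauge action by $\mathcal{F}^1\Tpoly(M)[[\hbar]]^0 = \hbar\Secinfty(TM)[[\hbar]]$ through $\exp([X,\argument]_S)$ recovers equivalence of formal Poisson structures. On the polydifferential side, the shift $\mu_0 + \hbar B$ with $\hbar B \in \mathcal{F}^1\Dpoly(M)[[\hbar]]^1$ satisfies the Maurer-Cartan equation iff $\star = \mu_0 + \hbar B$ is an associative (differential) star product, and the gauge action by $\mathcal{F}^1\Dpoly(M)[[\hbar]]^0$ recovers the usual equivalence of star products by formal series of differential operators starting with the identity. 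Thus the sets $\Def(\Tpoly(M)[[\hbar]])$ and $\Def(\Dpoly(M)[[\hbar]])$ are exactly the two sides of \eqref{eq:ClassificationofStarProd}.

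Third, I would invoke the general bijection result. By Lemma~\ref{lemma:twistinglinftymorphisms} and Proposition~\ref{prop:FmapsEquivMCtoEquiv}, the $L_\infty$-quasi-isomorphism $\boldsymbol{U}^{(\nabla,r)}$ induces a well-defined map between $\Def$ sets. Surjectivity and injectivity follow from the standard obstruction-theoretic argument for filtered $L_\infty$-quasi-isomorphisms: one constructs and modifies pre-images inductively along the tower $\liealg{g}/\mathcal{F}^n\liealg{g}$, at each step using the acyclicity of the mapping cone of $\mathcal{F}^n\boldsymbol{U}^{(\nabla,r)}_1/\mathcal{F}^{n+1}\boldsymbol{U}^{(\nabla,r)}_1$, and then passes to the projective limit using completeness; this is the content of the classical theorem of Kontsevich recalled after Proposition~\ref{prop:FmapsEquivMCtoEquiv}. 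The only mild obstacle is the verification of strict filtration compatibility (and hence of level-wise quasi-isomorphy) for $\boldsymbol{U}^{(\nabla,r)}$, which however follows directly from the $\hbar$-linearity of the extension and the fact that $\boldsymbol{U}^{(\nabla,r)}_1$ is a quasi-isomorphism of the underlying (non-formal) Hochschild--Kostant--Rosenberg complexes.
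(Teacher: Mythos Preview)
Your proposal is correct and follows exactly the line the paper has in mind: the corollary is stated without proof because it is an immediate consequence of $\boldsymbol{U}^{(\nabla,r)}$ being an $L_\infty$-quasi-isomorphism together with the general fact, recalled right after Proposition~\ref{prop:FmapsEquivMCtoEquiv}, that $L_\infty$-quasi-isomorphisms induce bijections on $\Def$. Your three steps (pass to the $\hbar$-adic filtration, identify Maurer--Cartan data with formal Poisson structures resp.\ star products, invoke the standard bijection theorem) are precisely the unpacking of that one sentence.
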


\subsection{Explicit Construction of the Projection $L_\infty$-morphism}
\label{sec:ExplicitProjectionDolg}

As an alternative to the modification of the formality in 
\cite[Prop.~5]{dolgushev:2005a} we want to construct the 
$L_\infty$-quasi-inverse of $\tau\circ \nu^{-1}$. We want to 
use the construction from \cite[Prop.~3.2]{esposito.kraft.schnitzer:2020a:pre} 
that gives a formula for the $L_\infty$-quasi-inverse of an inclusion of 
DGLAs, see also \cite{loday.vallette:2012a} for the existence in more general 
cases. In our setting we have the contraction
\begin{equation}
  \label{eq:Contraction}
  \begin{tikzcd} 
	(\Dpoly(M),\del)
	\arrow[rr, shift left, "\tau \circ \nu^{-1}"] 
  &&   
  \left(\Omega(M,\Dpolyres), \del_M + D \right)
  \arrow[ll, shift left, "\nu \circ \sigma"]
	\arrow[loop right, 
	distance=3em, start anchor={[yshift=1ex]east}, end anchor={[yshift=-1ex]east}]{}{h} ,
\end{tikzcd} 
\end{equation}
where the homotopy $h$ with respect to $\del_M + D$ is 
constructed as follows:
As in the Fedosov construction in the symplectic setting one has a 
homotopy $D^{-1}$ for the differential $D$, see also 
\cite[Thm.~3]{dolgushev:2005a}:

\begin{proposition}
  The map 
	\begin{equation}
	 	\label{eq:HomotopyforFedosovDiff}
		D^{-1}
		=
		- \delta^{-1}\frac{1}{\id - [\delta^{-1}, \nabla + [A,\argument]]} 
		=
		- \frac{1}{\id - [\delta^{-1}, \nabla + [A,\argument]]} \delta^{-1}
	\end{equation}
	is a homotopy for $D$ on $\Omega(M,\Dpolyres)$, i.e. one has
	\begin{equation}
	  \label{eq:HomotopyEqD}
	  X 
		=
		D D^{-1} X + D^{-1}DX + \tau \sigma(X).
	\end{equation}
\end{proposition}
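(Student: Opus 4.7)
The plan is to derive the homotopy identity $\id = D D^{-1} + D^{-1} D + \tau\sigma$ by combining the classical Fedosov identity for $\delta$ alone with a geometric series argument for the perturbation $N := \nabla + [A,\argument]$, so that $D = -\delta + N$. Setting $K := \delta^{-1}N + N\delta^{-1}$ (the graded commutator of two odd operators) and $R := (\id - K)^{-1} = \sum_{n \geq 0} K^n$, the two formulas $D^{-1} = -\delta^{-1}R = -R\delta^{-1}$ agree because $(\delta^{-1})^2 = 0$ implies $[\delta^{-1}, K] = 0$. The geometric series for $R$ converges since $N$ strictly raises the $y$-adic filtration ($A$ has no constant or linear $y$-term and $\nabla$ preserves this filtration) while $\delta^{-1}$ preserves it.

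Using the standard Fedosov identity $\delta\delta^{-1} + \delta^{-1}\delta + \sigma = \id$, one directly computes $D\delta^{-1} + \delta^{-1}D = K + \sigma - \id$. Substituting this into $DD^{-1} + D^{-1}D = -DR\delta^{-1} - R\delta^{-1}D$ together with $RK = R - \id$ (which follows from $R(\id - K) = \id$) rearranges to
\begin{equation*}
DD^{-1} + D^{-1}D = \id - R\sigma + [R, D]\delta^{-1},
\end{equation*}
so that the claim reduces to two subclaims: $[R, D]\delta^{-1} = 0$ and $R\sigma = \tau\sigma$.

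The main technical step is the first subclaim. Differentiating $R(\id - K) = \id$ gives $[D, R] = R[D, K]R$, so it suffices to show $[D, K]\delta^{-1} = 0$. Expanding $[D, K] = DK - KD$ into its four terms and grouping them as $(D\delta^{-1} + \delta^{-1}D)N$ and $N(D\delta^{-1} + \delta^{-1}D)$, plus the pieces $\delta^{-1}(DN + ND)$ and $(DN + ND)\delta^{-1}$, one applies both the identity $D\delta^{-1} + \delta^{-1}D = K + \sigma - \id$ and $D^2 = 0$ in the equivalent form $\delta N + N\delta = N^2$. The $K$-contributions cancel against $N^2\delta^{-1} - \delta^{-1}N^2$, leaving $[D, K] = [\sigma, N] = -N\sigma$, since $\sigma N = 0$ ($N$ always raises form-degree by one while $\sigma$ projects onto form-degree zero). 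Combined with $\sigma\delta^{-1} = 0$ (as $\delta^{-1}$ always contracts in a $y$-factor that $\sigma$ subsequently kills), this yields $[D, K]\delta^{-1} = -N\sigma\delta^{-1} = 0$. This is the most delicate step, requiring careful bookkeeping of graded signs and the crucial cancellation coming from $D^2 = 0$.

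For the second subclaim, for a constant section $a \in \Secinfty_\delta(\Dpolyres)$ one has $\delta^{-1}a = 0$, so $(\delta^{-1})^2 = 0$ yields inductively $K^n a = (\delta^{-1}N)^n a$ for $n \geq 1$. Hence $Ra = (\id - \delta^{-1}N)^{-1}a = \tau(a)$ by direct comparison with the defining fixed-point equation $\tau(a) = a + \delta^{-1}N\tau(a)$, so $R\sigma = \tau\sigma$. Combining the two subclaims gives $DD^{-1} + D^{-1}D = \id - \tau\sigma$ as required.
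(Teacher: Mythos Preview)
Your argument is correct and is precisely the standard homological perturbation computation that the paper defers to by citing \cite[Prop.~6.4.17]{waldmann:2007a}; the paper does not spell out any details itself. One small slip: in your convergence justification the roles of $N$ and $\delta^{-1}$ are reversed --- it is $\delta^{-1}$ that strictly raises the $y$-degree (it always inserts a factor of $y$), while $N=\nabla+[A,\argument]$ merely preserves it (the piece $\D x^i\partial_{x^i}$ of $\nabla$ does not touch $y$ at all). Either way $K=\delta^{-1}N+N\delta^{-1}$ raises the filtration by at least one, so the geometric series for $R$ converges and the rest of your argument goes through verbatim.
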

\begin{proof}
The proof is the same as in the symplectic setting, see e.g. 
\cite[Prop.~6.4.17]{waldmann:2007a}.
\end{proof}

If this homotopy is also compatible with the Hochschild differential $\del_M$, 
then we can indeed apply \cite[Prop.~3.2]{esposito.kraft.schnitzer:2020a:pre} 
to describe the $L_\infty$-morphism extending $\nu \circ \sigma$.
%
%
Let us denote by $(D^{-1})_{k+1}$ the extended homotopy on 
$\Sym^{k+1} (\Omega(M,\Dpolyres)[1])$ and let us write $Q_{\Dpolyres},
Q_{\Dpoly}$ for the induced codifferentials on the symmetric algebras. 
Then we get:

\begin{proposition}
  \label{prop:InftyProjectionDolg}
  The homotopy $D^{-1}$ anticommutes with $\del_M$, whence it is also a 
	homotopy for 
	$\del_M + D$. Therefore, one obtains an $L_\infty$-quasi-isomorphism 
	$P \colon \Sym (\Omega(M,\Dpolyres)[1]) \rightarrow 
	\Sym (\Dpoly(M)[1])$ with recursively defined structure maps
	\begin{equation}
	  \label{eq:defPinfty}
	  P_1^1 
		= 
		\nu \circ \sigma
		\quad \quad \text{ and } \quad \quad 
		P_{k+1}^1 
		= 
		(Q^1_{\Dpoly,2} \circ P^2_{k+1} - P^1_k \circ Q^k_{\Dpolyres,k+1}) 
		\circ (D^{-1})_{k+1}.
	\end{equation}
\end{proposition}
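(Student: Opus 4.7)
The plan is to proceed in two steps. First I verify that $D^{-1}$ is also a contracting homotopy for the combined differential $\del_M + D$ on $\Omega(M,\Dpolyres)$. Second I invoke \cite[Prop.~3.2]{esposito.kraft.schnitzer:2020a:pre} to produce the $L_\infty$-quasi-isomorphism $P$ extending the retraction $\nu\circ\sigma$, and read off the recursion \eqref{eq:defPinfty}.

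For the first step, the crucial observation is that $\del_M$ anticommutes with $D^{-1}$. Since \eqref{eq:HomotopyforFedosovDiff} expresses $D^{-1}$ as a geometric series in $\delta^{-1}$ and $\nabla+[A,\argument]$, it is enough to check the anticommutation on each building block. I would verify the three pieces separately: $\delta^{\pm 1}$ acts only on the fiber coordinates $y^i$ and form components $\D x^i$, which are untouched by the purely algebraic Hochschild differential $\del_M$; the covariant derivative $\nabla$ acts on coefficients and commutes with $\del_M$; and the Gerstenhaber-bracket operator $[A,\argument]_G$ anticommutes with $\del_M$ because $A\in\Omega^1(M,\mathcal{T}_\poly^0)$ is a fiberwise polyvector field, hence $\del_M A = 0$. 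Together with \eqref{eq:HomotopyEqD} this yields
$$(\del_M + D)\, D^{-1} + D^{-1}(\del_M + D) = D D^{-1} + D^{-1} D = \id - \tau\circ\sigma.$$
The standard side conditions $(\nu\circ\sigma)\circ(\tau\circ\nu^{-1})=\id$, together with the annihilation properties for $D^{-1}$ inherited from $\delta^{-1}$, then upgrade \eqref{eq:Contraction} to a genuine deformation retract of DGLAs.

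For the second step, the cited result applies verbatim, producing an $L_\infty$-quasi-inverse whose first Taylor component is $\nu\circ\sigma$. Its higher Taylor components are defined recursively by straightening the obstruction to multiplicativity via the extended homotopy: at order $k+1$ the obstruction is exactly $Q^1_{\Dpoly,2}\circ P^2_{k+1} - P^1_k \circ Q^k_{\Dpolyres,k+1}$, in which $P^2_{k+1}$ and $P^1_k$ are already determined by the previously constructed components $P^1_j$ for $j\leq k$; applying the extended homotopy $(D^{-1})_{k+1}$ yields \eqref{eq:defPinfty}. The main obstacle is concentrated in the first step, specifically the identity $\del_M A = 0$, which rests on the fact that $A$ takes values in fiberwise polyvector fields rather than in general polydifferential operators; everything else is either bookkeeping or a direct application of the cited Homological-Perturbation-type result.
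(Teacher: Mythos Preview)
Your proposal is correct and follows essentially the same approach as the paper: verify that $D^{-1}$ anticommutes with $\del_M$ by reducing to the building blocks $\delta^{-1}$ and $\nabla + [A,\argument]$, then invoke \cite[Prop.~3.2]{esposito.kraft.schnitzer:2020a:pre}. One small slip: $\nabla$ has odd total degree (it carries a $\D x^i$), so it \emph{anticommutes} rather than commutes with $\del_M$; this does not affect your argument, and your justification for the $[A,\argument]$ piece via $\del_M A = 0$ is exactly the right observation.
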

\begin{proof}
The fact that $D^{-1}$ anticommutes with $\del_M$ is clear as $\nabla + [A,\argument]$ and $\delta^{-1}$ anticommute with $\del$, and the rest 
follows directly from 
\cite[Prop.~3.2]{esposito.kraft.schnitzer:2020a:pre}.
\end{proof}

Summarizing, we obtain another global formality:

\begin{corollary}
Given the globalization data $(\nabla,r)$ there exists 
an $L_\infty$-quasi-isomorphism 
  \begin{equation}
  \label{eq:FormalityonM}
	  F^{(\nabla,r)}
  	=
	  P \circ \mathcal{U}^B \circ\tau \circ \nu^{-1} \colon 
	  \Tpoly(M)
	  \longrightarrow
	  \Dpoly(M)
  \end{equation}
  with $F^1_1$ being the Hochschild-Kostant-Rosenberg map.
\end{corollary}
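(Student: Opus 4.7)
The plan is to verify the three assertions of the corollary separately: that $F^{(\nabla,r)}$ is an $L_\infty$-morphism, that it is a quasi-isomorphism, and that its first Taylor component is the HKR map.

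The first two points are essentially immediate from what has been assembled. The composition of $L_\infty$-morphisms is again an $L_\infty$-morphism in a standard way (composition of the underlying coalgebra maps), and all three factors $\tau \circ \nu^{-1}$, $\mathcal{U}^B$, and $P$ target DGLAs viewed as $L_\infty$-algebras via Example~\ref{ex:DGLAasLinfty}, so the composition $F^{(\nabla,r)}$ is an $L_\infty$-morphism between the DGLAs $\Tpoly(M)$ and $\Dpoly(M)$. For the quasi-isomorphism property, $\tau \circ \nu^{-1}$ is a DGLA quasi-isomorphism by Dolgushev, $\mathcal{U}^B$ is a fiberwise Kontsevich formality (an $L_\infty$-quasi-isomorphism invariant under twisting by $B$ by Proposition~\ref{prop:twistinglinftymorphisms}~(iii), applied with the filtration by the symmetric degree in $y$), and $P$ is an $L_\infty$-quasi-isomorphism by Proposition~\ref{prop:InftyProjectionDolg}. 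Composing quasi-isomorphisms of $L_\infty$-algebras yields a quasi-isomorphism, so $F^{(\nabla,r)}$ is as claimed.

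The substantive part is identifying $F^1_1$ with the HKR map. By the composition law for Taylor components, combined with $P^1_1 = \nu \circ \sigma$ and $(\tau \circ \nu^{-1})^1_1 = \tau \circ \nu^{-1}$, one has
\begin{equation*}
  F^1_1
  =
  (\nu \circ \sigma) \circ (\mathcal{U}^B)^1_1 \circ (\tau \circ \nu^{-1}).
\end{equation*}
Next, I would use the formula \eqref{eq:twisteslinftymorphism} together with the observation recalled in the setup that $\mathcal{U}^B = \mathcal{U}^A$, so that
\begin{equation*}
  (\mathcal{U}^B)^1_1(w)
  =
  \sum_{k=0}^\infty \frac{1}{k!}\, \mathcal{U}_{1+k}(A \vee \cdots \vee A \vee w).
\end{equation*}
Since $A \in \Omega^1(M,\mathcal{T}_\poly^0)$ is a $1$-form on $M$ and $\mathcal{U}$ acts fiberwise, the $k$-th summand carries form degree $k$ on $M$. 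The projection $\sigma$ sets $\D x^i = 0$, so all terms with $k \geq 1$ vanish after applying $\sigma$, leaving
\begin{equation*}
  F^1_1(v)
  =
  \nu \circ \sigma \circ \mathcal{U}_1 \circ \tau \circ \nu^{-1}(v).
\end{equation*}

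The remaining task, and the only place where a real computation is needed, is to show that $\nu \circ \sigma \circ \mathcal{U}_1 \circ \tau \circ \nu^{-1} = \hkr$. Unwinding the definition of $\nu$, this amounts to showing that for $v \in \Tpoly(M)$ and $f_0,\dots,f_k \in \Cinfty(M)$ one has
\begin{equation*}
  \sigma\bigl(\mathcal{U}_1(\tau\nu^{-1}(v))(\tau(f_0),\dots,\tau(f_k))\bigr)
  =
  \hkr(v)(f_0,\dots,f_k).
\end{equation*}
Here $\mathcal{U}_1$ is the fiberwise HKR map, which is a purely algebraic antisymmetrization of partial derivatives in $y$. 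The Fedosov Taylor series $\tau(f_i)$ has leading term $f_i$ in $y$-degree zero and higher-order corrections that are at least linear in $y$, and similarly for $\tau\nu^{-1}(v)$. When $\sigma$ subsequently sets $y=0$, only the zeroth-order $y$-contributions of the corresponding $f_i$ and of the coefficients of $\tau\nu^{-1}(v)$ survive. A direct induction on the recursion $\tau(a) = a + \delta^{-1}(\nabla \tau(a) + [A,\tau(a)])$ shows that the zeroth-order term in $y$ of the relevant expression reproduces the antisymmetrized action of $v$ on $f_0,\dots,f_k$, which is precisely $\hkr(v)(f_0,\dots,f_k)$. The main technical obstacle is this last combinatorial check; once it is carried out, the corollary follows.
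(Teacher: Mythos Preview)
Your proof is correct and follows the same approach as the paper's own argument: the paper's proof is the two-line observation $F^{(\nabla,r),1}_1 = P^1_1 \circ (\mathcal{U}^B)^1_1 \circ \tau \circ \nu^{-1} = \nu \circ \sigma \circ \mathcal{U}^1_1 \circ \tau \circ \nu^{-1}$ together with the remark that $\mathcal{U}^1_1$ is the fiberwise HKR map, and you have simply unpacked each of these steps (the vanishing of the higher twisting terms under $\sigma$ by form-degree reasons, and the identification of $\nu \circ \sigma \circ \mathcal{U}_1 \circ \tau \circ \nu^{-1}$ with the global HKR map) in more detail than the paper does. The ``last combinatorial check'' you flag is precisely what the paper leaves implicit.
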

\begin{proof}
We immediately get $F^{(\nabla,r),1}_1 = P^1_1 \circ(\mathcal{U}^B)^1_1 \circ 
\tau\circ\nu^{-1}
= \nu\circ \sigma\circ \mathcal{U}^1_1 \circ \tau \circ \nu^{-1}$ and the 
statement follows since $\mathcal{U}^1_1$ is the fiberwise 
Hochschild-Kostant-Rosenberg map.
\end{proof}

The higher structure maps of $P^1_{k+1}$ of the 
$L_\infty$-projection contain copies of the homotopy $D^{-1}$ that decrease 
the antisymmetric form degree. Therefore, they vanish on 
$\Omega^0(M,\Dpolyres)$ and are needed to 
get rid of the form degrees arising from the twisting with $B$,  
analogously to the modifying of the formality from $\mathcal{U}^B\circ\tau\circ 
\nu^{-1}$ to $U$.

As a last point, we want to remark that we can use the 
$L_\infty$-projection $P$ to obtain a splitting of 
$\Omega(M,\Dpolyres)$ similar to the one used in the 
proof of Lemma~\ref{lemma:HomEquvsQuasiIso}: 
Instead of splitting into the product of 
the cohomology as minimal $L_\infty$-algebra and a linear contractible 
one, we can prove in our setting:

\begin{lemma}
\label{lemma:Dpolysplits}
  One has an $L_\infty$-isomorphism 
	\begin{equation}
	  L \colon 
		\Omega(M,\Dpolyres)
		\longrightarrow 
		\Dpoly(M)\oplus \image [D,D^{-1}].
	\end{equation}
	Here the $L_\infty$-structure on $\Dpoly(M)$ is the usual one consisting 
	of Gerstenhaber bracket and Hochschild differential $\del$ and on 
	$\image [D,D^{-1}]$ the $L_\infty$-structure is just given by the 
	differential $\del_M+ D$. The $L_\infty$-structure on 
	$\Dpoly(M)\oplus \image [D,D^{-1}]$ is the product $L_\infty$-structure.
\end{lemma}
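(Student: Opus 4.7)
The plan is to exploit the homotopy identity \eqref{eq:HomotopyEqD} to split $\Omega(M, \Dpolyres)$ as an $L_\infty$-algebra. Rewriting that identity as $\id = \tau\sigma + [D, D^{-1}]$, where $[D, D^{-1}] = DD^{-1} + D^{-1}D$, gives a direct-sum decomposition of graded vector spaces
\begin{equation*}
  \Omega(M, \Dpolyres)
  =
  \image(\tau) \oplus V,
  \qquad
  V = \image[D, D^{-1}],
\end{equation*}
with complementary projectors $\tau\sigma$ and $[D, D^{-1}]$. Since $D^{-1}$ anticommutes with $\del_M$ by Proposition~\ref{prop:InftyProjectionDolg}, both projectors commute with the total differential $\del_M + D$, so this is a decomposition of complexes: under $\nu \circ \sigma$, the first summand is identified as a DGLA with $(\Dpoly(M), \del, [\argument, \argument]_G)$, while the second summand $(V, \del_M + D)$ is acyclic with contracting homotopy $D^{-1}|_V$ (the homotopy identity on $V$ reduces to $\{\del_M + D, D^{-1}\}|_V = \id_V$).

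The next step is to lift this linear splitting to an $L_\infty$-isomorphism. Because the Taylor components of the product codifferential on $\Dpoly(M) \oplus V$ vanish on all mixed pure tensors, an $L_\infty$-morphism with target $\Dpoly(M) \oplus V$ is equivalent to a pair of $L_\infty$-morphisms to the two factors. I take the $\Dpoly(M)$-factor morphism to be the projection $P$ from Proposition~\ref{prop:InftyProjectionDolg}, which already satisfies $P_1^1 = \nu \circ \sigma$. The $V$-factor morphism $R\colon \Omega(M, \Dpolyres) \to V$ (to $V$ viewed as an abelian $L_\infty$-algebra with differential $\del_M + D$) is built inductively: set $R_1^1 = [D, D^{-1}]$, and for $n \geq 2$ let $R_n^1 = -D^{-1} \Phi_n$, where $\Phi_n$ is the obstruction produced by $R_1, \ldots, R_{n-1}$ together with the Gerstenhaber bracket of $\Omega(M, \Dpolyres)$ to the $n$-th $L_\infty$-morphism relation. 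Standard homotopy-transfer arguments guarantee that $\Phi_n$ takes values in $V$ and is $(\del_M + D)$-closed, so the contracting homotopy $D^{-1}|_V$ solves the equation at every order.

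Setting $L = (P, R)\colon \Omega(M, \Dpolyres) \to \Dpoly(M) \oplus V$, the $L_\infty$-morphism relations split by the product argument above and hold by construction. The linear part $L_1^1 = (\nu \circ \sigma,\, [D, D^{-1}])$ is a linear isomorphism with explicit inverse $(x, v) \mapsto \tau\nu^{-1}(x) + v$, and therefore $L$ is an $L_\infty$-isomorphism (an $L_\infty$-morphism whose linear part is a vector space isomorphism is automatically invertible in the $L_\infty$-sense). The main obstacle is the inductive construction of $R$: one has to verify that at each stage the obstruction $\Phi_n$ is well-defined in $V$ and $(\del_M + D)$-closed, which amounts to a consistency check on the lower-order Taylor components coming from the Jacobi identity of $[\argument, \argument]_G$ on $\Omega(M, \Dpolyres)$. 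This is entirely parallel to the construction of $P$ in Proposition~\ref{prop:InftyProjectionDolg} and to the inductive arguments of \cite[Prop.~3.2]{esposito.kraft.schnitzer:2020a:pre}.
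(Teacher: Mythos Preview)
Your proposal is correct and follows essentially the same route as the paper's proof: both take $P$ from Proposition~\ref{prop:InftyProjectionDolg} as the $\Dpoly(M)$-component and build the $V$-component $R$ (called $F$ in the paper) inductively using the homotopy $D^{-1}$, then conclude that $L=(P,R)$ is an $L_\infty$-isomorphism because its linear part $(\nu\circ\sigma)\oplus[D,D^{-1}]$ is invertible with inverse $(\tau\circ\nu^{-1})\oplus\id$. The paper is slightly more explicit at one point: it writes the recursion as $R^1_n=-D^{-1}\circ R^1_{n-1}\circ(Q_{\Dpolyres})^{n-1}_n$ and observes that $D^{-1}D^{-1}=0$ forces $R^1_k=0$ for $k\geq 3$, which shortcuts your obstruction argument but is not logically necessary.
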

\begin{proof}
By Proposition~\ref{prop:InftyProjectionDolg} we already have an 
$L_\infty$-morphism $P \colon \Omega(M,\Dpolyres)\rightarrow \Dpoly(M)$ 
with first structure map $\nu \circ \sigma$. Now we construct an 
$L_\infty$-morphism $F \colon \Omega(M,\Dpolyres)\rightarrow 
\image [D,D^{-1}]$. We set $F^1_1 = DD^{-1}+D^{-1}D$ and  
$F^1_n = - D^{-1}\circ F^1_{n-1} \circ (Q_{\Dpolyres})^{n-1}_n$ for $n\geq 2$ and note 
that in particular $F^1_k = 0$ for $k\geq 3$ by $D^{-1}D^{-1}=0$. 
In the following, we denote by $Q_{\Dpolyres}$ the $L_\infty$-structure on 
$\Omega(M,\Dpolyres)$ and 
by $\widetilde{Q}$ the $L_\infty$-structure on $\image [D,D^{-1}]$ with 
$\widetilde{Q}^1_1 = -(\del_M +D)$ as only vanishing component. 
We have $F^1_n = D^{-1} \circ L_{\infty,n}$ with $ L_{\infty,n}	=
- F^1_{n-1} \circ (Q_{\Dpolyres})^{n-1}_n$.
By \cite[Lemma~3.1]{esposito.kraft.schnitzer:2020a:pre} we know that 
if $F$ is an $L_\infty$-morphism up to order $n$, i.e. if 
$(\widetilde{Q}F)_k^1= (FQ_{\Dpolyres})^1_k$ for all $k\leq n$, then one has 
$\widetilde{Q}^1_1\circ  L_{\infty,n+1}= - L_{\infty,n+1}\circ 
(Q_{\Dpolyres})^{n+1}_{n+1}$.
By Proposition~\ref{prop:InftyProjectionDolg} 
we know that $F$ is an $L_\infty$-morphism up to order one. Assuming 
it defines an $L_\infty$-morphism up to order $n$, then we get 
with \eqref{eq:HomotopyEqD}
\begin{align*}
  \widetilde{Q}^1_1 \circ F^1_{n+1}
	& =
	- (\del_M + D)\circ D^{-1} \circ L_{\infty,n+1} \\
	& =
	D^{-1} \circ \del_M \circ L_{\infty,n+1}
	- L_{\infty,n+1} + D^{-1}\circ D \circ L_{\infty,n+1} + 
	\tau\circ \sigma \circ L_{\infty,n+1} \\
	& =
	- L_{\infty,n+1} - D^{-1}\circ \widetilde{Q}^1_1 \circ L_{\infty,n+1} \\
	& =
	- L_{\infty,n+1} + F^1_{n+1}\circ (Q_{\Dpolyres})^{n+1}_{n+1}.
\end{align*}
Thus $F$ is an $L_\infty$-morphism up to order $n+1$ and therefore an 
$L_\infty$-morphism.

The universal property of the product gives the desired 
$L_\infty$-morphism $L= P\oplus F$ which is even an isomorphism since its first 
structure map $(\nu\circ \sigma) \oplus (DD^{-1}+D^{-1}D)$ is an isomorphism 
with inverse $(\tau\circ \nu^{-1}) \oplus \id$, see e.g. 
\cite[Prop.~2.2]{canonaco:1999a}. 
\end{proof}

\subsection{Homotopic Global Formalities}

The above globalization of the Kontsevich Formality depends on the choice of 
a covariant derivative. We want to show that globalizations with respect to 
different covariant derivatives are homotopic in the sense of 
Definition~\ref{def:homotopicMorph}. The ideas are similar to those in 
the proof of \cite[Theorem~2.6]{bursztyn.dolgushev.waldmann:2012a}: 
observe that changing the covariant derivative corresponds to twisting with a 
Maurer-Cartan element which is equivalent to zero, and apply 
Proposition~\ref{prop:TwistMorphHomEqu}.

\begin{remark}[Filtrations on Fedosov resolutions] 
  In order to apply Proposition~\ref{prop:TwistMorphHomEqu} we need 
	complete descending and exhaustive filtrations on the Fedosov 
	resolutions. As in \cite[Appendix~C]{bursztyn.dolgushev.waldmann:2012a} 
	we  assign to $\D x^i$ and $y^i$ the degree $1$ and to $\frac{\del}{\del y^i}$ 
	the degree $-1$ and consider the induced descending filtration. The filtration
	on $\Omega(M,\Tpolyres)$ is complete and bounded below since
	\begin{equation*}
	  \Omega(M,\Tpolyres)
		\cong 
		\varprojlim \Omega(M,\Tpolyres) / \mathcal{F}^k\Omega(M,\Tpolyres)
		\quad \quad \text{ and } \quad \quad
	  \Omega(M,\Tpolyres) 
		=
		\mathcal{F}^{-d}\Omega(M,\Tpolyres),
	\end{equation*}
	where $d$ is the dimension of $M$. In the case of the differential operators 
	the filtration is unbounded in both directions. 
	Instead of $\Dpolyres$ we consider from now on its $y$-adic completion 
	without changing the notation. This is 
	the completion with respect to the filtration induced by assigning 
	$y^i$ the degree $1$ and $\frac{\del}{\del y^i}$ the degree $-1$. 
	The globalization of the formality works just
	the same and one obtains the desired properties
	\begin{equation*}
	  \Omega(M,\Dpolyres)
		\cong 
		\varprojlim \Omega(M,\Dpolyres) / \mathcal{F}^k\Omega(M,\Dpolyres)
		\quad \quad \text{ and } \quad \quad
	  \Omega(M,\Dpolyres) 
		=
		\bigcup_k \mathcal{F}^{k}\Omega(M,\Dpolyres).
	\end{equation*}
\end{remark}

Let $(\nabla',r')$ be a second pair of globalization data, then  there is a second diagram 
\begin{align*}
  T_{\mathrm{poly}}(M)
  \stackrel{\tau' \circ \nu^{-1}}{\longrightarrow}
  (\Omega(M,\Tpolyres),D')
  \stackrel{\mathcal{U}^{B'}}{\longrightarrow}
  (\Omega(M,\Dpolyres),D'+\del_M)
  \stackrel{\tau'\circ (\nu')^{-1}}{\longleftarrow}
  D_{\mathrm{poly}}(M),
\end{align*}
where 
\begin{equation*}
  D' 
	= 
	-\delta + \nabla' + [A',\argument] 
	= 
	\D + [B',\argument],
\end{equation*}
and $A'$ is the unique solution of 
	\begin{align*}
				  \begin{cases}
				  \delta (A') & =R'+\nabla' A'+\frac{1}{2}[A',A'],\\
				  \delta^{-1}(A')& =r', \\
					\sigma(A') & = 0.
				  \end{cases}
	\end{align*}
In the case of polyvector fields one easily sees that $\nu=\nu'$. 
Note
\begin{equation*}
  \nabla' - \nabla
	=
	\left[-\D x^i y^j (\Gamma'^k_{ij}-\Gamma^k_{ij})\frac{\del}{\del y^k} ,\argument\right]
	=
	[\delta s,\argument]
\end{equation*}
for $s = - \frac{1}{2}y^iy^j (\Gamma'^k_{ij}-\Gamma^k_{ij})\frac{\del}{\del y^k}$.
Thus we get
\begin{equation}
  \label{eq:Aprime}
	D' 
	= 
	-\delta + \nabla + [A' + \delta s, \argument]
	=
	-\delta + \nabla + [\widetilde{A},\argument]
\end{equation}
and since $R' = R + \nabla \delta s + \frac{1}{2}[\delta s,\delta s]$ we know 
that $\widetilde{A} = A' + \delta s$ is the unique solution of 
	\begin{align*}
				  \begin{cases}
				  \delta (\widetilde{A})& =R+\nabla \widetilde{A}
				  +\frac{1}{2}[\widetilde{A},\widetilde{A}],\\
				  \delta^{-1}(\widetilde{A})& =r'+s , \\
					\sigma(\widetilde{A}) & = 0.
				  \end{cases}
	\end{align*}


As in 
\cite[Appendix~C]{bursztyn.dolgushev.waldmann:2012a} one can now show 
that $B$ and $B'$ can be interpreted as equivalent Maurer-Cartan elements:

\begin{proposition}
  \label{prop:BBprimeEquivalent}
	There exists an element
	\begin{equation}
    \label{eq:RecDefofh}
	  h 
	  \in
	  \mathcal{F}^1\Omega^0(M,\Tpolyres^0)
  \end{equation}
	that is at least quadratic in the fiber coordinates $y$ such that 
	one has 
	\begin{equation}
	  B' - B
		=
		\widetilde{A} - A
		=
		-\frac{\exp([h,\argument])-\id}{[h,\argument]}Dh
		\in 
		\mathcal{F}^1\Omega^1(M,\Tpolyres^0)
	\end{equation}
	and 
  \begin{equation}
    \exp([h,\argument])\circ D \circ\exp([-h,\argument])
	  =
	  D'.
  \end{equation}
  Thus $B'-B$ is gauge equivalent to zero in	
	in $(\Omega(M,\Tpolyres),D)$ and 
	$(\Omega(M,\Dpolyres),D+ \del_M)$, 
	where $h$ implements the gauge equivalences.
\end{proposition}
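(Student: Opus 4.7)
The plan is to use \eqref{eq:Aprime}, which rewrites $D'$ with the \emph{same} connection $\nabla$ as $D$, so that $B'-B = \widetilde{A}-A$ lies in $\mathcal{F}^1\Omega^1(M,\Tpolyres^0)$. In the DGLA $(\Omega(M,\Tpolyres),\D,[\argument,\argument])$, where $\D$ is the de Rham differential along $M$, both $B$ and $B'$ are Maurer-Cartan elements by the Fedosov equations for $A$ and $\widetilde{A}$. By Corollary~\ref{cor:GaugeEquivMCTwistsQuis} the entire statement reduces to exhibiting $h \in \mathcal{F}^1\Omega^0(M,\Tpolyres^0)$, at least quadratic in $y$, such that
\begin{equation*}
  \widetilde{A}-A
  =
  -\frac{\exp([h,\argument])-\id}{[h,\argument]}(Dh),
\end{equation*}
since this is precisely the gauge-action identity $B' = \exp([h,\argument])\acts B$ and automatically implies the conjugation $\exp([h,\argument])\circ D\circ \exp([-h,\argument]) = D'$.

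First I would construct $h$ by a Fedosov-type iteration in the $y$-adic filtration. Applying $\delta^{-1}$ to the equation above yields a fixed-point relation of schematic form
\begin{equation*}
  h
  =
  -\delta^{-1}(\widetilde{A}-A) + \delta^{-1}\bigl(\Phi(h)\bigr),
\end{equation*}
where $\Phi(h)$ collects the nonlinear contributions together with all terms involving $\nabla + [A,\argument]$. Because $\delta^{-1}$ strictly raises the $y$-degree by one and the same holds for every contribution in $\Phi$, a bootstrap order-by-order in $y$ produces a unique solution $h$ in the $y$-adic completion. The leading term $-\delta^{-1}(\widetilde{A}-A)$ is at least quadratic in $y$ since $\widetilde{A}-A$ is at least linear in $y$, and this property propagates through the recursion. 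The solvability at each step is ensured by the standard Fedosov contraction argument, since $A$ and $\widetilde{A}$ are themselves defined as unique solutions of Fedosov equations with $\delta^{-1}$-normalizations $r$ and $r'+s$ respectively.

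Once $h$ has been constructed, the conjugation formula on $(\Omega(M,\Tpolyres),D)$ follows from Corollary~\ref{cor:GaugeEquivMCTwistsQuis}. To transport the same statement to $(\Omega(M,\Dpolyres),D+\del_M)$, observe that $h$ is a fiberwise vector field, so its Hochschild--Kostant--Rosenberg image $\hkr(h)$ is a fiberwise derivation. Two points suffice: the Schouten and Gerstenhaber brackets coincide under $\hkr$ between elements of $\Tpolyres^0$, since both restrict to the commutator of derivations, so the gauge action transports verbatim from polyvector fields to polydifferential operators; and $\del_M \hkr(h) = 0$ because $\del_M$ vanishes on derivations, which forces $[\hkr(h),\argument]_G$ to anticommute trivially with $\del_M$ and hence $\exp([\hkr(h),\argument]_G)$ to commute with $\del_M$. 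Combining these two facts with the polyvector-field identity yields the conjugation with respect to the full differential $D+\del_M$ on $\Omega(M,\Dpolyres)$.

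The main obstacle is the recursive construction of $h$ itself, specifically the simultaneous verification that the iteration converges in the $y$-adic completion, preserves the quadratic-in-$y$ property, and produces a solution of the full nonlinear gauge equation rather than merely a first-order approximation. All three are controlled by the fact that $\delta^{-1}$ strictly raises $y$-degree, so the tail of the iteration lives in arbitrarily high filtration degree; this is a direct adaptation of the argument in \cite[Appendix~C]{bursztyn.dolgushev.waldmann:2012a}.
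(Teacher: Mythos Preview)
Your proposal is correct. For the existence of $h$ you unpack exactly the recursive construction that the paper simply cites from \cite[Appendix~C]{bursztyn.dolgushev.waldmann:2012a}, so there is no real difference there.

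Where you do diverge is in the transport to $(\Omega(M,\Dpolyres),D+\del_M)$. You argue directly: $h$ is a fiberwise vector field, hence a derivation in $\Dpolyres^0$, so $\del_M h = 0$ and the Schouten and Gerstenhaber brackets agree on it; the gauge identity therefore carries over verbatim and $\exp([h,\argument]_G)$ commutes with $\del_M$. The paper instead invokes Proposition~\ref{prop:FmapsEquivMCtoEquiv}: it pushes the homotopy path $B(t)$ (with $\lambda(t)=h$) forward along the $L_\infty$-morphism $\mathcal{U}^B$, and then uses the known property of the Kontsevich formality that its higher structure maps vanish whenever all inputs are vector fields, so that $\mathcal{U}^{B,1}(\cc{\exp}(B(t)))=B(t)$ and $\mathcal{U}^{B,1}(h\vee\exp(B(t)))=h$. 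Both arguments ultimately rest on the same fact (vector fields behave trivially under the passage from $\Tpolyres$ to $\Dpolyres$); yours is more elementary and self-contained, while the paper's route reuses the $L_\infty$ machinery already in place and makes the later application of Proposition~\ref{prop:TwistMorphHomEqu} more transparent.
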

\begin{proof}
For the existence of the element $h\in \mathcal{F}^1\Omega^0(M,\Tpolyres^0)$ 
encoding the gauge equivalence in the polyvector fields see 
\cite[Appendix~C]{bursztyn.dolgushev.waldmann:2012a}. Thus we 
have a path
\begin{equation*}
  B(t)
	=
	-\frac{\exp([th,\argument])-\id}{[th,\argument]}D(th)
	\in \mathcal{F}^1\widehat{\Omega^1(M,\Tpolyres^0)[t]}
\end{equation*} 
that satisfies $B(0)=0$, $B(1)=B'-B$ and
\begin{equation*}
	\frac{\D B(t)}{\D t}
	=
	Q^1(\lambda(t) \vee \exp(B(t)))
	\quad \quad \text{with}\quad\quad
	\lambda(t)=h.
\end{equation*}
The formality $\mathcal{U}^B$ satisfies in the notation of 
Proposition~\ref{prop:FmapsEquivMCtoEquiv}
\begin{equation*}
  \widetilde{B}(t)
	=
	\mathcal{U}^{B,1}(\cc{\exp}(B(t))
	=
	B(t)
	\quad \quad\text{ and }\quad \quad
	\widetilde{\lambda}(t)
	=
	\mathcal{U}^{B,1}(h\vee \exp(B(t)))
	=
	h
\end{equation*}
since the higher orders of the Kontsevich formality vanish if one only inserts 
vector fields.
\end{proof}

Now it follows directly from Proposition~\ref{prop:TwistMorphHomEqu} 
and $(\mathcal{U}^B)^{B'-B}  = \mathcal{U}^{B'}$
that the twisted formalities are homotopic.

\begin{corollary}
  The $L_\infty$-morphisms $\mathcal{U}^{B}$ and $e^{-[h,\argument]}\circ 
	\mathcal{U}^{B'}\circ e^{[h,\argument]}$ are homotopic.
\end{corollary}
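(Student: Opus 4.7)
The plan is to reduce the statement to a direct application of Proposition~\ref{prop:TwistMorphHomEqu}, after identifying all the data provided by Proposition~\ref{prop:BBprimeEquivalent}. Concretely, I would take the source DGLA to be $(\Omega(M,\Tpolyres),D,[\argument,\argument])$ and the target DGLA to be $(\Omega(M,\Dpolyres),D+\del_M,[\argument,\argument])$, with $F = \mathcal{U}^B$ as the $L_\infty$-morphism between them, and the Maurer-Cartan element $\pi = B'-B \in \mathcal{F}^1\Omega^1(M,\Tpolyres^0)$.

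The first step is to verify the hypotheses of Proposition~\ref{prop:TwistMorphHomEqu}. By Proposition~\ref{prop:BBprimeEquivalent} the element $\pi = B'-B$ is gauge equivalent to zero in the source DGLA via $h \in \mathcal{F}^1\Omega^0(M,\Tpolyres^0)$, so in the notation of Proposition~\ref{prop:TwistMorphHomEqu} one has $A(t) = th$ and thus $A(1) = h$. Next I would apply Proposition~\ref{prop:FmapsEquivMCtoEquiv} to produce the corresponding data on the target side: by the computation recorded at the end of the proof of Proposition~\ref{prop:BBprimeEquivalent}, the Kontsevich formality preserves the path and the infinitesimal generator of the gauge, namely $S(t) = \mathcal{U}^{B,1}(\cc{\exp}(B(t))) = B(t)$ and $\widetilde{\lambda}(t) = h$, because the higher Taylor components of $\mathcal{U}$ vanish on polyvector fields. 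This shows that $S = B'-B$ is also gauge equivalent to zero in the target DGLA via the same element $h$, so we may choose $A'(t) = th$ and hence $A'(1) = h$.

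The second step is to identify the twisted morphism $(\mathcal{U}^B)^{B'-B}$. By the explicit formula \eqref{eq:twisteslinftymorphism} for the structure maps of a twisted $L_\infty$-morphism, twisting $\mathcal{U}^B$ by $B'-B$ amounts to shifting the Maurer-Cartan element inserted into $\mathcal{U}$ from $B$ to $B + (B'-B) = B'$. Thus $(\mathcal{U}^B)^{B'-B} = \mathcal{U}^{B'}$ as $L_\infty$-morphisms from $(\Omega(M,\Tpolyres),D+[B'-B,\argument]) = (\Omega(M,\Tpolyres),D')$ to the corresponding twisted target DGLA.

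With all data in place, Proposition~\ref{prop:TwistMorphHomEqu} applies and yields exactly the desired homotopy $\mathcal{U}^B \sim e^{[-h,\argument]} \circ (\mathcal{U}^B)^{B'-B} \circ e^{[h,\argument]} = e^{[-h,\argument]} \circ \mathcal{U}^{B'}\circ e^{[h,\argument]}$. The only subtlety I expect is bookkeeping with the two filtrations: to invoke Proposition~\ref{prop:TwistMorphHomEqu} one needs the Fedosov resolutions to carry complete descending and exhaustive filtrations with respect to which $B,B',h$ sit in $\mathcal{F}^1$, and this is precisely ensured by the $y$-adic completion and the degree conventions recalled in the preceding remark. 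Given that setup, the statement is a direct corollary and no further calculation is required.
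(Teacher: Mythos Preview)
Your proposal is correct and follows exactly the paper's approach: the corollary is stated in the paper as an immediate consequence of Proposition~\ref{prop:TwistMorphHomEqu} together with the identity $(\mathcal{U}^B)^{B'-B} = \mathcal{U}^{B'}$, and you have simply spelled out the identifications (including $A(1)=A'(1)=h$ via the vanishing of the higher Kontsevich maps on vector fields) that make this application go through.
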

	
Moreover, the Fedosov Taylor series is compatible in the following sense:

\begin{corollary}
  For all $X \in T_\poly(M)$ one has
	\begin{equation}
	  e^{[h,\argument]} \circ \tau \circ \nu^{-1}(X)
		=
		\tau' \circ (\nu')^{-1}(X).
	\end{equation}
\end{corollary}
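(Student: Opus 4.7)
The plan is to characterise the Fedosov Taylor series by its two defining properties and check that $e^{[h,\argument]}\tau\circ \nu^{-1}(X)$ satisfies both of these with respect to the primed data. Recall that for $a\in \Secinfty_\delta(\Tpolyres)$ the element $\tau(a)\in \Omega^0(M,\Tpolyres)$ is uniquely determined by $D\tau(a)=0$ and $\sigma(\tau(a))=a$, and analogously $\tau'(a)$ is the unique element satisfying $D'\tau'(a)=0$ and $\sigma(\tau'(a))=a$. Since $\nu=\nu'$ in the polyvector field case, it suffices to verify the two primed conditions for $a=\nu^{-1}(X)=(\nu')^{-1}(X)$.

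For the $D'$-closedness I invoke the conjugation identity $\exp([h,\argument])\circ D\circ \exp([-h,\argument])=D'$ from Proposition~\ref{prop:BBprimeEquivalent}, which gives immediately
\begin{equation*}
  D'\bigl(e^{[h,\argument]}\tau(a)\bigr)
  =
  e^{[h,\argument]}\,D\tau(a)
  =
  0.
\end{equation*}
This step is essentially free.

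The matching of symbols amounts to showing $\sigma\circ e^{[h,\argument]}=\sigma$ on $\Omega^0(M,\Tpolyres)$, equivalently $\sigma\circ [h,\argument]^n=0$ for every $n\geq 1$. For this I use the filtration from the preceding remark assigning $y^i$ and $\D x^i$ degree $+1$ and $\frac{\del}{\del y^i}$ degree $-1$. The key input is that $h\in \mathcal{F}^1\Omega^0(M,\Tpolyres^0)$ is at least quadratic in the fibre coordinates $y$; a direct inspection of the Schouten bracket then yields $[h,\argument]\colon \mathcal{F}^k\to \mathcal{F}^{k+1}$. Since $\sigma$ annihilates $\mathcal{F}^1$ by definition, $\sigma\circ [h,\argument]=0$, hence $\sigma(e^{[h,\argument]}\tau(a))=\sigma\tau(a)=a$.

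Combining the two checks, $e^{[h,\argument]}\tau\circ \nu^{-1}(X)$ is a $D'$-closed element of $\Omega^0(M,\Tpolyres)$ with symbol $(\nu')^{-1}(X)$, so uniqueness of the Fedosov Taylor series forces it to equal $\tau'\circ (\nu')^{-1}(X)$. The only nontrivial point is the filtration estimate for $[h,\argument]$, which is routine but genuinely relies on $h$ being at least quadratic in $y$; everything else is formal.
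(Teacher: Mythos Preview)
Your proof is correct and follows essentially the same approach as the paper: both verify that $e^{[h,\argument]}\tau\nu^{-1}(X)$ is $D'$-closed via the conjugation identity from Proposition~\ref{prop:BBprimeEquivalent}, and that its symbol is unchanged because $h$ is at least quadratic in $y$, then invoke the uniqueness of the Fedosov Taylor series. The paper compresses this into the single line $e^{[h,\argument]}\tau\nu^{-1}(X)=\tau'\sigma\,e^{[h,\argument]}\tau\nu^{-1}(X)=\tau'\nu^{-1}(X)$, but your more explicit unpacking of the two defining properties is the same argument.
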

\begin{proof}
By the above proposition $\exp([h,\argument])$ maps the kernel of $D$ into the 
kernel of $D'$. Therefore,
\begin{align*}
  e^{[h,\argument]} \circ \tau \circ \nu^{-1} (X)
	=
	\tau' \circ \sigma \circ  e^{[h,\argument]} \circ \tau  \circ \nu^{-1}(X)
	=
  \tau' \circ \nu^{-1} (X)
\end{align*}
since $h$ is at least quadratic in the $y$ coordinates.
\end{proof}

Similarly, one has on the differential operator side the following identity:

\begin{lemma}
  \label{lemma:hintertwinesProj}
  For all $X \in  Z^0_{D'}(\Omega(M,\Dpolyres))$ one has
	\begin{equation}
	  \nu\circ\sigma \circ e^{-[h,\argument]}(X)
		=
		\nu'\circ \sigma(X).
	\end{equation}
\end{lemma}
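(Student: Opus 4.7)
The plan is to unfold the definitions of $\nu \circ \sigma$ and $\nu' \circ \sigma$, apply the preceding corollary relating $\tau'$ to $\tau$ on functions, and use that $h$ is a fiberwise vector field so that $e^{[h,\argument]}$ commutes with the evaluation of polydifferential operators on functions.

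As a first step I would observe that for any $Y \in Z^0_D(\Omega(M,\Dpolyres))$ one has
\begin{equation*}
  (\nu \circ \sigma)(Y)(f_0,\dots,f_k)
  =
  \sigma\bigl(Y(\tau f_0,\dots,\tau f_k)\bigr),
\end{equation*}
since the $\D x^i$-components and the positive $y^j$-order components of $Y$ only produce contributions killed by the outer $\sigma$, which sets $\D x^i = 0$ and $y^j = 0$. The analogous identity holds for $\nu' \circ \sigma$ with $\tau'$, so the lemma reduces to the equality
\begin{equation*}
  \sigma\bigl((e^{-[h,\argument]}X)(\tau f_0,\dots,\tau f_k)\bigr)
  =
  \sigma\bigl(X(\tau' f_0,\dots,\tau' f_k)\bigr)
\end{equation*}
in $\Cinfty(M)$.

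Next I would insert $\tau' f_i = e^{[h,\argument]}\tau f_i$ from the preceding corollary and exploit that $h \in \mathcal{F}^1\Omega^0(M,\Tpolyres^0)$ is a fiberwise vector field, so that $[h,\argument]_G$ is a graded derivation of the evaluation pairing of fiberwise polydifferential operators on formal functions. Exponentiating (which converges on the complete filtered spaces) then yields the automorphism identity
\begin{equation*}
  e^{[h,\argument]}\bigl(Y(g_0,\dots,g_k)\bigr)
  =
  (e^{[h,\argument]}Y)(e^{[h,\argument]}g_0,\dots,e^{[h,\argument]}g_k).
\end{equation*}
Applied with $Y = e^{-[h,\argument]}X$ and $g_i = \tau f_i$, this rewrites the right-hand side of the reduced equality as $\sigma(e^{[h,\argument]}Z)$, where $Z := (e^{-[h,\argument]}X)(\tau f_0,\dots,\tau f_k)$, so everything collapses to showing $\sigma(e^{[h,\argument]}Z) = \sigma(Z)$.

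Finally I would expand the exponential and reduce to checking $\sigma([h,\argument]^n Z) = 0$ for every $n \geq 1$. Here the hypothesis that $h$ is at least quadratic in the fiber coordinates enters: since $[h,W]_G = h(W)$ on any formal function $W$ of $(x,y)$ and $h = y^iy^j Q^k_{ij}(x)\frac{\del}{\del y^k} + O(y^3)$, each application of $h$ raises the $y$-valuation by at least one, so $[h,\argument]^n Z$ has $y$-valuation $\geq n$ and vanishes at $y=0$. The main technical nuisance is the second step, namely verifying cleanly the derivation and automorphism identity for $e^{[h,\argument]}$ with respect to the evaluation pairing and its convergence on the completed spaces of fiberwise polydifferential operators; once this is in place, the rest is careful book-keeping of $y$-valuations and form degrees.
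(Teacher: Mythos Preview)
Your proposal is correct and follows essentially the same route as the paper. Both arguments rest on the same three ingredients: the identity $\tau' = e^{[h,\argument]}\tau$ on functions, the automorphism property $e^{[h,\argument]}(Y(g_0,\dots,g_k)) = (e^{[h,\argument]}Y)(e^{[h,\argument]}g_0,\dots,e^{[h,\argument]}g_k)$, and the fact that $\sigma \circ e^{\pm[h,\argument]} = \sigma$ on formal functions because $h$ is at least quadratic in $y$; the paper merely runs the chain of equalities in the opposite direction and writes $e^{\pm h}$ for the action on functions instead of $e^{\pm[h,\argument]}$.
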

\begin{proof}
Using the definition of $\nu$ 		
we compute for $X\in  Z^0_{D'}(\Omega(M,\Dpolyres^{n-1}))$ 
and $f_1,\dots,f_n \in \Cinfty(M)$				
\begin{align*}
  ( \nu\circ\sigma \circ e^{-[h,\argument]} X) (f_1,\dots,f_n)
	& =	
	\sigma ((\sigma \circ e^{-[h,\argument]}X) 
	(\tau(f_1),\dots,\tau(f_n)) ) \\
	& =
	\sigma ( e^{-h}( X  
	(e^{h}\tau(f_1),\dots,e^{h}\tau(f_n))))  \\
	& =
	\sigma ((\sigma \circ X ) 
	(\tau'(f_1),\dots,\tau'(f_n)))  \\
	& =
	(\nu'\circ \sigma X)(f_1,\dots,f_n)
\end{align*}
and the statement is shown.
\end{proof}

As a last preparation we want to compare the two different 
$L_\infty$-projections 
$P'$ and $P \circ e^{-[h,\argument]}$ from $(\Omega(M,\Dpolyres),\del_M+D')$ 
to $(\Dpoly(M),\del)$. 

\begin{lemma}
  \label{lemma:ProjHomotopic}
  The $L_\infty$-projections $P'$ and $P \circ e^{-[h,\argument]}$ are 
	homotopic.
\end{lemma}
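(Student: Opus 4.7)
The plan is to exhibit both $P'$ and $P\circ e^{-[h,\argument]}$ as $L_\infty$-quasi-inverses of the common $L_\infty$-quasi-isomorphism $\tau'\circ(\nu')^{-1}\colon \Dpoly(M)\to(\Omega(M,\Dpolyres),\del_M+D')$, and then to invoke the uniqueness-up-to-homotopy of quasi-inverses, i.e. the Corollary following Lemma~\ref{lemma:HomEquvsQuasiIso}.

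First I would establish the analog of the Corollary preceding Lemma~\ref{lemma:hintertwinesProj} on the differential operator side, namely $e^{[h,\argument]}\circ\tau\circ\nu^{-1}=\tau'\circ(\nu')^{-1}$ as maps $\Dpoly(M)\to Z^0_{D'}(\Omega(M,\Dpolyres))$. The argument is identical to the polyvector fields case: by Proposition~\ref{prop:BBprimeEquivalent}, $e^{[h,\argument]}$ sends $\ker D$ to $\ker D'$; because $h$ is at least quadratic in the $y$-variables, $\sigma\circ e^{\pm[h,\argument]}=\sigma$ on the relevant fiber-zero classes; and $\nu'\circ\sigma$ is inverse to $\tau'\circ(\nu')^{-1}$ on that subspace. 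Consequently
\begin{equation*}
(P\circ e^{-[h,\argument]})\circ(\tau'\circ(\nu')^{-1})
= P\circ\bigl(e^{-[h,\argument]}\circ\tau'\circ(\nu')^{-1}\bigr)
= P\circ(\tau\circ\nu^{-1}).
\end{equation*}

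Next I would verify $P\circ(\tau\circ\nu^{-1})=\id_{\Dpoly(M)}$ as strict $L_\infty$-morphisms (and similarly for the primed version). On first Taylor components, $P^1_1\circ(\tau\circ\nu^{-1})^1_1=\nu\circ\sigma\circ\tau\circ\nu^{-1}=\id$. For the higher components the key is the side condition $D^{-1}\circ\tau\circ\nu^{-1}=0$: since $\nu^{-1}(X)$ has pure fiber content, $\delta^{-1}\circ\nu^{-1}(X)=0$, and the Fedosov recursion for $\tau$ together with $(\delta^{-1})^2=0$ propagates this to $\delta^{-1}\circ\tau\circ\nu^{-1}=0$; the geometric-series form \eqref{eq:HomotopyforFedosovDiff} of $D^{-1}$ then gives the claimed vanishing. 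Because the symmetric extension $(D^{-1})_{k+1}$ of \cite[Prop.~3.2]{esposito.kraft.schnitzer:2020a:pre} contains a factor of $D^{-1}$ in each summand, it vanishes on $(\tau\circ\nu^{-1})^{\vee(k+1)}$, and the recursion \eqref{eq:defPinfty} then forces $P^1_{k+1}\circ(\tau\circ\nu^{-1})^{\vee(k+1)}=0$ for all $k\geq 1$.

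Finally, by Lemma~\ref{lemma:HomEquvsQuasiIso} the quasi-isomorphism $\tau'\circ(\nu')^{-1}$ admits an $L_\infty$-quasi-inverse $R$; in particular $\tau'\circ(\nu')^{-1}\circ R\sim\id$. For each $X\in\{P',\, P\circ e^{-[h,\argument]}\}$, Proposition~\ref{prop:CompofHomotopicHomotopic} applied to this homotopy yields
\begin{equation*}
X=X\circ\id\sim X\circ\bigl(\tau'\circ(\nu')^{-1}\bigr)\circ R=\id\circ R=R,
\end{equation*}
where the last equality uses the identities established in the previous two steps. Therefore $P'\sim R\sim P\circ e^{-[h,\argument]}$, as required. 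The main obstacle is the middle step: tracing through the recursive HPT formula to confirm that the side condition $D^{-1}\circ\tau\circ\nu^{-1}=0$ propagates to $(D^{-1})_{k+1}$ on symmetric powers. This is the single point at which the argument depends on the detailed structure of the construction of $P$; everything else is formal manipulation of the objects already provided by the preliminaries.
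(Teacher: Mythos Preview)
Your proof is correct and follows the same overall strategy as the paper: both arguments first verify that $P'\circ\tau'\circ(\nu')^{-1}=\id$ and $(P\circ e^{-[h,\argument]})\circ\tau'\circ(\nu')^{-1}=\id$ as strict $L_\infty$-morphisms, and then appeal to a uniqueness-of-quasi-inverse principle. There are, however, two differences worth noting.

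For the first step the paper does not re-prove the identity $e^{[h,\argument]}\circ\tau\circ\nu^{-1}=\tau'\circ(\nu')^{-1}$ on the polydifferential side; it simply invokes Lemma~\ref{lemma:hintertwinesProj} together with the remark (stated just before Lemma~\ref{lemma:Dpolysplits}) that the higher Taylor components $P^1_{k+1}$ vanish on $\Omega^0(M,\Dpolyres)$ because $D^{-1}$ lowers form degree. This is equivalent to your side-condition argument via $\delta^{-1}\tau\nu^{-1}=0$, but shorter: since $\tau\circ\nu^{-1}$ lands in form degree zero, the point you flag as ``the main obstacle'' is immediate and does not require unwinding the recursion for $(D^{-1})_{k+1}$.

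The more substantive difference is in the uniqueness step. You invoke Lemma~\ref{lemma:HomEquvsQuasiIso} abstractly to produce an auxiliary quasi-inverse $R$ and compare both candidates to it. The paper instead \emph{re-runs} the proof of Lemma~\ref{lemma:HomEquvsQuasiIso} with the concrete splitting of Lemma~\ref{lemma:Dpolysplits} in place of the general minimal-model decomposition: it builds an explicit path $L(t)=L^{-1}\circ M(t)\circ L$ showing $\tau\circ\nu^{-1}\circ P\sim\id$ (hence by symmetry $\tau'\circ(\nu')^{-1}\circ P'\sim\id$), and then concludes $P\circ e^{-[h,\argument]}\sim P\circ e^{-[h,\argument]}\circ\tau'\circ(\nu')^{-1}\circ P'=P'$ directly. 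Your route is cleaner, but the paper's explicit homotopy is built entirely from $D^{-1}$, $\tau$, $\nu$, $\sigma$ and the product structure, all of which are $\group{G}$-equivariant when the covariant derivative is; this is exactly what is used in the equivariant Proposition at the end of Section~\ref{sec:ApplicationtoGlobalization}, which the abstract invocation of Lemma~\ref{lemma:HomEquvsQuasiIso} would not provide.
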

\begin{proof}
Since the higher structure maps of $P$ and $P'$ vanish on the zero forms, 
we have by Lemma~\ref{lemma:hintertwinesProj} 
\begin{align*}
  P\circ e^{-[h,\argument]} \circ \tau'\circ (\nu')^{-1} 
  =
  \nu \circ \sigma \circ e^{-[h,\argument]} \circ \tau' \circ (\nu')^{-1}
	=
	\nu'\circ \sigma \circ \tau' \circ( \nu')^{-1}
  = 
	\id_{\Dpoly(M)}.
\end{align*}
Instead of directly using Lemma~\ref{lemma:HomEquvsQuasiIso}, we 
recall the splitting from Lemma~\ref{lemma:Dpolysplits} and adapt the 
proof of Lemma~\ref{lemma:HomEquvsQuasiIso}. Define 
\begin{equation*}
  M(t)\colon
	\Dpoly(M)\oplus \image [D,D^{-1}] \ni 
	(D_1,D_2)
	\longmapsto 
	(D_1,tD_2) \in 
	\Dpoly(M)\oplus \image [D,D^{-1}]
\end{equation*}
which is an $L_\infty$-morphism with respect to the product $L_\infty$-structure. 
Setting 
\begin{equation*}
  H(t) \colon
	\Dpoly(M)\oplus \image [D,D^{-1}] \ni 
	(D_1,D_2)
	\longmapsto 
	(0,-D^{-1}D_2) \in 
	\Dpoly(M)\oplus \image [D,D^{-1}]
\end{equation*}
we obtain again 
\begin{align*}
  \frac{\D}{\D t} M(t)
	& =
	\pr_{\image [D,D^{-1}]} 
	=
	0 \oplus (DD^{-1}+D^{-1}D)\\
	& =
	-\del \oplus(\del_M + D)\circ H(t) - 
	H(t) \circ (\del \oplus (\del_M + D))\\
	& =
	\widehat{Q}^1(H(t)\vee \exp(M(t))).
\end{align*} 
Therefore, it follows that 
\begin{equation*}
  L(t)
	=
	L^{-1} \circ M(t) \circ L 
	\colon 
	\Omega(M,\Dpolyres)
	\longrightarrow 
	\Omega(M,\Dpolyres)
\end{equation*}
encodes the homotopy between 
\begin{equation*}
  L(0)
	=
	\tau\circ \nu^{-1} \circ P
	\quad \text{ and } \quad 
	L(1)
	=
	\id.
\end{equation*}
But this implies with Proposition~\ref{prop:CompofHomotopicHomotopic}
\begin{equation*}
  P \circ e^{-[h,\argument]}
	\sim
	P \circ e^{-[h,\argument]} \circ \tau' \circ (\nu')^{-1} \circ P'
	=
	P'
\end{equation*}
and the statement is shown.
\end{proof}

Combining all the above statements we can show that the globalizations 
with respect to different covariant derivatives are homotopy equivalent. 

\begin{theorem}
\label{thm:GlobalwithModifareHomotopic}
  Let $(\nabla,r)$ and $(\nabla',r')$ be two pairs of globalization data.
  Then the formalities constructed via Dolgushev's globalization 
	and the globalized formalities via the $L_\infty$-projection are 
	all are homotopic, i.e. one has
	\begin{equation}
	  \boldsymbol{U}^{(\nabla,r)}
		\sim 
		F^{(\nabla,r)}
		\sim
		F^{(\nabla',r')}
		\sim
		\boldsymbol{U}^{(\nabla',r')} .
	\end{equation}
\end{theorem}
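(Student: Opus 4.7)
The plan is to split the theorem into three separate homotopy equivalences
\begin{equation*}
\boldsymbol{U}^{(\nabla,r)} \sim F^{(\nabla,r)},\qquad F^{(\nabla,r)} \sim F^{(\nabla',r')},\qquad F^{(\nabla',r')} \sim \boldsymbol{U}^{(\nabla',r')},
\end{equation*}
and conclude by transitivity of the homotopy relation on $L_\infty$-morphisms.

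The middle equivalence is the central assertion and is where Proposition~\ref{prop:TwistMorphHomEqu} does the decisive work. I would combine three ingredients already established in this section: first, the corollary following Proposition~\ref{prop:BBprimeEquivalent} yields the homotopy $\mathcal{U}^{B}\sim e^{-[h,\argument]}\circ\mathcal{U}^{B'}\circ e^{[h,\argument]}$, which is Proposition~\ref{prop:TwistMorphHomEqu} applied to the gauge equivalence implemented by $h$; second, the corollary on the Fedosov Taylor series gives the strict identity $e^{[h,\argument]}\circ\tau\circ\nu^{-1}=\tau'\circ(\nu')^{-1}$; third, Lemma~\ref{lemma:ProjHomotopic} supplies $P\circ e^{-[h,\argument]}\sim P'$. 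Propagating these through the pre- and post-compositions appearing in the explicit formula for $F^{(\nabla,r)}$ via Propositions~\ref{prop:CompofHomotopicHomotopic} and~\ref{prop:preCompofHomotopicHomotopic}, I would assemble the chain
\begin{align*}
F^{(\nabla,r)} &= P\circ\mathcal{U}^{B}\circ\tau\circ\nu^{-1}\\
&\sim P\circ e^{-[h,\argument]}\circ\mathcal{U}^{B'}\circ e^{[h,\argument]}\circ\tau\circ\nu^{-1}\\
&= P\circ e^{-[h,\argument]}\circ\mathcal{U}^{B'}\circ\tau'\circ(\nu')^{-1}\\
&\sim P'\circ\mathcal{U}^{B'}\circ\tau'\circ(\nu')^{-1} \;=\; F^{(\nabla',r')}.
\end{align*}

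For the two outer equivalences I would fix a single pair of globalization data and compare Dolgushev's modification $U$ with the $L_\infty$-projection construction. By \cite[Lemma~1]{dolgushev:2007a}, $U$ and $\mathcal{U}^{B}\circ\tau\circ\nu^{-1}$ are homotopic as $L_\infty$-morphisms from $\Tpoly(M)$ into $(\Omega(M,\Dpolyres),\del_M+D)$. Post-composing with the $L_\infty$-projection $P$ and invoking Proposition~\ref{prop:CompofHomotopicHomotopic} yields $F^{(\nabla,r)}=P\circ\mathcal{U}^{B}\circ\tau\circ\nu^{-1}\sim P\circ U$. The image of $U$ lies in $Z^0_D(\Omega(M,\Dpolyres))$, and the remark following Proposition~\ref{prop:InftyProjectionDolg} that the higher Taylor components $P^1_{k+1}$ all factor through $D^{-1}$ and vanish on zero-form inputs should force $P\circ U=\nu\circ\sigma\circ U=\boldsymbol{U}^{(\nabla,r)}$. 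The identical argument produces $F^{(\nabla',r')}\sim\boldsymbol{U}^{(\nabla',r')}$.

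The main obstacle I anticipate lies precisely in this last identification $P\circ U=\nu\circ\sigma\circ U$: one must verify that the image of the modification $U$ really sits in a part of $Z^0_D(\Omega(M,\Dpolyres))$ on which the higher Taylor components of $P$ indeed vanish, which requires a careful bookkeeping of antisymmetric form and fiber degrees inside the recursive construction of $U$ in \cite{dolgushev:2005a}. Should this degree analysis turn out more delicate than indicated above, an alternative route is to invoke the corollary after Lemma~\ref{lemma:HomEquvsQuasiIso} that any two quasi-inverses of a common $L_\infty$-quasi-isomorphism are homotopic, applied to $\tau\circ\nu^{-1}$ together with the fact that both $\boldsymbol{U}^{(\nabla,r)}$ and $F^{(\nabla,r)}$ extend the fiberwise formality $\mathcal{U}^{B}$ by the same first Taylor coefficient, the Hochschild--Kostant--Rosenberg map.
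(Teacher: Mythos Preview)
Your proposal is correct and follows essentially the same route as the paper's proof: the paper writes $\boldsymbol{U}^{(\nabla,r)}=\nu\circ\sigma\circ U=P\circ U\sim P\circ\mathcal{U}^B\circ\tau\circ\nu^{-1}\sim P\circ e^{-[h,\argument]}\circ\mathcal{U}^{B'}\circ\tau'\circ(\nu')^{-1}\sim P'\circ\mathcal{U}^{B'}\circ\tau'\circ(\nu')^{-1}\sim P'\circ U'=\boldsymbol{U}^{(\nabla',r')}$, which is precisely your chain with the three pieces concatenated. Your anticipated obstacle is not one: by construction $U$ lands in $Z^0_D(\Omega(M,\Dpolyres))$ (zero antisymmetric form degree), and the remark after Proposition~\ref{prop:InftyProjectionDolg} together with the first line of the proof of Lemma~\ref{lemma:ProjHomotopic} already record that the higher Taylor components $P^1_{k+1}$ vanish on $\Omega^0(M,\Dpolyres)$, so $P\circ U=\nu\circ\sigma\circ U$ holds on the nose.
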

\begin{proof}
By Proposition~\ref{prop:PropertiesofHomotopicMorphisms} we already know 
that compositions of homotopic $L_\infty$-morphisms with DGLA morphisms are 
homotopic, which yields
\begin{align*}
  U
	\sim
	\mathcal{U}^{B} \circ \tau \circ \nu^{-1}
	\sim
	e^{-[h,\argument]}\circ \mathcal{U}^{B'} \circ e^{[h,\argument]}
	\circ\tau \circ \nu^{-1} 
	=
	e^{-[h,\argument]}\circ \mathcal{U}^{B'} \circ \tau' \circ (\nu')^{-1} 
	\sim
	e^{-[h,\argument]}\circ U'.
\end{align*}
It follows with Lemma~\ref{lemma:ProjHomotopic}, Proposition~\ref{prop:CompofHomotopicHomotopic} and 
Proposition~\ref{prop:preCompofHomotopicHomotopic}
\begin{align*}
  \boldsymbol{U}^{(\nabla,r)}
	=
	\nu\circ \sigma \circ U
	& =
	P \circ U
	\sim
  P \circ \mathcal{U}^B \circ\tau \circ \nu^{-1}
	\sim
	P \circ e^{-[h,\argument]}\circ \mathcal{U}^{B'} \circ \tau' \circ (\nu')^{-1}\\
	&\sim 
	P' \circ \mathcal{U}^{B'} \circ \tau' \circ (\nu')^{-1}
	\sim 
	P' \circ U'
	=
	\nu'\circ \sigma \circ U'
	=
	\boldsymbol{U}^{(\nabla',r')}
\end{align*}
and the theorem is shown.
\end{proof}

\begin{corollary}
Let $M$ be a smooth manifold and let $(\nabla,r)$ 
be a globalization data. For every coordinate patch 
$(U,x)$ 
	\begin{align*}
	F^{(\nabla,r)}\at{U}
	\sim
	K\at{U},
	\end{align*}
	holds, 
where $K$ denotes the Kontsevich formality on 
$\mathbb{R}^d$, and 
where $d$ is the dimension of $M$. 
\end{corollary}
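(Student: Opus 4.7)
My plan is to reduce, via Theorem \ref{thm:GlobalwithModifareHomotopic}, to a particularly convenient choice of globalization data on the coordinate patch, and then to compute the resulting Dolgushev formality explicitly, exhibiting that it coincides up to homotopy with Kontsevich's formality on $\mathbb{R}^d$.

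Concretely, on the chart $(U,x)$ I would take the flat connection $\nabla_0$ whose Christoffel symbols vanish in the coordinates $x$, together with $r = 0$. By Theorem \ref{thm:GlobalwithModifareHomotopic} we have $F^{(\nabla,r)}\at{U} \sim F^{(\nabla_0, 0)}$, so it suffices to prove $F^{(\nabla_0, 0)} \sim K\at{U}$. For this data the curvature $R$ vanishes, so the Fedosov equation $\delta A = R + \nabla_0 A + \frac{1}{2}[A,A]$ with $\delta^{-1}A = 0$ and $\sigma(A) = 0$ forces the unique solution $A = 0$. Hence $B = -\delta$ and the Fedosov Taylor series collapses to the honest Taylor expansion $\tau(v)(x,y) = \sum_\alpha \frac{1}{\alpha!}\, y^\alpha \partial_x^\alpha v(x)$; in particular $\tau \circ \nu^{-1}$ identifies $\Tpoly(U)$ with the horizontal sections of $\Omega(U,\Tpolyres)$.

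I would then compute $\mathcal{U}^B \circ \tau \circ \nu^{-1}$ on polyvector fields $v_1,\ldots, v_n \in \Tpoly(U)$. Kontsevich's graph formula for the fiberwise $\mathcal{U}$ involves only $\partial/\partial y^i$-derivatives, and Taylor's theorem gives $\partial_y^\alpha \tau(v)\at{y=0} = \partial_x^\alpha v$, whence $\sigma \circ \mathcal{U}(\tau v_1, \ldots, \tau v_n) = K_n(v_1, \ldots, v_n)$. Among the extra terms produced by the twist
\[
\mathcal{U}^B(\tau v_1, \ldots, \tau v_n) = \sum_{k\geq 0} \frac{1}{k!}\,\mathcal{U}_{n+k}(B,\ldots,B,\tau v_1,\ldots,\tau v_n),
\]
only those with no $B$-insertion carry zero form degree, since each $B = -\D x^i \partial/\partial y^i$ carries one factor of $\D x^i$. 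Because $P^1_1 = \nu \circ \sigma$ and, as noted just after Proposition~\ref{prop:InftyProjectionDolg}, the higher structure maps of $P$ vanish on $\Omega^0(M,\Dpolyres)$, the zero-form contribution is preserved by $P$ and reproduces $K$, while the form-positive corrections are precisely what the homotopies $D^{-1}$ appearing in the higher structure maps of $P$ are designed to kill.

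The main obstacle I expect is the clean bookkeeping of the $B$-twist: one must track how the insertions of $B = -\delta$ interact with both Kontsevich's graphs and the recursive homotopy $D^{-1}$ in the definition of $P$, in order either to verify that they produce no net contribution at the level of structure maps or to exhibit an explicit $L_\infty$-homotopy in the convolution DGLA absorbing them. Here one can alternatively invoke Proposition~\ref{prop:TwistMorphHomEqu} or the splitting lemma (Lemma~\ref{lemma:Dpolysplits}) to encode this extra homotopy cleanly. Once this is settled, the composition $P \circ \mathcal{U}^B \circ \tau \circ \nu^{-1}$ is identified up to homotopy with $K$, and together with the first reduction this yields $F^{(\nabla,r)}\at{U} \sim K\at{U}$.
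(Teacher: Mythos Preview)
Your reduction via Theorem~\ref{thm:GlobalwithModifareHomotopic} to the flat data $(\nabla_0,0)$ on the chart is exactly the paper's argument, and your identification of $A=0$, $B=-\delta$, and $\tau$ as the honest Taylor expansion is correct. The paper then simply asserts that the Dolgushev formality for this flat data \emph{coincides} with the Kontsevich formality, so the whole corollary is two lines.

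The ``main obstacle'' you anticipate is not actually there. You are worried about tracking insertions of $B=-\delta=-\D x^i\,\partial/\partial y^i$ in the twist $\mathcal{U}^B$, but recall the remark in the preliminaries that, by the properties of the Kontsevich formality, the first two summands of $B$ do not contribute, i.e.\ $\mathcal{U}^B=\mathcal{U}^A$. Since $A=0$ in the flat case, $\mathcal{U}^B=\mathcal{U}$ is the \emph{untwisted} fiberwise formality; in particular $\mathcal{U}^B\circ\tau\circ\nu^{-1}$ lands entirely in form degree zero. Hence the higher structure maps of $P$ never enter, $P$ acts as $\nu\circ\sigma$, and your computation $\sigma\circ\mathcal{U}(\tau v_1,\dots,\tau v_n)=K_n(v_1,\dots,v_n)$ already gives $F^{(\nabla_0,0)}=K$ on the nose. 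No further homotopy, no appeal to Proposition~\ref{prop:TwistMorphHomEqu} or Lemma~\ref{lemma:Dpolysplits}, is needed at this stage.
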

\begin{proof}
The formalities themselves are differential operators and
can therefore be restricted to open neighbourhoods. Moreover, 
the Kontsevich formality coincides with the Dolgushev formality on 
$\mathbb{R}^d$ for the choice of the canonical flat covariant derivative
and $r=0$. 
\end{proof}
	
This allows us to recover 
\cite[Theorem~2.6]{bursztyn.dolgushev.waldmann:2012a}, i.e. that 
the induced maps on equivalence classes of Maurer-Cartan elements 
are independent of the choice of the covariant derivative. It 
implies in particular that globalizations with respect to different covariant 
derivatives lead to equivalent star products.

\begin{corollary}
  \label{cor:StarProdIndependentOfConnection}
  The induced map $	\Def(\Tpoly(M)[[\hbar]]) 	\longrightarrow	
	\Def(\Dpoly(M)[[\hbar]])$  
	does not depend on the choice of a covariant derivative.
\end{corollary}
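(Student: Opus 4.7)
The plan is to deduce this corollary as a direct consequence of Theorem~\ref{thm:GlobalwithModifareHomotopic} combined with the general properties of homotopic $L_\infty$-morphisms collected in Proposition~\ref{prop:PropertiesofHomotopicMorphisms}. The first step is simply to recall that Theorem~\ref{thm:GlobalwithModifareHomotopic} yields, for any two pairs of globalization data $(\nabla, r)$ and $(\nabla', r')$, the homotopy
\begin{equation*}
  \boldsymbol{U}^{(\nabla,r)} \sim \boldsymbol{U}^{(\nabla',r')}
\end{equation*}
as $L_\infty$-morphisms from $\Tpoly(M)[[\hbar]]$ to $\Dpoly(M)[[\hbar]]$, where the filtration on both DGLAs is the $\hbar$-adic one, which is complete, descending and exhaustive.

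The second step is to invoke Proposition~\ref{prop:PropertiesofHomotopicMorphisms}~(iii), which states that homotopic $L_\infty$-morphisms between filtered DGLAs induce the same map on equivalence classes of Maurer-Cartan elements, i.e.\ on the sets $\Def(\argument)$. Applying this to the two formalities produces the equality of induced maps
\begin{equation*}
  \boldsymbol{U}^{(\nabla,r)}_* = \boldsymbol{U}^{(\nabla',r')}_* \colon \Def(\Tpoly(M)[[\hbar]]) \longrightarrow \Def(\Dpoly(M)[[\hbar]]),
\end{equation*}
which is the claim.

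There is essentially no obstacle here since the heavy lifting has already been done in Theorem~\ref{thm:GlobalwithModifareHomotopic}; the only thing to verify in passing is that the filtrations involved satisfy the hypotheses of Proposition~\ref{prop:PropertiesofHomotopicMorphisms}~(iii), which is immediate for the $\hbar$-adic filtration on formal power series DGLAs. Thus the proof is a one-line application of the two previous results.
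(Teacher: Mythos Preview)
Your proposal is correct and follows exactly the paper's own argument: the paper's proof reads ``The statement follows directly from Theorem~\ref{thm:GlobalwithModifareHomotopic} and Proposition~\ref{prop:PropertiesofHomotopicMorphisms}.'' Your additional remark about the $\hbar$-adic filtration satisfying the hypotheses is a reasonable clarification but not strictly needed.
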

\begin{proof}
The statement follows directly from Theorem~\ref{thm:GlobalwithModifareHomotopic} 
and Proposition~\ref{prop:PropertiesofHomotopicMorphisms}.
\end{proof}

Finally, note that Theorem~\ref{thm:GlobalwithModifareHomotopic} also 
holds in the equivariant setting of an action of a 
Lie group $\group{G}$ on $M$ with $\group{G}$-invariant torsion-free 
covariant derivatives $\nabla$ and $\nabla'$.

\begin{proposition}
  Let $\group{G}$ act on $M$ and consider two pairs of globalization data 
	$(\nabla,r)$ and $(\nabla',r')$, where $\nabla$ and $\nabla'$ are two 
	$\group{G}$-invariant torsion-free covariant derivatives and where 
	$r$ and $r'$ are $\group{G}$-invariant. Then 
	the formalities are equivariant and equivariantly homotopic
	\begin{equation}
	  \label{eq:EquivariantFormalitiesHom}
	  \boldsymbol{U}^{(\nabla,r)}
		\sim_{\group{G}} 
		F^{(\nabla,r)}
		\sim_{\group{G}}
		F^{(\nabla',r')}
		\sim_{\group{G}}
		\boldsymbol{U}^{(\nabla',r')} ,
	\end{equation}
	i.e. all paths encoding the equivalence relation from 
	\eqref{eq:EquivMCElements} are $\group{G}$-equivariant.
\end{proposition}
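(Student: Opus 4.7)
The plan is to show that every object and every homotopy appearing in the proof of Theorem~\ref{thm:GlobalwithModifareHomotopic} is $\group{G}$-equivariant as soon as the globalization data are $\group{G}$-invariant. First, I would verify that the Fedosov data themselves are equivariant: since $\nabla$ is $\group{G}$-invariant and $r$ is $\group{G}$-invariant, the defining equations for $A$ (and for $\widetilde{A}$) are $\group{G}$-equivariant; by the uniqueness part of Fedosov's iteration the solutions $A$ and $\widetilde{A}$ are $\group{G}$-invariant, and hence so are $B$ and $B'$. The operators $\delta$, $\delta^{-1}$, $\nabla$, $D$, $D'$ and the Hochschild differential $\del_M$ all commute with the $\group{G}$-action, so the Fedosov Taylor series $\tau$, $\tau'$ and the maps $\nu$, $\sigma$ are $\group{G}$-equivariant. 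Consequently the homotopy $D^{-1}$ from \eqref{eq:HomotopyforFedosovDiff} is $\group{G}$-equivariant, and the recursion \eqref{eq:defPinfty} forces every Taylor component of $P$ and $P'$ to be $\group{G}$-equivariant as well.

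Second, I would analyze the element $h$ from Proposition~\ref{prop:BBprimeEquivalent}. Following the construction in \cite[Appendix~C]{bursztyn.dolgushev.waldmann:2012a}, $h$ is obtained by a Fedosov-type iteration built out of $\delta^{-1}$, $\nabla$ and the difference $\widetilde{A}-A$, all of which are $\group{G}$-invariant. Uniqueness of the iteration then yields $h\in\mathcal{F}^1\Omega^0(M,\Tpolyres^0)^{\group{G}}$. In particular the path
\begin{equation*}
  B(t)=-\frac{\exp([th,\argument])-\id}{[th,\argument]}D(th)
\end{equation*}
together with the constant gauge parameter $\lambda(t)=h$ are both $\group{G}$-invariant, so the equivalence $B\sim B'$ from Proposition~\ref{prop:BBprimeEquivalent} is $\group{G}$-equivariant.

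Third, I would track this invariance through the proof of Proposition~\ref{prop:TwistMorphHomEqu}. Its path $F(t)=e^{[-A'(t),\argument]}\circ F^{\pi(t)}\circ e^{[A(t),\argument]}$ is built entirely from $\group{G}$-equivariant ingredients (the Kontsevich fiberwise formality $\mathcal{U}$ is natural and hence commutes with the $\group{G}$-action, and $A(t),A'(t),\pi(t)$ inherit invariance from the data above via the uniqueness statement of Proposition~\ref{prop:PioftUnique}); thus the induced gauge parameter in the convolution DGLA is $\group{G}$-invariant, proving $\mathcal{U}^B\sim_{\group{G}}e^{-[h,\argument]}\circ\mathcal{U}^{B'}\circ e^{[h,\argument]}$. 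The same reasoning applied to the proof of Lemma~\ref{lemma:ProjHomotopic} (whose ingredients are $P$, $P'$, $L$, $M(t)$ and $H(t)$, all manifestly $\group{G}$-equivariant once $D^{-1}$ and $\nu\circ\sigma$ are) gives $P'\sim_{\group{G}} P\circ e^{-[h,\argument]}$. Finally, concatenating the homotopies exactly as in the proof of Theorem~\ref{thm:GlobalwithModifareHomotopic}, and using the equivariant versions of Propositions~\ref{prop:CompofHomotopicHomotopic} and \ref{prop:preCompofHomotopicHomotopic} (which hold verbatim since composition of equivariant maps is equivariant), yields the chain \eqref{eq:EquivariantFormalitiesHom}.

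The main obstacle I anticipate is not conceptual but bookkeeping: one must confirm that in \emph{every} iteration or fixed-point argument (Fedosov's recursion for $A$, $\widetilde{A}$ and $h$; the construction of the $L_\infty$-projection $P$; the ODE of Proposition~\ref{prop:PioftUnique}) uniqueness is strong enough to pass $\group{G}$-invariance from the data to the solution. Each of these uses the contracting homotopy $\delta^{-1}$ (or the filtration-lowering operator $D^{-1}$), which commutes with the $\group{G}$-action, so the invariant subspace is preserved at every step and the unique solutions lie in it. Once this is established, no additional argument is needed and the equivariant statement follows from exactly the same diagram chase as in the non-equivariant case.
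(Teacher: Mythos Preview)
Your proposal is correct and follows essentially the same strategy as the paper: verify that every ingredient in the proof of Theorem~\ref{thm:GlobalwithModifareHomotopic} is $\group{G}$-equivariant, so that the explicit homotopies of Proposition~\ref{prop:TwistMorphHomEqu} and Lemma~\ref{lemma:ProjHomotopic} are automatically equivariant. The paper's proof is terser, outsourcing the equivariance of $A$, $\tau$, $\nu$, $h$, $U$, etc.\ to \cite[Theorem~5]{dolgushev:2005a} rather than rederiving it via uniqueness of the Fedosov iterations as you do, but the logical content is identical. One small point you pass over implicitly: the first link $\boldsymbol{U}^{(\nabla,r)}\sim_{\group{G}} F^{(\nabla,r)}$ requires that the homotopy $U\sim\mathcal{U}^B\circ\tau\circ\nu^{-1}$ from \cite[Lemma~1]{dolgushev:2007a} is itself equivariant; the paper invokes \cite[Theorem~5]{dolgushev:2005a} for this, and you should make sure your ``concatenating exactly as in the proof'' covers it.
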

\begin{proof}
The formalities are equivariant since all involved maps are 
\cite[Theorem~5]{dolgushev:2005a}. Moreover, 
$\mathcal{U}^B$ and $e^{-[h,\argument]}\circ 
\mathcal{U}^{B'}\circ e^{[h,\argument]}$ are equivariantly homotopic by 
the explicit form of the homotopy from 
Proposition~\ref{prop:TwistMorphHomEqu}. Moreover, again by 
\cite[Theorem~5]{dolgushev:2005a} we know that $U$ and 
$\mathcal{U}^B\circ \tau\circ \nu^{-1}$ are equivariantly homotopic, the 
same holds for the $(\nabla',r')$ case. Thus by 
Theorem~\ref{thm:GlobalwithModifareHomotopic} it only remains to show that 
$P\circ e^{-[h,\argument]}$ and $P'$ are equivariantly homotopic. But 
this follows directly from Lemma~\ref{lemma:ProjHomotopic} since all involved 
maps are equivariant. 
\end{proof}

In the case of proper Lie group actions one knows that invariant covariant 
derivatives always exist and one has even an invariant 
Hochschild-Kostant-Rosenberg theorem, compare 
\cite[Theorem~5.11]{miaskiwskyi:2018a:pre}. Thus the $L_\infty$-morphisms 
from \eqref{eq:EquivariantFormalitiesHom} restrict to the 
invariant DGLAs and one obtains homotopic formalities 
from $(\Tpoly(M))^\group{G}$ to $(\Dpoly(M))^\group{G}$.

\section{Final Remarks}

In \cite[Thm.~6]{dolgushev:2005b} it is proven that the construction 
of the formality $ \boldsymbol{U}^{(\nabla,0)}$ is functorial for 
diffeomorphisms of pairs $(M,\nabla)$. 
More explicitly, the objects of the source category 
are pairs $(M,\nabla)$ of manifolds with 
torsion-free covariant derivatives, and a morphism from 
$(M,\nabla)$ to $(M',\nabla')$ is a diffeomorphism
$\phi\colon M\to M'$ such that 
	\begin{align*}
	\phi_*(\nabla_X Y)=\nabla'_{\phi_* X}\phi_* Y
	\end{align*}
for all $X,Y\in \Secinfty(TM)$. The target category is the category of triples 
$(A,B,F)$, where $A,B$ are $L_\infty$-algebras and where $F\colon A\to B$ 
is an $L_\infty$-quasi-isomorphism. A morphism is a pair 
$(f,g)\colon (A,B,F)\to (A',B',F')$ consisting of two $L_\infty$-morphisms 
$f\colon A\to A'$ and $g\colon B\to B'$ such that 
	\begin{center}
	\begin{tikzcd}
	A\arrow[r, "F"]\arrow[d,swap, "f"] & B\arrow[d, "g"]\\
	A'\arrow[r, "F'"']  & B'
	\end{tikzcd}
	\end{center}
commutes. The functor from \cite{dolgushev:2005b} is hence given by 
	\begin{align*}
	(M,\nabla)\longrightarrow (T_\poly(M), D_\poly(M),\boldsymbol{U}^{(\nabla,0)}),
	\end{align*}	 
mapping diffeomorphisms to push-forwards of vectorfields resp. differential 
operators. Our investigations from above 
lead now to a functor from the category of manifolds with diffeomorphisms into 
a category as above but with morphisms being homotopy classes of 
$L_\infty$-quasi-isomorphims and $L_\infty$-morphisms, respectively. It is
 given by 
	\begin{align*}
	M\longrightarrow ((T_\poly(M), D_\poly(M),[\boldsymbol{U}^{(\nabla,0)}]),
	\end{align*}	   
where $[\argument]$ indicates the homotopy class and where $\nabla$ is an 
arbitrary torsion-free connection. 

Moreover, for a Lie group $G$, we can consider the source category of 
$G$-manifolds with proper $G$-actions and with 
equivariant diffeomorphisms to get the functor
	\begin{align*}
	M\to (T_\poly(M)^G,D_\poly(M)^G, [\boldsymbol{U}^{(\nabla,0)}]).
	\end{align*}	
Here $\nabla$ is an arbitrary $G$-invariant torsion-free connection.

%
%
\bibliographystyle{nchairx}

\begin{thebibliography}{10}

\bibitem {bayen.et.al:1978a}
\chairxauthorbibfont{Bayen, F., Flato, M., Fr{{\o}}nsdal, C., Lichnerowicz, A.,
  Sternheimer, D.:}\newblock \chairxtitlebibfont{Deformation Theory and
  Quantization}.
\newblock Ann. Phys.  \textbf{111} (1978), 61--151.
\par\csname bayen.et.al:1978achairxnote\endcsname

\bibitem {bursztyn.dolgushev.waldmann:2012a}
\chairxauthorbibfont{Burzstyn, H., Dolgushev, V., Waldmann, S.:}\newblock
  \chairxtitlebibfont{Morita equivalence and characteristic classes of star
  products}.
\newblock Crelle's J. reine angew. Math.  \textbf{662} (2012), 95--163.
\par\csname bursztyn.dolgushev.waldmann:2012achairxnote\endcsname

\bibitem {calaque:2005a}
\chairxauthorbibfont{Calaque, D.:}\newblock \chairxtitlebibfont{Formality for
  Lie Algebroids}.
\newblock Commun. Math. Phys.  \textbf{257} (2005), 563--578.
\par\csname calaque:2005achairxnote\endcsname

\bibitem {canonaco:1999a}
\chairxauthorbibfont{Canonaco, A.:}\newblock
  \chairxtitlebibfont{$L_\infty$-algebras and quasi-isomorphisms}.
\newblock In: \chairxtitlebibfont{Seminari di Geometria Algebrica 1998-1999},
  67--86. Scuola Normale Superiore, Pisa, 1999.
\par\csname canonaco:1999achairxnote\endcsname

\bibitem {dolgushev:2006a}
\chairxauthorbibfont{Dolgushev, V.:}\newblock \chairxtitlebibfont{A formality
  theorem for {H}ochschild chains}.
\newblock Adv. Math.  \textbf{200} (2006), 51--101.
\par\csname dolgushev:2006achairxnote\endcsname

\bibitem {dolgushev:2005a}
\chairxauthorbibfont{Dolgushev, V.~A.:}\newblock \chairxtitlebibfont{Covariant
  and equivariant formality theorems}.
\newblock Adv. Math.  \textbf{191} (2005), 147--177.
\par\csname dolgushev:2005achairxnote\endcsname

\bibitem {dolgushev:2005b}
\chairxauthorbibfont{Dolgushev, V.~A.:}\newblock \chairxtitlebibfont{A Proof of
  {T}sygan's Formality Conjecture for an Arbitrary Smooth Manifold}.
\newblock PhD thesis, Massachusetts Institute of Technology, Cambridge, MA,
  2005.
\newblock math.QA/0504420.
\par\csname dolgushev:2005bchairxnote\endcsname

\bibitem {dolgushev:2007a}
\chairxauthorbibfont{Dolgushev, V.~A.:}\newblock \chairxtitlebibfont{Erratum
  to: "A Proof of Tsygan's Formality Conjecture for an Arbitrary Smooth
  Manifold"}.
\newblock PhD thesis, 2007.
\newblock arXiv:math/0703113.
\par\csname dolgushev:2007achairxnote\endcsname

\bibitem {dotsenko.poncin:2016a}
\chairxauthorbibfont{Dotsenko, V., Poncin, N.:}\newblock \chairxtitlebibfont{A
  tale of three homotopies}.
\newblock Applied Categorical Structures  \textbf{24 (6)} (2016), 845--873.
\par\csname dotsenko.poncin:2016achairxnote\endcsname

\bibitem {dotsenko.shadrin.vallette:2018}
\chairxauthorbibfont{Dotsenko, V., Shadrin, S., Vallette, B.:}\newblock
  \chairxtitlebibfont{The twisting procedure}.
\newblock Preprint  \textbf{arXiv:1810.02941} (2018), 93 pages.
\par\csname dotsenko.shadrin.vallette:2018chairxnote\endcsname

\bibitem {esposito.dekleijn:2018a:pre}
\chairxauthorbibfont{Esposito, C., de~Kleijn, N.:}\newblock
  \chairxtitlebibfont{$L_\infty$-resolutions and twisting in the curved
  context}.
\newblock Preprint  \textbf{arXiv:1801.08472} (2018), 16 pages.
\par\csname esposito.dekleijn:2018a:prechairxnote\endcsname

\bibitem {esposito.kraft.schnitzer:2020a:pre}
\chairxauthorbibfont{Esposito, C., Kraft, A., Schnitzer, J.:}\newblock
  \chairxtitlebibfont{The Strong Homotopy Structure of Poisson Reduction}.
\newblock Preprint  \textbf{arXiv:2004.10662} (2020), 30 pages.
\par\csname esposito.kraft.schnitzer:2020a:prechairxnote\endcsname

\bibitem {getzler:2009a}
\chairxauthorbibfont{Getzler, E.:}\newblock \chairxtitlebibfont{Lie theory for
  nilpotent $L_\infty$-algebras}.
\newblock Ann. of Math.  \textbf{170}.1 (2009), 271--301.
\par\csname getzler:2009achairxnote\endcsname

\bibitem {goldman.millson:1988a}
\chairxauthorbibfont{Goldman, W.~M., Millson, J.~J.:}\newblock
  \chairxtitlebibfont{The deformation theory of representations of fundamental
  groups of compact {K}\"ahler manifolds}.
\newblock Inst. Hautes \'Etudes Sci. Publ. Math.  \textbf{67} (1988), 43--96.
\par\csname goldman.millson:1988achairxnote\endcsname

\bibitem {kontsevich:2003a}
\chairxauthorbibfont{Kontsevich, M.:}\newblock \chairxtitlebibfont{Deformation
  Quantization of {P}oisson manifolds}.
\newblock Lett. Math. Phys.  \textbf{66} (2003), 157--216.
\par\csname kontsevich:2003achairxnote\endcsname

\bibitem {liao.stienon.xu:2018a}
\chairxauthorbibfont{Liao, H.-Y., Stiénon, M., Xu, P.:}\newblock
  \chairxtitlebibfont{Formality theorem for differential graded manifolds}.
\newblock Comptes Rendus Mathematique  \textbf{356}.1 (2018), 27-- 43.
\par\csname liao.stienon.xu:2018achairxnote\endcsname

\bibitem {loday.vallette:2012a}
\chairxauthorbibfont{Loday, J.-L., Vallette, B.:}\newblock
  \chairxtitlebibfont{Algebraic Operads}, vol. 346 in
  \chairxseriesbibfont{Grundlehren der mathematischen Wissenschaften}.
\newblock Springer-Verlag, Berlin, Heidelberg, New York, 2012.
\par\csname loday.vallette:2012achairxnote\endcsname

\bibitem {manetti:2005a}
\chairxauthorbibfont{Manetti, M.:}\newblock \chairxtitlebibfont{Deformation
  theory via differential graded Lie algebras}, 2005.
\par\csname manetti:2005achairxnote\endcsname

\bibitem {miaskiwskyi:2018a:pre}
\chairxauthorbibfont{Miaskiwskyi, L.:}\newblock \chairxtitlebibfont{Invariant
  Hochschild cohomology of smooth functions}.
\newblock Preprint  \textbf{arXiv:1808.08096} (2018), 16 pages.
\par\csname miaskiwskyi:2018a:prechairxnote\endcsname

\bibitem {waldmann:2007a}
\chairxauthorbibfont{Waldmann, S.:}\newblock
  \chairxtitlebibfont{Poisson-{G}eometrie und {D}eformationsquantisierung.
  {E}ine {E}inf{\"u}hrung}.
\newblock Springer-Verlag, Heidelberg, Berlin, New York, 2007.
\par\csname waldmann:2007achairxnote\endcsname

\end{thebibliography}

\end{document}